	\newcolumntype{C}[1]{>{\centering}m{#1}}
\newcommand{\EE}{\mathbb{E}}
\newcommand{\NN}{\mathbb{N}}
\newcommand{\PP}{\mathbb{P}}
\newcommand{\RR}{\mathbb{R}}
\newcommand{\One}{\mathbbm{1}}
\newcommand{\one}{\mathbf{1}}
\newcommand{\Dd}{\mathcal{D}}
\newcommand{\Ee}{\mathcal{E}}
\newcommand{\Ff}{\mathcal{F}}
\newcommand{\Hh}{\mathcal{H}}
\newcommand{\Ll}{\mathcal{L}}
\newcommand{\Nn}{\mathcal{N}}
\newcommand{\vx}{\mathbf{x}}
\newcommand{\vB}{\mathbf{B}}
\newcommand{\vX}{\mathbf{X}}
\newcommand{\vY}{\mathbf{Y}}
\renewcommand{\Pr}{\mathbb{P}} % probability
\def\EE{\mathbb{E}} % expectation 
\newcommand{\EEE}[1]{\underset{#1}{\EE}}
\newcommand{\Prr}[1]{\underset{#1}{\Pr}}
\def\[#1\]{\begin{equation}\begin{aligned}#1\end{aligned}\end{equation}}
\def\*[#1\]{\begin{align*}#1\end{align*}}
\newcommand{\st}{\ \text{s.t.}\ }
\newcommand{\andT}{\ \text{and}\ }
\DeclareMathOperator*{\Cov}{Cov}
\DeclareMathOperator*{\Corr}{Corr}
\DeclareMathOperator*{\Var}{Var}
\newcommand{\VVar}[1]{\underset{#1}{\Var}}
\DeclareMathOperator*{\argmax}{\arg\max}
\DeclareMathOperator*{\esssup}{\text{ess}\sup}
\DeclareMathOperator*{\essinf}{\text{ess}\inf}
\newcommand{\lcr}[3]{\left #1 #2 \right #3}
\newcommand{\lcrx}[4][{-1}]{
	\IfEq{#1}{-1}{\left #2 {{{{#3}}}} \right #4}{
   	\IfEq{#1}{0}{#1 {{{{#2}}}} #3}{
	\IfEq{#1}{1}{\bigl #2 {{{{#3}}}} \bigr #4}{
	\IfEq{#1}{2}{\Bigl #2 {{{{#3}}}} \Bigr #4}{
	\IfEq{#1}{3}{\biggl #2 {{{{#3}}}} \biggr #4}{
	\IfEq{#1}{4}{\Biggl #2 {{{{#3}}}} \Biggr #4}{
    \GenericWarning{"4th argument to lcrx must be -1, 0, 1, 2, 3, or 4"}
    }}}}}}}
\newcommand{\abs}[2][{-1}]{\lcrx[#1] \vert {#2} \vert }
\newcommand{\set}[2][{-1}]{\lcrx[#1] \{ {#2} \}}
\newcommand{\ceil}[2][{-1}]{\lcrx[#1] \lceil {#2} \rceil}
\newcommand{\norm}[2][{-1}]{\lcrx[#1] \Vert {#2} \Vert}
\newcommand{\inner}[3][{-1}]{\lcrx[#1] \langle {{#2},\ {#3}} \rangle}
\newcommand{\myeq}[1]{\ensuremath{\stackrel{\text{#1}}{=}}}
\newcommand{\myeqm}[1]{\ensuremath{\stackrel{#1}{=}}}
\newcommand{\myram}[1]{\ensuremath{\stackrel{#1}{\to}}}
\newcommand{\mysim}[1]{\ensuremath{\stackrel{\text{#1}}{\sim}}}
\newcommand{\ol}{\overline}
\newcommand{\ul}{\underline}
\newcommand{\mb}{\mathbf}
\newcommandx{\dd}[4][1, 2=\mathrm{d}, 4]{\ensuremath{\frac{#2^{#4}#1}{#2#3^{#4}}}}
\newcommandx{\pp}[4][1, 2=\partial, 4]{\ensuremath{\frac{#2^{#4}#1}{#2#3^{#4}}}}
\DeclareMathOperator*{\newlim}{\mathrm{lim}\vphantom{\mathrm{infsup}}}
\DeclareMathOperator*{\newmin}{\mathrm{min}\vphantom{\mathrm{infsup}}}
\DeclareMathOperator*{\newmax}{\mathrm{max}\vphantom{\mathrm{infsup}}}
\DeclareMathOperator*{\newinf}{\mathrm{inf}\vphantom{\mathrm{infsup}}}
\DeclareMathOperator*{\newsup}{\mathrm{sup}\vphantom{\mathrm{infsup}}}
\renewcommand{\lim}{\newlim}
\renewcommand{\min}{\newmin}
\renewcommand{\max}{\newmax}
\renewcommand{\inf}{\newinf}
\renewcommand{\sup}{\newsup}
\newcommand{\grad}{\nabla}
\newcommand{\laplace}{\Delta}
\newtheorem{theorem}{Theorem}
\newtheorem{proposition}[theorem]{Proposition}
\newtheorem{corollary}[theorem]{Corollary}
\newtheorem{lemma}[theorem]{Lemma}
\newtheorem{definition}[theorem]{Definition}
\numberwithin{theorem}{section}
\newtheorem*{assumption*}{Assumption}
\theoremstyle{remark}
\newtheorem{remark}[theorem]{Remark}
\def\multiset#1#2{\ensuremath{\left(\kern-.3em\left(\genfrac{}{}{0pt}{}{#1}{#2}\right)\kern-.3em\right)}}
\newcommand{\tr}{\mathrm{Tr}}
\newcommand{\vol}{\mathrm{Vol}}
\newcommand{\given}{\vert}
\DeclareMathOperator*{\divergence}{div}
\DeclareMathOperator{\supp}{Support}
\newcommand{\poisson}{\mathrm{Poisson}}
\newcommand{\subalign}[1]{%
  \vcenter{%
    \Let@ \restore@math@cr \default@tag
    \baselineskip\fontdimen10 \scriptfont\tw@
    \advance\baselineskip\fontdimen12 \scriptfont\tw@
    \lineskip\thr@@\fontdimen8 \scriptfont\thr@@
    \lineskiplimit\lineskip
    \ialign{\hfil$\m@th\scriptstyle##$&$\m@th\scriptstyle{}##$\crcr
      #1\crcr
    }%
  }
}
\newenvironment{myproof}[1][\proofname]{%
  \begin{proof}[#1]$ $\par\nobreak\ignorespaces
}{%
  \end{proof}
}
\newcommand{\distto}{\rightsquigarrow}
\newcommand{\disttosk}{\Rightarrow}
\DeclareMathOperator{\spec}{\sigma}
\DeclareMathOperator{\diag}{diag}
\newcommand{\NNO}{{\NN\cup\set{0}}}
\title{Optimal Scaling and Shaping of Random Walk Metropolis via Diffusion Limits of Block-I.I.D. Targets }
\author{Jeffrey Negrea \footnote{University of Toronto -- \href{mailto:negrea@stat.toronto.edu}{negrea@stat.toronto.edu}-- Supported by a Vanier Canada Graduate Scholarship}}
\begin{document}

\maketitle
\abstract{This work extends \citep{roberts1997weak} by consid\-ering limits of Random Walk Metropolis (RWM) applied to block IID target distributions, with corresponding block-independent propos\-als. The extension verifies the robustness of the optimal scaling heuristic, to tune the acceptance rate to $\approx0.234$, for any choice of proposal shaping. We upgrade the form of weak convergence from a finite-dimensional subprocess to the infinite dimensional process. We show that the optimal shaping (in terms of the decay of autocorrelations of linear functions) is the variance of the target distribution. We show that this choice coincides with the optimal shaping in terms of spectral gaps in special cases where they can be computed. Lastly, we provide some negative guarantees, showing that RWM perf\-ormance degrades with higher-order dependence. In such cases, no tuning of RWM will yield performance comparable to an IID target.

}
\section{Introduction}
%!TEX root = ../blockiid_main.tex
\setlength{\parindent}{0pt}
\setlength{\parskip}{6pt}

\subsection{The optimal scaling problem \& diffusion limits}
Markov Chain Monte Carlo (MCMC) algorithms are a common tool for estimating expectations with respect to a arbitrary ``target'' probability measures. These methods operate by defining a Markov Chain whose stationary distribution is the target, and whose dynamics are easily computable. Running this Markov chain forwards in time yields a dependent sequence of samples which can be used to estimate expectations. Performance of such algorithms are typically measured based on how quickly empirical expectations will converge to their target values. Among the simplest of such algorithms is Random-Walk Metropolis (RWM), which proposes IID increments (from a ``proposal distribution'') which are either accepted or rejected with probabilities tuned to match the target stationary distribution. Proposals which land in areas with low target density are likely to be rejected, while those that land in areas with higher density are likely to be accepted. 

The choice of proposal distribution is they key tuning parameter in the design and application of RWM algorithms and has a decisive impact on the performance of the algorithm, especially in a high dimensional setting. A typical choice is to use a mean-zero Gaussian proposal, yet among this class on is still required to select the variance-covariance matrix of the proposal. Proposing steps which are too large in any particular direction will lead to poor performance via frequent rejection, as the proposed point will typically have low target density. Proposing steps which are too small in any particular direction will lead to poor performance, since it will take many steps to move a meaningful distance in that direction. A step size and orientation distribution which is ``just right" (not too big, and also not too small) is required for good performance. This leads us to consider the optimal shaping and scaling for Gaussian proposals for the RWM algorithm.

The seminal paper of \cite{roberts1997weak} introduced techniques for ana\-lysing the optimal scaling problem in the limit, as the dimension of the target tends to infinity, for independent and identically distributed targets (IID targets). The key insight was that (under appropriate rescaling) the random paths of any single component converge in law to the random path of a diffusion process (more precisely, weak convergence in the Skorohod topology), and that the speed of the limiting diffusion can be optimised using elementary techniques. Since that work, there has been a reasonable amount of attention placed on extending their results to other MCMC algorithms, as well as to more general targets. 

The main novel contributions of this work are (i) we consider block-inde\-pendent targets with possibly large dependence structures within blocks, (ii) we consider non-spherical proposals, (iii) we show that the random path of an full dependent block converges in law to the path of a multivariate anisotropic diffusion, (iv) we show that the entire random path in $\RR^\infty$ converges in law to an infinite product of independent multivariate aniso\-tropic diffusions, (v) we address both the scaling and the shaping of the proposal under joint convergence, and (vi) we interpret our results to provide conditions under which high-dimensional dependence in the target distri\-bution will cause RWM performance under optimal shaping and scaling to deteriorate relative to any I.I.D. target. 

Some additional contributions of this work, which are not the main focus, are that (a) we have relaxed the assumptions needed to prove weak convergence in the Skorohod topology of the process even in the special case of I.I.D. targets (this amounts to fewer moment conditions and smoothness cond\-itions on the target distribution), (b) we have proven a very general inte\-gration by parts lemma for probability distributions which may be of interest more broadly than this work (for example in the context of Stein's Method), and (c) we give a fairly comprehensive list of consequences of the assumption that a probability density $\pi$ has $\grad\log\pi$ Lipschitz.

\subsection{Outline of this work}
A brief, non-comprehensive summary of existing work in the area is given in Subsection \ref{subsec:previous-work}. We provide notation and defin\-itions used through out the paper in Section \ref{subsec:notation}, which sets up the weak convergence and optimal scaling/shaping problems. That subsection also provides a fairly comprehensive list of consequences of the only assumption used to prove weak convergence (that, when $\pi$ is the target density, $\grad\log\pi$ is Lipschitz).

In Section \ref{sec:keyresults} we offer the main contributions of this work. Subsection \ref{subsec:weak} states the weak convergence result for finite dimensional processes upon which everything is built and states and proves the convergence result for the infinite dimensional process. Subsection \ref{subsec:scaling} provides the optimal scaling of the proposal for a fixed shaping. Subsection \ref{subsec:shaping1} presents the optimal shaping in terms of the spectral gap of the generator for certain special target distributions for which it is analytically tractable. Subsection \ref{subsec:shaping2} presents the optimal shaping in terms of short term autocorrelations for more general target distributions, and demonstrates that this alternative objective upper bounds the spectral gap, providing ``speed limits'' on the performance of RWM algorithms. Lastly among the key results, Subsection \ref{subsec:hd-dependence} provides a discursive analysis of the implications of the derived speed limits upon the performance decay of RWM in scenarios of high-dimensional dependence, relative to the independent target case.

Section \ref{sec:pf-weak-conv} provides the proof of weak convergence in the Skorohod topology for finite dim\-ensional processes. Section \ref{sec:ibps}, which may also be of interest to readers not working in MCMC theory or practice, proves an integration by parts lemma for probability distribution (in Sub\-section \ref{subsec:ibps}) which is more general than what we have found in the literature, and proves several consequences of the assumption that a density, $\pi$ has $\grad\log\pi$ Lipschitz (in Subsection \ref{subsec:gll-dens}).

\subsection{Previous work}\label{subsec:previous-work}
The seminal work utilising diffusion limits and weak convergence in the Skorohod topology to address the optimal scaling problem in MCMC is \citep{roberts1997weak}. That work considers I.I.D. targets of the form $\pi^{\otimes d}$ as $d\to\infty$, where $\pi$ is a density on $\RR^1$ with $D\log\pi$ Lipschitz continuous, and the MCMC algorithm is RWM with spherical Gaussian proposals. The paper has additional regularity assumptions of smoothness (that the density is twice continuously differentiable, though the proof given actually also uses the existence of bounded third der\-ivatives) and moment conditions (that $\EEE{X\sim\pi} (D\log\pi(X))^8 <\infty$ and $\EEE{X\sim\pi} (\frac{D^2\pi}{\pi})^4 <\infty$). That work proves weak convergence of the first component's path process to that of a univariate Langevin diffusion, and derives an optimal scaling criteria of accepting $\approx 23.4 \% $ of proposals for the RWM algorithm with I.I.D targets. The goal of our present work can be summarised as extending the results of \citep{roberts1997weak}.

The follow-up paper, \citep{roberts1998optimal}, derives similar optimal scaling results for the Metropolis Adjusted Langevin Algorithm (MALA). That work considers I.I.D. targets of the form $\pi^{\otimes d}$ as $d\to\infty$ as well, where $\pi$ is a density on $\RR^1$ with $D\log\pi$ Lipschitz continuous, and the MCMC algorithm is MALA with an isotropic diffusion term. The paper has additional regularity assumptions of smoothness (that the density is eight times continuously differentiable), a growth assumption (that the first eight derivatives of $\log\pi$ are all bounded by a polynomial) and moment conditions (that all polynomial moments are finite; $\forall k\in\NN \lcr({\EEE{X\sim\pi}(X^k)<\infty })$. That work proves weak convergence of the first component's path process to that of a univariate Langevin diffusion, and derives an optimal scaling criteria of accepting $\approx 57.4  \% $ of proposals for the MALA algorithm with I.I.D targets. We do not presently extend this article, but may apply the techniques of this paper to MALA in subsequent work

The survey paper, \citep{roberts2001optimal}, provides further context to the optimal scaling problem and presents theoretical and empirical results clear\-ly and concisely. As well as summ\-arising previous work, the paper provides an examination of how the optimal scaling in finite dimensions approaches the infinite dimensional limits derived via diffusion limits. Lastly, and a large inspiration for this work, \citep{roberts2001optimal} consider extensions to independent products which differ (only) by heterogeneity of scale, provide the first optimal shaping result. They note that ``This result does not appear in any of the MCMC scaling literature, so we have sketched a proof which appears in the Appendix,'' however the sketched proof considers only conv\-ergence of a single component and so the impact of optimal scaling and shaping on the mixing properties of the full multidimensional target may be questioned. Our present paper builds on these ideas to provide multivariate convergence and optimal scaling and shaping results which apply to the full multi\-dimensional limit.

The work of \citep{neal2006optimal} considers modified RWM and MALA algo\-rithms where only a fraction of the components are updated at a time. 
Algorithms of that type are typically more efficient as the an update which would have been rejected because of a single ``bad proposal'' in one comp\-onent is not going to affect the speed of all dimensions. 
That paper derives the optimal scaling and update rate simultaneously with the same assum\-ptions as \citep{roberts1997weak}. 
In Section \ref{sec:pf-weak-conv} we borrow the structure of the weak convergence proof from the detailed and precise description of\citep{neal2006optimal}. 

The papers \citep{bedard2007weak} and \citep{bedard2008optimal}, and the related Ph.D. thesis \citep{bedard2006robustness} consider a more extreme version of the scale homogeneity problem for the RWM algorithm. Particularly they address the case that the scaling of various components shrink or grow at disparate rates as the dimension tends to infinity. That collection of work shows that, depending on which scalings are dominant, the limiting law of the first component may be either a univariate RWM process or a Langevin diffusion, and that in certain situations the optimal acceptance rate will be quite different than the $23.4\% $ of the homogeneous or limited inhomogeneity cases. That work also slightly relaxed the assumptions of the original paper of \citep{roberts1997weak} by reducing the powers in their moment assumptions.

More recently, some authors have considered working with infinite dim\-ensional targets, part\-icularly in the case that the target has a density with respect to the law of a Gaussian process. This includes \citep{mattingly2012diffusion} which covers the RWM case and \citep{pillai2012optimal} which covers the MALA case. These papers allow for a non-trivial dependence structure, but only under the strong assumption of absolute continuity with respect to an infinite-dimensional Gaussian distribution. They show that the $\approx 23.4\%$ and $\approx 57.4\%$ optimal acceptance rates for RWM and MALA respectively carry over to infinite dimensional distributions which have densities with respect to the laws of a Gaussian processes. Though these papers allow for a non-trivial dependence structure, they do not consider the optimal shaping problem.

Lastly, \citep{zanella2017dirichlet} utilise the theory of Dirichlet forms to establish weak converg\-ence of the infinite dimensional limit process for targets of the same form as in \citep{roberts1997weak}. 
Using the powerful theory of Mosco convergence, they are able to eliminate many of the assum\-ptions \citep{roberts1997weak}. 
In particular, that paper requires no additional smooth\-ness or moment assumptions.
In fact, they are able to demonstrate convergence of the Markov semigroup with assumptions on $D\log\pi$ which are weaker than Lipschitz continuity, though to ensure weak convergence of the path processes they do require Lipschitz continuity.
Hence, our present paper's assumptions used to demonstrate weak convergence of the infinite dimen\-sional paths are the same as in that work.

\subsection{Notation and Definitions}
\label{subsec:notation}
Let $\pi$ be the Lebesgue density of a prob\-ability distribution on $\RR^k$.

For each $d\in\NN$, let $\Pi_d = \pi^{\otimes d}$. Then $\Pi_d$ is the joint density for $d$ independent blocks, each of dimension $k$, identically distributed according to $\pi$. 

The ``accelerated, continuous time'' Random Walk Metropolis (RWM) proc\-ess with stationary distribution $\Pi_d$, and mean-0 proposal distribution: 
\[
	\Nn_d(l^2\Lambda):=\Nn\lcr({0,\frac{l^2}{(d-1)} I_d\otimes \Lambda }) \ , 
\] 
for $\Lambda\in\RR^{k\times k}$ symmetric positive definite, is the Markov proc\-ess, $\vX_d(t)$ such that $\vX_d(0)\sim \Pi_d$ with infinitesimal generator $\hat G^{l,\Lambda}_d$ given by:
\[
[\hat G^{l,\Lambda}_d f](x) = kd \ \EEE{Z\sim\Nn_d(l^2\Lambda)} \lcr[{(f(x+Z) - f(x))\lcr({1\wedge\frac{\Pi_d(x+Z)}{\Pi_d(x)}})}]\ 
\]
for $f\in \ol C(\RR^{kd})$. 

Note that $I_d \otimes \Lambda$ is the $kd \times kd$ block diagonal matrix with $d$ blocks of size $k\times k$ all equal to $\Lambda$:
\[
	I_d\otimes\Lambda = \lcr[{\begin{matrix} 
		\Lambda & 0 		& 0 		& \cdots & 0 		& 0 		\\
		0 		& \Lambda	& 0 		& \cdots & 0 		& 0 		\\
		0 		& 0			& \Lambda	& \cdots & 0 		& 0 		\\		
		\vdots 	& \vdots	& \vdots	& \ddots & \vdots 	& \vdots 	\\
		0 		& 0			& 0			& \cdots & \Lambda 	& 0		 	\\		
		0 		& 0			& 0			& \cdots & 0		& \Lambda 	\\						
		\end{matrix}}]
\]
Equivalently, $\vX_d(t)$ is the pure jump Markov process with jumps occurring at exponentially distributed intervals with rate $kd$, and jumps distributed according to the RWM transition kernel with proposal distribution $\Nn_d(l^2\Lambda)$, started according to the stationary distribution $\Pi_d$. Note that, since the RWM transition kernel has a non-zero probability of remaining at the same point, the continuous version may ``jump to the same point'' (this occurs if the metropolis accept/reject step rejects a proposal).

Let $\vX^{(i)}_d(t)$ be the stochastic process on $\RR^k$ consisting of the $i$th $k$-dimen\-sional block of $\vX_d(t)$. In general, this process is not Markov. For $i<j$, let $\vX^{(i):(j)}_d(t)$ be the stochastic process on $\RR^k$ consisting of the $i$th, $(i+1)$th,...,$j$th $k$-dimensional blocks of $\vX_d(t)$, so that $\vX^{(i):(j)}_d(t)$ has paths which take values in $\RR^{k(j-i)}$.

For each $r\in\NN$, the anisotropic Langevin diffusion with stationary distrib\-ution $\pi^{\otimes r}$, and anisotropy matrix $\Lambda$, and time-scaling factor $l^2 a_\Lambda (l)$,  $\vX^r(t)$, is the Markov process with $\vX^r(0)\sim\pi^{\otimes r}$ and infinitesimal generator $G^{l,\Lambda}$ given by:
\[ \label{def:gen-diff}
	[G^{l,\Lambda} f]
	 	& = k l^2 a_\Lambda(l) \lcr({\frac{1}{2} [I_r\otimes\Lambda] : (\grad^2 f) + \frac{1}{2}[\grad\log\pi^{\otimes r}]'[I_r\otimes\Lambda] (\grad f)})
\]
for a sufficiently large class of functions $f$, and where
\[
	a_\Lambda(l) 
		& = 2\Phi \lcr({ - \frac{l \sqrt{\Sigma:\Lambda}}{2}}) \\
	\Sigma 
		& = \VVar{X\sim\pi}(\grad\log\pi(X))\\
	A:B			
		& = \sum_{i,j} A_{ji} B_{ij}\ .
\]
Equivalently, it is the diffusion process (with initial distribution $\pi^{\otimes k}$) satisfy\-ing the SDE:
\[
d \vX^r(t) = k l^2 a_\Lambda(l) [I_r\otimes\Lambda] [\grad\log\pi^{\otimes r}(\vX^r(t))]\ dt + \sqrt{2 k l^2 a(l)} I_r \otimes \sqrt{\Lambda}\ d\vB(t)  
\]
where $\vB(t)$ is a standard $kr$-dimensional Wiener process, and $\sqrt{\Lambda}$ is the symmetric positive definite square-root of the symmetric positive definite matrix $\Lambda$. Thus, $\vX^r$ is the same process as $r$ independent copies of the $\vX^1$ appended together.

Later, in the case of $r=1$, we will also compare this to the generator of a similar diffusion, with the same stationary measure and anisotropy matrix, at standardized speed:
\[
	[G^\Lambda f] (x)
	 	& =k \lcr({\frac{\Lambda}{\Lambda:\Sigma} : (\grad^2 f) + [\grad\log\pi]'\frac{\Lambda}{\Lambda:\Sigma} (\grad f)})
\]
The choice of time-scaling used for the standardized speed, as will be shown in Corollary \ref{cor:opt-scale}, corresponds to $G^{l,\Lambda}$ for the optimal choice of $l$ given $\pi$ and $\Lambda$ up to universal constants (not dependent on $k$, $\pi$, or $\Lambda$).

We make the following key assumption about $\pi$ throughout this work:
\begin{assumption*}[\textbf{A1}]\label{ass:grad-log-lip} $\grad\log\pi$ is $L$-Lipschitz continuous for some $L>0$. 
\end{assumption*}
Some geometric and analytic consequences of this assumption are that:
\begin{proposition}[Summary of consequences of assumption (A1) on $\pi$]\label{prop:summary-of-gll}
The assumption that $\grad\log\pi$ is Lipschitz continuous implies all of the foll\-owing:
\begin{enumerate}[label=(\alph*)]
\item $\grad\log\pi$ is differentiable (Lebesgue-)almost everywhere, and \[\norm{\grad^2\log\pi} \leq L\] where it exist (by Rademacher's theorem, see \citep{federer1969geometric}).
\item The tails of $\pi$ are at least as heavy of those of a Gaussian distribution. In fact it can be bounded below by a tangent Gaussian curve with variance-covariance matrix $\frac{1}{L} I$ at each point. (Lemma \ref{lem:tan-min}). This fur\-ther implies that $\supp(\pi) = \RR^k$.
\item $\pi$ is uniformly bounded above (Lemma \ref{lem:bound-abv}).
\item $\pi$ is Lipschitz (Lemma \ref{lem:lip-gll}).
\item $\pi$ has a broadly applicable integration by parts formula (Corollary \ref{cor:ibps-gll}): For any $f: \RR^k\to \RR$ which is locally Lipschitz, with $\grad f(X)$ and $f(x)\grad\log\pi(x)$ integrable (w.r.t. $\pi(x)dx$) we have
\[
    \EEE{X\sim\pi} f(X)\grad\log\pi(X)  = -\EEE{X\sim\pi}\grad f(X) \ .	
\]
Similar formulas also hold for Jacobians of locally Lipschitz functions $f:\RR^k\to\RR^m$, and for divergences of locally Lipschitz functions $f:\RR^k\to\RR^k$.
\item The following identities hold (Lemma \ref{lem:gll-glp-var}): 
\[
	\EEE{X\sim\pi}[\grad\log\pi(X)] 
		& = 0 \\ 
	\VVar{X\sim\pi}[\grad\log\pi(X)] 
		&= - \EEE{X\sim\pi}[\grad^2 \log\pi(X)] \ .
\]
\item If $X\sim\pi$ then $\grad\log\pi(X)$ is sub-Gaussian with proxy-variance $L$ (Lemma \ref{lem:gll-subg}). Hence all polynomial moments of $\grad\log\pi$ and $\grad^2\log\pi$ are finite (Remark \ref{rem:gll-moments}).
\end{enumerate}
\end{proposition}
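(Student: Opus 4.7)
The plan is that this proposition aggregates seven consequences of (A1) which are established as individual lemmas in Section \ref{sec:ibps}; the present sketch walks through them in order of logical dependence and flags item (e) as the main technical obstacle.

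Item (a) is immediate from Rademacher's theorem applied to the $L$-Lipschitz map $\grad\log\pi$, with the operator-norm bound $\norm{\grad^2\log\pi}\le L$ wherever it exists following from $L$-Lipschitzness. For (b), I fix $x_0 \in \RR^k$ and integrate $\grad\log\pi$ along the segment from $x_0$ to $x$; estimating the integrand against its value at $x_0$ via $L$-Lipschitzness yields the tangent quadratic lower bound
\[
\log\pi(x) \ge \log\pi(x_0) + \inner{\grad\log\pi(x_0)}{x-x_0} - \frac{L}{2}\norm{x-x_0}^2,
\]
i.e.\ a Gaussian lower envelope of variance $L^{-1}I$. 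Strict positivity of this envelope everywhere yields $\supp(\pi)=\RR^k$. The symmetric tangent Gaussian \emph{upper} envelope follows from the same argument with the one-sided estimate replaced by its two-sided counterpart.

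For (c), integrating the lower envelope over $\RR^k$ and optimizing out the linear term in $x-x_0$ gives $\pi(x_0) \le (L/(2\pi))^{k/2}$ uniformly in $x_0$. For (d), write $\grad\pi = \pi \cdot \grad\log\pi$ and observe that, although $\norm{\grad\log\pi(x)}$ may grow linearly in $\norm{x}$, the upper envelope from (b) forces $\pi(x)$ to decay at a matching Gaussian rate, so the product $\pi\grad\log\pi$ is uniformly bounded and $\pi$ is Lipschitz.

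The main obstacle is item (e), the general IBP formula, which I would establish in Section \ref{subsec:ibps} via a truncate-and-mollify approximation: construct smooth compactly supported approximants $f_n\to f$, apply classical IBP to each, and pass to the limit using the Gaussian tail decay of $\pi$ from (b) together with the integrability hypotheses to dominate the boundary and remainder contributions. Item (f) then follows from (e): the mean identity by taking $f \equiv 1$ (rigorously through a cutoff approximation), and the variance identity by taking $f = \grad\log\pi$ coordinate-wise, legitimate because $\grad\log\pi$ is itself Lipschitz and hence locally so. For (g), I set $\phi(t) = \EEE{X\sim\pi}\exp(t\,v\Tr\grad\log\pi(X))$ for unit $v \in \RR^k$; differentiating under the integral and applying (e) to trade the factor $v\Tr\grad\log\pi$ for $-t\,v\Tr\grad^2\log\pi\,v$ produces $\phi'(t) \le tL\,\phi(t)$ by the Hessian bound in (a), and Gr\"onwall's inequality gives $\phi(t) \le \exp(Lt^2/2)$, i.e.\ sub-Gaussianity with proxy-variance $L$. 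Finiteness of all polynomial moments of $\grad\log\pi$ then follows by the standard MGF argument, and finiteness of moments of $\grad^2\log\pi$ from the pointwise bound in (a).
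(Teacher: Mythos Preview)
Your outline tracks the paper closely for items (a)--(c), (f), and most of (g), but there is a genuine confusion about what (b) delivers that propagates into errors in (d) and (e).

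Item (b) gives a \emph{lower} bound on $\pi$: the tangent Gaussian shows the tails are at least as heavy as Gaussian. The ``upper envelope'' you mention, $\log\pi(x)\le \log\pi(x_0)+\inner{\grad\log\pi(x_0)}{x-x_0}+\tfrac{L}{2}\norm{x-x_0}^2$, has the wrong sign on the quadratic to give decay; it bounds $\pi$ above by a \emph{growing} function. So when in (d) you write ``the upper envelope from (b) forces $\pi(x)$ to decay at a matching Gaussian rate,'' and in (e) you invoke ``the Gaussian tail decay of $\pi$ from (b),'' you are using a bound that does not exist. Indeed, densities like $\pi(x)\propto(1+\norm{x}^2)^{-(k+1)/2}$ satisfy (A1) but have only polynomial tails. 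For (d), the paper instead uses the refined bound from (c), namely $\pi(x)\le (L/2\pi)^{k/2}e^{-\norm{\grad\log\pi(x)}^2/2L}$, and then maximises $s\mapsto s\,e^{-s^2/2L}$ to bound $\norm{\grad\pi}=\pi\norm{\grad\log\pi}$.

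For (e), this matters more. Your truncate-and-mollify plan needs the boundary term $\int f\,\grad\chi_n\,\pi$ to vanish, and without upper tail control on $\pi$ the stated integrability hypotheses on $\grad f$ and $f\grad\log\pi$ are not obviously sufficient. The paper takes a genuinely different route through geometric measure theory: it writes $\EE\grad f=\int_0^\infty\int_{A_r}\grad f\,d\Ll^k\,dr$ via layer-cake, applies the Gauss--Green theorem on each superlevel set $A_r=\pi^{-1}((r,\infty))$ (which is compact by Lemma~\ref{lem:comp-lvl-1}, so no boundary-at-infinity arises), and then uses the coarea formula to rewrite the surface integrals as a volume integral recovering $-\EE[f\grad\log\pi]$. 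The compactness of superlevel sets is what replaces your missing tail-decay step, and the paper remarks explicitly that it is unaware of a proof at this generality via the cutoff route.

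A smaller gap in (g): you differentiate the MGF $\phi$ directly, but differentiation under the integral requires $\phi$ finite a priori. The paper first establishes a crude sub-Gaussian bound (proxy variance $4(k+2)L$) by bounding even moments recursively via IBP, $\mu_{2r}\le (k+2)Lr\,\mu_{2(r-1)}$, and only then runs your Gr\"onwall-type argument on $\psi(t)=\EE e^{\inner{t}{\grad\log\pi}}$ to sharpen the constant to $L$.
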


\newpage

\section{Main Results}\label{sec:keyresults}
%!TEX root = ../blockiid_main.tex
\setlength{\parindent}{0pt}
\setlength{\parskip}{6pt}
\subsection{Weak Convergence in the Skorohod Topology}
\label{subsec:weak}
\begin{theorem}[Weak convergence of finite dimensional processes in the Skorohod topology on $\RR^{rk}$]\label{thm:weak-conv}
Under the definitions above, if assumption \hyperref[ass:grad-log-lip]{(A1)} holds then (for each $r\in\NN$) $\vX_d^{(1):(r)}$ converges weakly in the Skorohod topology to $\vX^r$ as $d\to\infty$.
\end{theorem}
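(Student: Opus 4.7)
The plan is to prove this finite-dimensional weak convergence by the classical route of Markov generator convergence combined with Skorohod tightness, following the strategy of \citep{roberts1997weak,neal2006optimal}. Since both $\vX_d^{(1):(r)}$ and $\vX^r$ start at the common stationary marginal $\pi^{\otimes r}$, it suffices to show (a) $[\hat G^{l,\Lambda}_d f](x) \to [G^{l,\Lambda} f](x^{(1):(r)})$ in $L^2(\Pi_d)$ for $f$ ranging over a core of $G^{l,\Lambda}$ (e.g.\ $C_c^\infty(\RR^{kr})$, viewed as functions of only the first $r$ blocks), and (b) tightness of $\{\vX_d^{(1):(r)}\}$ in the Skorohod space $D([0,\infty),\RR^{kr})$.

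The heart of the calculation splits the proposal $Z\sim\Nn_d(l^2\Lambda)$ into independent blocks $Z^{(1):(r)}$ and $Z^{(r+1):(d)}$, and writes the log-acceptance ratio as $R = R_{1:r} + R_{r+1:d}$ where $R_{s:t} = \sum_{i=s}^t \log[\pi(x^{(i)}+Z^{(i)})/\pi(x^{(i)})]$. Taylor-expanding $f(x^{(1):(r)}+Z^{(1):(r)}) - f(x^{(1):(r)})$ to second order in $Z^{(1):(r)} = O_p(d^{-1/2})$: the first-order term $\grad f \cdot Z^{(1):(r)}$ against the \emph{mean} of the acceptance probability vanishes by symmetry of $Z^{(1):(r)}$, while its covariance with the \emph{fluctuation} of the acceptance probability (driven by the dependence of $R_{1:r}$ on $Z^{(1):(r)}$) produces, after using $\EE[Z^{(1):(r)} (Z^{(1):(r)})'] = \frac{l^2}{d-1}[I_r\otimes\Lambda]$, the drift term $\frac{k l^2 a_\Lambda(l)}{2}[\grad\log\pi^{\otimes r}]'[I_r\otimes\Lambda]\grad f$ of $G^{l,\Lambda}$. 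The second-order term $\tfrac{1}{2} (Z^{(1):(r)})'\grad^2 f\, Z^{(1):(r)}$ paired with the leading value $a_\Lambda(l)$ of the acceptance probability produces the diffusion term $\frac{k l^2 a_\Lambda(l)}{2}[I_r\otimes\Lambda] : \grad^2 f$; the prefactor $kd$ in $\hat G^{l,\Lambda}_d$ cancels the $1/(d-1)$ in $\Cov(Z)$. The limiting value $a_\Lambda(l)$ is identified by a CLT applied to $R_{r+1:d} = \sum_{i=r+1}^d[\grad\log\pi(x^{(i)})'Z^{(i)} + \tfrac{1}{2}(Z^{(i)})'\grad^2\log\pi(x^{(i)})Z^{(i)} + \text{h.o.t.}]$: under $\Pi_d$-stationarity the $x^{(i)}$ are i.i.d.\ $\pi$, and Proposition \ref{prop:summary-of-gll}(f) gives $\Var(\grad\log\pi) = \Sigma = -\EE[\grad^2\log\pi]$, so the first and second sums contribute a limiting mean $-\mu/2$ and variance $\mu := l^2\Lambda:\Sigma$, yielding $\EE[1\wedge e^{R_{r+1:d}}] \to 2\Phi(-\sqrt{\mu}/2) = a_\Lambda(l)$.

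The main obstacle is executing this analysis with only assumption \hyperref[ass:grad-log-lip]{(A1)}: every moment and smoothness input must be extracted from Lipschitz continuity of $\grad\log\pi$, which is precisely the role of Proposition \ref{prop:summary-of-gll}. Concretely, (i) Taylor remainders in both $f$ and $\log\pi$ must be moment-controlled using the a.e.\ boundedness of $\grad^2\log\pi$ from (A1)(a) and the sub-Gaussianity of $\grad\log\pi$ from (A1)(g); (ii) the non-smooth map $u \mapsto 1\wedge e^u$ must be handled via Lipschitz control combined with uniform integrability, so that the CLT-based convergence in law of $R_{r+1:d}$ upgrades to convergence of $\EE[1\wedge e^{R_{r+1:d}}]$ uniformly enough in $(x, Z^{(1):(r)})$; and (iii) the integration-by-parts identity (A1)(e) is what brings the first-order/fluctuation pairing into the canonical drift form involving $\grad\log\pi^{\otimes r}$ (by symmetrising a term with a $\grad^2\log\pi$ factor into one with a $\grad\log\pi$ factor). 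Once generator convergence is secured, Skorohod tightness follows from the Aldous-Rebolledo criterion, using stationarity of $\vX_d^{(1):(r)}$ with marginal $\pi^{\otimes r}$ and uniform control of $\hat G^{l,\Lambda}_d$ on a rich test class; weak convergence to $\vX^r$ then follows from uniqueness of the martingale problem for $G^{l,\Lambda}$ with initial law $\pi^{\otimes r}$.
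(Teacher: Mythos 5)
Your proposal has the same computational core as the paper---split the proposal into the first $r$ blocks and the rest, Taylor-expand the test function and the log-acceptance ratio, use a CLT for the sum $R_{r+1:d}$ to identify the limiting acceptance probability $a_\Lambda(l)$, and supply all moment and smoothness inputs via Proposition \ref{prop:summary-of-gll}. The high-level framework, however, is a genuinely different route. You propose $L^2(\Pi_d)$ convergence of generator evaluations, an explicit tightness proof via Aldous--Rebolledo, and a conclusion via uniqueness of the martingale problem. The paper instead applies Ethier--Kurtz (Ch.\ 4, Cor.\ 8.7, restated as Proposition \ref{prop:ek-conv}), whose key hypothesis is \emph{uniform} convergence of $\hat G_d^{l,\Lambda} f_d$ to $G^{l,\Lambda} f$ on a sequence of ``large sets'' $F_d$ that the process occupies on $[0,T]$ with probability tending to one. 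That corollary internalises the tightness argument, so the paper never invokes Aldous--Rebolledo separately. The trade-off is that your route is more elementary in its ingredients but requires a standalone tightness argument, whereas the paper's route shifts all the work onto constructing $F_d$ (intersection of events controlling $R_d$, $S_d$, $U_d$, and $\|\grad\log\pi^{\otimes r}(x^{(1):(r)})\|$) and proving $\Pr(\vX_d \in F_d\ \forall t\le T)\to 1$ plus the two uniform estimates in Lemmas \ref{lem-rn06-a3} and \ref{lem-rn06-a6}. Your remark that the CLT must be upgraded ``uniformly enough in $(x,Z^{(1):(r)})$'' points at exactly this issue, but your sketch does not spell out the replacement for the large-set construction, which is the crux of making things work under (A1) alone (because $\grad\log\pi$ is unbounded even when $f$ is compactly supported, as the paper notes in its comparison with \citep{neal2006optimal}).

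One concrete inaccuracy: your item (iii) claims that the integration-by-parts identity is what ``brings the first-order/fluctuation pairing into the canonical drift form'' by symmetrising a $\grad^2\log\pi$ factor into a $\grad\log\pi$ factor. That is not what happens. In the paper's Lemma \ref{lem-rn06-a3}, the drift term already emerges with a $\grad\log\pi^{\otimes r}$ factor directly from the first-order Taylor expansion of the conditional acceptance probability $\gamma(z^{(1):(r)},x)$ in the proposal increment $z^{(1):(r)}$; no integration by parts is involved in producing the drift. Integration by parts (Lemma \ref{lem:gll-glp-var}, via Corollary \ref{cor:ibps-gll}) is used instead to establish the moment identity $\EEE{X\sim\pi}[-\Lambda:\grad^2\log\pi(X)]=\Lambda:\Sigma$, which pins the concentration target of $S_d$ at $J=\Lambda:\Sigma$ and hence identifies the Gaussian limit $\Nn(-l^2 J/2,\,l^2 J)$ for the log-acceptance ratio, from which $a_\Lambda(l)=2\Phi(-l\sqrt{J}/2)$ follows via Proposition \ref{prop-censored-LN-mean}. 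It is also used in the proof of sub-Gaussianity (Lemma \ref{lem:gll-subg}), which supplies the moment bounds you invoke in item (i). So your intuition that (A1)(e) is load-bearing is right, but it is load-bearing for the CLT normalisation and the moment control, not for the drift derivation.
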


The proof of this result is the content of Section \ref{sec:pf-weak-conv}.

By bootstrapping our result on weak convergence of the stochastic process of finite dimensional blocks we are also able to demonstrate weak convergence of the infinite dimensional process. 

Let $\vY_d(t) = (\vX_d(t),0,0,...)$, so that $\vY(t)\in\RR^\infty$ (this is similar to the processes considered in \citep{zanella2017dirichlet}).

By the Kolmogorov extension theorem (see for example \citep{tao2011introduction}, section 2.4 therein) applied to the sequence $\vX^r$ over $r\in\NN$, there is a unique (in probability law) process $\vX^\infty$ taking values in $\RR^\infty$ such that the marginal process of the first $kr$ components has the same distribution as $\vX^r$. To prove convergence of $\vY_d$ to $\vX^\infty$ in the Skorohod topology (of $\RR^\infty$ with the product topology) we need the following lemma:

\begin{lemma}\label{lem:sep-pt-prod} If $\RR^\infty$ is equipped with the metric 
\[\label{eqn:prod-metric}
	r(x,y) 
	 &= \sum_{i\geq 1} 2^{-i} (\abs{x_i- y_i} \wedge 1)
\] 
(which generates the product topology) then 
\[
	M 
		& = \set{f\circ \rho_j \st j\in\NN, f\in \ol C(\RR^j)}
\] 
strongly separates points (where $\rho_j :\RR^\infty\to\RR^j$ is the projection map onto the first $j$ components).
\end{lemma}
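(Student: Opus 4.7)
The plan is to verify the Ethier--Kurtz definition of ``strongly separates points'': for every $x\in\RR^\infty$ and every $\epsilon>0$, I need to exhibit a finite subfamily $\{g_1,\ldots,g_n\}\subset M$ such that $\inf_{y:r(y,x)\geq\epsilon}\max_i|g_i(y)-g_i(x)|>0$. I expect a single carefully chosen element of $M$ to already suffice, obtained by truncating the series that defines $r$.

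First I would fix $x\in\RR^\infty$ and $\epsilon>0$ and choose $j=j(\epsilon)\in\NN$ large enough that $2^{-j}<\epsilon/2$. Since each summand $2^{-i}(|x_i-y_i|\wedge 1)$ is bounded above by $2^{-i}$ uniformly in $y$, the tail of $r(x,y)$ beyond index $j$ satisfies $\sum_{i>j}2^{-i}(|x_i-y_i|\wedge 1)\leq 2^{-j}$ for every $y$. Consequently, whenever $r(x,y)\geq\epsilon$, the head must carry most of the mass:
\[
\sum_{i=1}^j 2^{-i}(|x_i-y_i|\wedge 1)\;\geq\; \epsilon-2^{-j}\;>\;\epsilon/2.
\]

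Next I would define $f:\RR^j\to\RR$ by $f(u_1,\ldots,u_j)=\sum_{i=1}^j 2^{-i}(|x_i-u_i|\wedge 1)$. As a finite sum of compositions of continuous functions with the minimum operation, $f$ is continuous; it is also bounded by $\sum_{i=1}^j 2^{-i}<1$. Hence $f\in\ol C(\RR^j)$ and $g:=f\circ\rho_j\in M$. By construction $g(x)=f(x_1,\ldots,x_j)=0$, while the head-mass estimate above gives $g(y)>\epsilon/2$ for every $y$ with $r(x,y)\geq\epsilon$. Therefore
\[
\inf_{y:r(x,y)\geq\epsilon}|g(y)-g(x)|\;\geq\;\epsilon/2\;>\;0,
\]
so the singleton family $\{g\}\subset M$ witnesses strong separation of $x$ from the $\epsilon$-far complement.

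There is no real obstacle here; the proof is a one-line truncation argument once the tail of the product metric is controlled. The only point I would emphasize is that both the integer $j$ and the cut-off function $f$ are allowed to depend on the specific point $x$ and radius $\epsilon$ being separated---this is what the Ethier--Kurtz definition permits---while the family $M$ itself is fixed and rich enough to supply such a cut-off for every $(x,\epsilon)$ pair.
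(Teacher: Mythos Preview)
Your proof is correct and follows essentially the same truncation argument as the paper: choose an index $j$ so that the tail $\sum_{i>j}2^{-i}(|x_i-y_i|\wedge 1)\le 2^{-j}<\epsilon/2$, forcing the head of the series to exceed $\epsilon/2$ whenever $r(x,y)\ge\epsilon$, and then use that head as a bounded continuous function of the first $j$ coordinates. The only cosmetic difference is that the paper post-composes this truncated distance with an affine map and a positive-part $(\,\cdot\,)_+$ to obtain a bump function equal to $1$ at $x$ and $0$ on $\{y:r(x,y)\ge\epsilon\}$, whereas you use the truncated distance itself (which is $0$ at $x$ and at least $\epsilon/2$ on that set); either choice lies in $\ol C(\RR^j)$ and witnesses strong separation with a single function.
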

\begin{proof}
Fix $1>\delta>0$ and $x\in \RR^\infty$, and let $m_\delta = \ceil{-\log_2(\delta)}$. Let 
\[
	h_{x,\delta}(z) = \frac{2}{\delta} \lcr({\frac{\delta}{2} - \sum_{i=1}^{m_{\delta}+1} 2^{-i} (\abs{x_i- z_i} \wedge 1)})_+ \ .
\]
Notice that $h_{x,\delta} \in M$. Obviously $h_{x,\delta}(x) =1$.

Suppose $y\in\RR^\infty$ such that $r(x,y) \geq\delta$; since 
\[
	\sum_{i=m_{\delta}+2}^{\infty} 2^{-i} (\abs{x_i- y_i} \wedge 1) \leq 2^{-(m_\delta+1)} \leq \delta / 2 \ ,
\]
then
\[
	\sum_{i=1}^{m_{\delta}+1} 2^{-i} (\abs{x_i- y_i} \wedge 1) \geq \delta/2 \,
\] 
and hence $h_{x,\delta}(y) =0$. 
\end{proof}

\begin{theorem}[Weak convergence  of the infinite dimensional process in the Skorohod topology of $\RR^\infty$]\label{thm:weak-conv-sk-prod}
Under the definitions above, if assumption \hyperref[ass:grad-log-lip]{(A1)} holds then $\vY_d$ converges weakly in the Skorohod topology (of $\RR^\infty$ with the metric $r$ defined in Equation (\ref{eqn:prod-metric})) to $\vX^\infty$. 
\end{theorem}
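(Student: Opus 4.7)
The plan is to apply a standard criterion for weak convergence in the Skorohod space of a Polish space (of the form in Ethier and Kurtz's textbook \emph{Markov Processes: Characterization and Convergence}, Theorem 3.9.1): since $(\RR^\infty,r)$ is Polish and $M$ strongly separates its points by Lemma~\ref{lem:sep-pt-prod}, it suffices to establish (i) weak convergence $f\circ\vY_d\Rightarrow f\circ\vX^\infty$ in $D_\RR[0,\infty)$ for every $f\in M$, together with (ii) the compact containment condition, namely that for every $T,\eta>0$ there exists a compact $K\subset\RR^\infty$ with $\Pr(\vY_d(t)\in K\text{ for all }t\in[0,T])\ge 1-\eta$ uniformly in $d$.

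For (i), fix $f=g\circ\rho_j\in M$ with $g\in\ol C(\RR^j)$ and set $r=\lceil j/k\rceil$. For all $d$ with $kd\ge j$, $f\circ\vY_d$ agrees with the image of $\vX_d^{(1):(r)}$ under the bounded continuous map $h\colon\RR^{rk}\to\RR$ obtained by projecting onto the first $j$ coordinates and then applying $g$. By Theorem~\ref{thm:weak-conv}, $\vX_d^{(1):(r)}\Rightarrow\vX^r$ in $D_{\RR^{rk}}[0,\infty)$, and by the Kolmogorov-extension construction of $\vX^\infty$ one has $\rho_{rk}(\vX^\infty)\stackrel{d}{=}\vX^r$, so $f\circ\vX^\infty\stackrel{d}{=}h\circ\vX^r$. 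Since a continuous map between Euclidean spaces induces a continuous map on the associated Skorohod spaces, the continuous mapping theorem yields $f\circ\vY_d\Rightarrow f\circ\vX^\infty$ in $D_\RR[0,\infty)$, as required.

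For (ii), fix $T,\eta>0$. For each $i\in\NN$, take $r=\lceil i/k\rceil$ and apply Theorem~\ref{thm:weak-conv} together with continuous mapping under the scalar projection $\RR^{rk}\to\RR$ onto the $i$th coordinate to conclude that the $i$th scalar coordinate process of $\vY_d$ converges weakly in $D_\RR[0,T]$; it is therefore tight there, so there exists $a_i<\infty$ with
\[
\Pr\lcr({\sup_{t\in[0,T]}\abs{(\vY_d(t))_i}>a_i})\le \eta\cdot 2^{-i-1}
\]
uniformly in $d$. By Tychonoff's theorem, $K:=\prod_{i\ge 1}[-a_i,a_i]$ is compact in $\RR^\infty$ (with the product topology generated by $r$), and a union bound gives $\Pr(\vY_d(t)\in K\text{ for all }t\in[0,T])\ge 1-\eta/2$, establishing compact containment.

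The main obstacle is really bookkeeping rather than any genuinely new mathematical content: both (i) and (ii) reduce to continuous-mapping arguments on top of Theorem~\ref{thm:weak-conv}, with (ii) additionally invoking Tychonoff to realise the $a_i$-sized rectangle as a compact subset of $\RR^\infty$. With (i) and (ii) in hand, the Ethier--Kurtz criterion, together with the strongly separating family from Lemma~\ref{lem:sep-pt-prod}, delivers the claim.
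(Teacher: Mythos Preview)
Your approach is essentially the paper's: both invoke Lemma~\ref{lem:sep-pt-prod}, reduce to Theorem~\ref{thm:weak-conv} via the continuous mapping theorem applied to cylinder functions (using that lifts of continuous maps are Skorohod-continuous), and then close with an Ethier--Kurtz criterion. The only substantive difference is that the paper verifies joint convergence of finite tuples $(h_1,\ldots,h_n)\circ\vY_d$ and cites Corollary~3.9.2 without explicitly checking compact containment, whereas you check single-function convergence and supply the compact containment directly via coordinate-wise tightness and Tychonoff---arguably making your version more self-contained.
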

\begin{proof}
From Lemma \ref{lem:sep-pt-prod}, $M$ (as defined above) strongly separates points. Consider any finite subset of $M$, say $\set{h_1,...,h_n}$. Then without loss of generality there exists an $m\in\NN$ with and a set of functions $\set{f_1,...,f_n}\subset \ol{C}(\RR^{mk})$ with $h_i = f_i\circ \rho_{mk}$ for all $i\in\set{1,...,n}$. 

If $E$ is a metric space, and $f:E\to \RR$ then define its ``lift'' onto $D_E[0,\infty)$ as $\tilde f: (t \mapsto X(t)) \mapsto (t\mapsto f(X(t))$, so that $\tilde f:D_E[0,\infty)\to D_\RR[0,\infty)$. If $f$ is continuous in the topology on $E$ then $\tilde f$ must be continuous in the Skorohod topology on $D_E[0,\infty)$. This fact is proven in \citep{jakubowski1986skorokhod} (theorem 4.3 therein)\footnote{In fact, \citep{jakubowski1986skorokhod} tells us the stronger result,  that the Skorohod topology on $D_E(0,\infty]$ is the coarsest topology for which the lifts of all continuous functions are continuous.}.

Now, since all of the finite dimensional processes of $\vY_d$ converge weakly in the Skorohod topology, and since the lift of a continuous function on $\RR^{km}$ to $D_{\RR^{km}}[0,\infty)$ is continuous then, by the continuous mapping theorem, 
\[
(h_1,...,h_n)(\vY_d) \disttosk (h_1,...,h_n)(\vX^\infty) \ .
\] 
By \citep{ethier2009markov} (corollary 9.2 therein) this is sufficient to ensure that $\vY_d$ converges weakly in the Skorohod topology to $\vX^\infty$.
\end{proof}
 
\subsection{Optimal Scaling (with fixed shaping)}
\label{subsec:scaling}
For the rest of Section \ref{sec:keyresults}, for simplicity, we assume that $r=1$ so that we consider only the limiting dynamics of a single block. However, the results do carry over to mult\-iple blocks, and even to the infinite dimensional limit, because of tensor\-isation properties of spectral gaps \citep{bakry2013analysis} and covariances.

Having shown that the limiting process is a Langevin diffusion, it is natural to try to select the tuning parameters, $(l,\Lambda)$ such that the limiting diffusion mixes as fast as possible. For a fixed choice of $\Lambda$, if we change $l$ then we only change the time scaling of the process. That is, for different values of $l$, we are running a diffusion with the dynamics given by $G^\Lambda$ accelerated by a factor of $l^2 a_\Lambda(l) (\Lambda :\Sigma) /2$. Thus we find that the optimal choice of $l$ for a fixed $\Lambda$ is easy to determine; we need only maximise the time-change factor $l^2 a_\Lambda(l)$ in order to make the diffusion move towards stationarity as quickly as possible. As in \citep{roberts1997weak} we will characterise the optimal scaling both in terms of the value of the scaling factor, $l$, and in terms of the limiting average acceptance probability for the RWM algorithm. The optimal choice of $\Lambda$ will prove more challenging to derive as changing $\Lambda$ does not induce only a time-change on the dynamics of the process.

\begin{lemma}(Limiting Acceptance Rate)\label{lem:lim-acc}
	The limiting acceptance rate for the RWM proposals of $\vX_d$ is $a_\Lambda(l)$. That  is to say:
	\[
		\lim_{d\to\infty}\hspace{-1.5em} \EEE{\subalign{\qquad X&\sim\Pi_d \\ Z&\sim\Nn_d(l^2\Lambda)}}\lcr[{1 \wedge \frac{\Pi_d(X+Z)}{\Pi_d(X)}}] = a_\Lambda(l) = 2\Phi \lcr({ - \frac{l \sqrt{\Sigma:\Lambda}}{2}})
	\]
	where $\Sigma = \VVar{X\sim\pi}[\grad\log\pi(X)]$ and $(:)$ is the Frobenius inner product.
\end{lemma}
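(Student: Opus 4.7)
The plan is to write $W_d = \log \Pi_d(X+Z)/\Pi_d(X) = \sum_{i=1}^d \log \pi(X_i+Z_i)/\pi(X_i)$, where $X_i, Z_i \in \RR^k$ are the $i$th blocks, and to show that $W_d \disttosk \Nn(-s^2/2, s^2)$ with $s^2 = l^2\,\Sigma:\Lambda$. Once this is established, the conclusion follows from the classical identity $\EE[1\wedge e^W] = 2\Phi(-s/2)$ for $W\sim\Nn(-s^2/2, s^2)$ combined with bounded convergence (since $0 \le 1\wedge e^{W_d} \le 1$).

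First, I would perform a second-order Taylor expansion of $\log\pi$ along each block. By Proposition \ref{prop:summary-of-gll}(a), $\grad\log\pi$ is differentiable a.e.\ with $\|\grad^2\log\pi\|\le L$, so
\[
\log\frac{\pi(X_i+Z_i)}{\pi(X_i)} = Z_i'\grad\log\pi(X_i) + \tfrac{1}{2} Z_i'\grad^2\log\pi(X_i) Z_i + R_i,
\]
where $|R_i|$ is controlled (after approximating $\pi$ by a mollified version if needed for pointwise Hessians) by $L\|Z_i\|^3$ or an $L$-dependent second-order term. Writing $W_d = S_d + T_d + \mathcal{R}_d$ with $S_d = \sum_i Z_i'\grad\log\pi(X_i)$, $T_d = \tfrac12 \sum_i Z_i'\grad^2\log\pi(X_i)Z_i$, and $\mathcal{R}_d = \sum_i R_i$, the three pieces can be analyzed separately.

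Next, I would apply the CLT/LLN block-by-block. Conditionally on $X$, the blocks $(X_i,Z_i)$ are independent, and $Z_i\sim\Nn(0,\tfrac{l^2}{d-1}\Lambda)$ is independent of $X_i$. For $S_d$: conditional on $X$, $S_d$ is Gaussian with mean $0$ and variance $\tfrac{l^2}{d-1}\sum_i (\grad\log\pi(X_i))'\Lambda\grad\log\pi(X_i)$. By the usual LLN (legitimate thanks to Proposition \ref{prop:summary-of-gll}(g), giving all needed moments of $\grad\log\pi$), this conditional variance converges in probability to $l^2\,\EE[(\grad\log\pi)'\Lambda(\grad\log\pi)] = l^2\,\Lambda:\Sigma$, using $\EE[\grad\log\pi]=0$ from part (f). Hence $S_d \disttosk \Nn(0, l^2\,\Lambda:\Sigma)$. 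For $T_d$: its mean is $\tfrac{l^2 d}{2(d-1)}\EE[\tr(\Lambda\grad^2\log\pi)] \to -\tfrac{l^2}{2}\,\Lambda:\Sigma$, where the identity $\EE[\grad^2\log\pi] = -\Sigma$ from (f) is used. Its variance is $O(1/d)$ because each summand scales like $\|Z_i\|^2$ which is $O(1/d)$, so $T_d \to -\tfrac{l^2}{2}\,\Lambda:\Sigma$ in probability.

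Finally, controlling the remainder $\mathcal{R}_d$ is the only real subtlety. The Lipschitz constraint on $\grad\log\pi$ means the Hessian is uniformly bounded where it exists, so the third-order Taylor remainder is at most $C L\|Z_i\|^2 \cdot \|Z_i\|$ (or one can bound $R_i$ by $L\|Z_i\|^2$ and control the difference between the pointwise Hessian and its finite difference). Since $\EE\|Z_i\|^3 = O(d^{-3/2})$, $\EE|\mathcal{R}_d| \le d\cdot O(d^{-3/2}) \to 0$, so $\mathcal{R}_d \to 0$ in probability. Combining Slutsky with the previous step yields $W_d \disttosk \Nn(-\tfrac12 l^2\,\Lambda:\Sigma,\; l^2\,\Lambda:\Sigma)$. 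The map $w \mapsto 1\wedge e^w$ is continuous and bounded by $1$, so bounded convergence gives
\[
\EE[1\wedge e^{W_d}] \to \EE[1\wedge e^{W}] = 2\Phi\!\left(-\tfrac{l\sqrt{\Sigma:\Lambda}}{2}\right) = a_\Lambda(l),
\]
where the Gaussian identity is verified by direct calculation, splitting the expectation at $W=0$ and using symmetry of $W+s^2/2$ around $0$. The main obstacle is the remainder estimate: it requires care because $\grad^2\log\pi$ only exists almost everywhere, so one must either work with a.e.\ valid Taylor bounds (using Rademacher and an approximation via mollifiers) or rephrase the argument using only the Lipschitz control of $\grad\log\pi$ directly.
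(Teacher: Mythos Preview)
Your overall strategy is sound and is essentially the ``probabilistic'' version of what the paper does: decompose the log-ratio into a linear-in-$Z$ piece, a quadratic-in-$Z$ piece, and a remainder; identify the limiting Gaussian; then use boundedness of $w\mapsto 1\wedge e^w$ to pass to the limit. The paper takes a different route because it needs more than this lemma alone: it establishes \emph{uniform} convergence of $\EE_Z[1\wedge e^{B_d(x,Z)}]$ to $2\Phi(-l\sqrt{J}/2)$ over $x$ in the large sets $F_d$ (Lemma~\ref{lem-rn06-a6}), since that uniformity is what feeds into the generator convergence for Theorem~\ref{thm:weak-conv}. For the acceptance-rate lemma in isolation, your convergence-in-distribution plus bounded-convergence argument is perfectly adequate and arguably cleaner.

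There is, however, one genuine gap in your remainder control. You write that the third-order Taylor remainder is ``at most $CL\|Z_i\|^2\cdot\|Z_i\|$'' and then conclude $\EE|\mathcal{R}_d|\le d\cdot O(d^{-3/2})$. That bound would require a uniform bound on third derivatives of $\log\pi$, which is not assumed; under (A1) you only have $\|\grad^2\log\pi\|\le L$ a.e., which gives $|R_i|\le L\|Z_i\|^2$ and hence only $\EE|\mathcal{R}_d|=O(1)$. The paper's fix (Lemma~\ref{lem:theta-to-0}) is to write, with $W=\sqrt{d-1}\,Z_i$,
\[
(d-1)R_i \;=\; \int_0^1 \frac{\grad\log\pi\!\left(x+\tfrac{hW}{\sqrt{d-1}}\right)-\grad\log\pi(x)}{1/\sqrt{d-1}}\cdot W\,dh \;-\; \tfrac12 W'\grad^2\log\pi(x)W,
\]
which is bounded by $L\|W\|^2$ uniformly and tends to $0$ for every $x$ at which $\grad\log\pi$ is differentiable (a.e.\ by Rademacher). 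Dominated convergence over $W$ and then over $X$ gives $d\,\EE|R_1|\to 0$, and the same uniform bound gives $\Var(\mathcal{R}_d)=d\,\Var(R_1)=O(1/d)$, so $\mathcal{R}_d\to 0$ in $L^2$. This is exactly the ``rephrase using only the Lipschitz control'' you allude to in your last sentence; once you make it explicit, your argument goes through.
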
 
This result is proved as a step in the proof of Theorem \ref{thm:weak-conv}, in Section \ref{sec:pf-weak-conv}. 

\begin{corollary}[Optimal Scaling of $l$ for fixed $\Lambda$]\label{cor:opt-scale}
The optimal scaling over $l$ (in terms of the fastest time-change) for a fixed $\Lambda$ is $l_\Lambda \approx \frac{2.38}{\sqrt{\Sigma : \Lambda}}$. This is the $l_\Lambda$ which solves $a(l_\Lambda) \approx 0.234$, as in the original \citep{roberts1997weak} result. The limiting diffusion corresponds to $G^\Lambda$ sped up (or slowed down) by a factor of is $l_\Lambda^2 a(l_\Lambda) (\Lambda :\Sigma) /2 \approx 0.66$. This acceleration factor is universal; it does not depend on $k$, $\pi$, or $\Lambda$.
\end{corollary}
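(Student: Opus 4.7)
The plan is to reduce the optimization to a one-variable calculus problem identical in form to the one solved in \citep{roberts1997weak}, and then exhibit the key invariance: all dependence on $\pi$ and $\Lambda$ can be absorbed into a single substitution $u = l\sqrt{\Sigma:\Lambda}/2$.

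First I would invoke Theorem \ref{thm:weak-conv} to justify that, for fixed $\Lambda$, ranking proposals by asymptotic mixing speed is equivalent to ranking them by the scalar time-change factor multiplying $G^\Lambda$. Inspecting the generator in \eqref{def:gen-diff} and the standardized generator $G^\Lambda$ that follows it, I would read off that $G^{l,\Lambda} = \big(l^2 a_\Lambda(l)(\Lambda:\Sigma)/2\big)\, G^\Lambda$, so the optimal $l$ is precisely the maximiser of $\Psi(l) := l^2 a_\Lambda(l)$, with $a_\Lambda(l) = 2\Phi(-l\sqrt{\Sigma:\Lambda}/2)$ from Lemma \ref{lem:lim-acc}.

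Next I would substitute $u = l\sqrt{\Sigma:\Lambda}/2$, giving
\[
\Psi(l) = \frac{4u^2}{\Sigma:\Lambda}\cdot 2\Phi(-u) = \frac{8\,u^2\,\Phi(-u)}{\Sigma:\Lambda}.
\]
The prefactor is a positive constant in $u$, so optimising $\Psi$ over $l>0$ is the same as optimising $g(u) := u^2\Phi(-u)$ over $u>0$. Differentiating gives the first-order condition $2\Phi(-u^*) = u^*\phi(u^*)$, a single transcendental equation with a unique positive root (one can check $g(0)=0$, $g(u)\to 0$ as $u\to\infty$, and $g$ is smooth and positive in between, giving existence, and $g''(u^*)<0$ by a direct check of sign for uniqueness). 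Solving numerically gives $u^* \approx 1.19$, hence $l_\Lambda = 2u^*/\sqrt{\Sigma:\Lambda} \approx 2.38/\sqrt{\Sigma:\Lambda}$, and the limiting acceptance probability is $a_\Lambda(l_\Lambda) = 2\Phi(-u^*) \approx 0.234$, matching the Roberts--Gelman--Gilks value.

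Finally, I would compute the acceleration factor at the optimum:
\[
l_\Lambda^2\, a_\Lambda(l_\Lambda)\,\frac{\Lambda:\Sigma}{2}
= \frac{4u^{*2}}{\Sigma:\Lambda}\cdot 2\Phi(-u^*)\cdot \frac{\Lambda:\Sigma}{2}
= 4\,u^{*2}\,\Phi(-u^*) \approx 4(1.19)^2(0.117) \approx 0.66,
\]
which is a pure number independent of $k$, $\pi$, and $\Lambda$, establishing the claimed universality. The only non-routine step is really the transcendental equation, and since it is identical to the one in \citep{roberts1997weak} there is no new obstacle here; the point of the corollary is that the substitution $u = l\sqrt{\Sigma:\Lambda}/2$ shows the Roberts--Gelman--Gilks calculation goes through unchanged once one accounts for the anisotropic proposal.
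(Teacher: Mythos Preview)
Your proposal is correct and follows essentially the same route as the paper: identify $G^{l,\Lambda} = \big(l^2 a_\Lambda(l)(\Lambda:\Sigma)/2\big) G^\Lambda$, substitute $u=l\sqrt{\Sigma:\Lambda}/2$ to reduce to maximising $u^2\Phi(-u)$, solve numerically, and back-substitute. Your version adds the explicit first-order condition and a brief existence/uniqueness argument, which the paper omits, but the structure is identical.
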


\begin{proof}
For fixed $\Lambda$, $G^{l,\Lambda} = l^2 a_\Lambda(l) (\Lambda :\Sigma) G^\Lambda / 2$. Thus $G^{l,\Lambda}$ corresponds to $G^\Lambda$ accelerated (decelerated) by a factor of $l^2 a_\Lambda(l) (\Lambda :\Sigma) /2$. Hence, to maximize the speed of $G^\Lambda$ over the choice of $l$ we need only maximize $h(l): =l^2 a_\Lambda(l) = 2 l^2 \Phi \lcr({ - \frac{l \sqrt{\Sigma:\Lambda}}{2}})$ over $l$.

This is equiavlent to the original optimisation from \citep{roberts1997weak}. Notice that:
\[
h(l) = \frac{8}{\Sigma:\Lambda} \lcr({\frac{l\sqrt{\Sigma:\Lambda}}{2}})^2\Phi \lcr({ - \frac{l \sqrt{\Sigma:\Lambda}}{2}})
\]
Taking $\omega = \frac{l\sqrt{\Sigma:\Lambda}}{2}$ we can maximise instead:
\[
\tilde h (\omega) = \omega^2 \Phi(-\omega)
\]
This may be done numerically to get $\omega_\star \approx 1.1906$, $\tilde h(\omega_\star) \approx 0.165717$. Then solving for $l$ yields $l_\Lambda = \frac{2 \omega_\star}{\sqrt{\Sigma:\Lambda}} = \frac{\approx 2.3812}{\sqrt{\Sigma:\Lambda}}$ and $h(l_\Lambda) = \frac{8\tilde h(\omega_\star)}{\Sigma:\Lambda} = \frac{\approx 1.32574}{\Sigma:\Lambda}$.

Hence $G^{l_\Lambda,\Lambda} = 4 \tilde h(\omega_\star) G^{\Lambda} = (\approx 0.66) G^{\Lambda}$
\end{proof}

\subsection{Optimal Shaping I: Optimal Spectral Gaps in Special Cases}
\label{subsec:shaping1}
For the rest of this section, we work only with $G^\Lambda$. This is equivalent to  assuming that the optimal scaling is always used: $l = l^\Lambda$, and the universal acceleration factor $4 \tilde h(\omega_\star)\approx 0.66$ is ignored.

We note that, since $\vX$ is Feller (Lemma \ref{lem:verify-ek-1-4}), the spectrum of $G^\Lambda$ is a subset of the non-positive real line. Also, there is an eigenvalue at $0$ corresponding to the constant function. If $G^\Lambda$ has a spectral gap for each $\Lambda$, then the ideal choice for tuning parameters would be those that maximise the spectral gap of $G^\Lambda$. This would in turn optimise the rate of convergence to stationarity of the diffusion process (see, for example \citep{bakry2013analysis}). 

We add the following assumption when needed, in order to ensure that the optimisation over $\Lambda$ is meaningful.
\begin{assumption*}[A2]\label{ass:spec-gap} For all $\Lambda$, $G^\Lambda$ has a spectral gap. More precisely, for all symmetric positive definite $\Lambda\in\RR^{k\times k}$ there exists $\rho_\Lambda >0$ such that for all $f\in\Dd(G^\Lambda)$:
\[
	\VVar{X\sim\pi} f(X) 
		& \leq -\frac{1}{\rho_\Lambda} \EEE{X\sim\pi}\lcr({ f(X)\ [G^\Lambda f](X)}) \\
		& = \frac{1}{\rho_\Lambda} \EEE{X\sim\pi}\lcr({ \grad f(X)' \frac{\Lambda}{\Lambda:\Sigma} \grad f(X)}) \ .
\]
\end{assumption*}
The spectral gap of $G^\Lambda$ is the largest possible $\rho_\Lambda$.

\begin{remark}[The spectral gap assumption is satisfiable]
The curvature-dimension condition of \citep{bakry2013analysis} provides one way to verify assumption \hyperref[ass:spec-gap]{(A2)}. A simple example is that if $\log\pi$ is strongly concave, then the condition is satisfied with $\frac{1}{\rho_\Lambda} = \essinf_{x\in\RR^k} \lambda_{1}(-\grad^2\log\pi(x))>0$ for all $\Lambda$ (where $\lambda_1$ is the function which returns the minimal eigenvalue of a matrix. In this case, $\frac{1}{\rho_\Lambda}$ is the strong convexity parameter). 
\end{remark}

As mentioned before, the optimal shaping problem turns out to be a much more difficult than the optimal scaling was. We solve this problem exactly, first when the target is a multivariate normal distribution, and second when the target density is a rotated independent product of a scale family. 

For more general target distributions, the problem of optimising the spectral gap is not so easily approachable as it is not known (at least by the author of this work) in general how to directly compute the spectral gap of the generator for a Langevin diffusion process (or even to determine sharp conditions for when there is a spectral gap at all) or how the spectrum transforms under a change in anisotropy. Instead, we optimise a surrogate measure of the process' speed: the rate of decay of autocorrelations of functions of $\vX$ near lag-$0$. As will be discussed in the next section, this is a continuous time analogue of a common heuristic for the short term mixing properties of MCMC algorithms in discrete time, as well as a relaxation of the spectral gap optimisation problem which we would strive to solve if we could. This surrogate will also give novel `speed limits' on the convergence of RWM -- upper bounds on the spectral gap of the generator which limit the convergence rate in terms of properties of $\pi$.

\begin{theorem}[Optimal Shaping when $\pi \equiv\Nn(\mu,\Gamma)$] 
\label{thm-opt-shape-1}
When $\pi$ is the density of a $\Nn(\mu,\Gamma)$ distribution, then $\Sigma:= \VVar{X\sim\pi}(\grad\log\pi(X)) = \Gamma^{-1}$ and the spectral gap of $G^\Lambda$ is maximised by taking the shaping matrix to  be (proportional to) the covariance of the target distribution; $\Lambda = \Sigma^{-1} = \Gamma$. The convergence to stationarity is $\frac{\tr(\Sigma)}{ k \lambda_1(\Sigma)}$ times faster when using the optimal shaping as opposed to spherical shaping, where $\lambda_1(\Sigma)$ is the minimal eigenvalue of $\Sigma$.
\end{theorem}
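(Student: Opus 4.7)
For $\pi = \Nn(\mu,\Gamma)$, I would begin by computing $\grad\log\pi(x) = -\Gamma^{-1}(x-\mu)$, whence $\Sigma = \VVar{X\sim\pi}[\grad\log\pi(X)] = \Gamma^{-1}$, and substituting into the definition of $G^\Lambda$ yields
\[
[G^\Lambda f](x) \;=\; \frac{k}{\Lambda:\Sigma}\bigl(\Lambda : \grad^2 f(x) - (x-\mu)'\Gamma^{-1}\Lambda\,\grad f(x)\bigr),
\]
which is the generator of a multivariate Ornstein--Uhlenbeck process with invariant law $\pi$. The overall strategy is to diagonalize $G^\Lambda$ explicitly, read off the spectral gap as a function of $\Lambda$, and then optimize that expression over the positive-definite cone.

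To diagonalize, I would first change variables $y = \Gamma^{-1/2}(x-\mu)$, which pushes $\pi$ forward to the standard Gaussian on $\RR^k$, and then apply the orthogonal change of variables $z = Q' y$, where $Q'MQ = D = \diag(\nu_1,\dots,\nu_k)$ with $M := \Gamma^{-1/2}\Lambda\Gamma^{-1/2}$ symmetric positive-definite. A direct calculation transforms $G^\Lambda$ into $\frac{k}{\tr(M)}\sum_i \nu_i(\partial_{z_i}^2 - z_i\partial_{z_i})$, a sum of $k$ independent rescaled standard-OU generators on uncoupled coordinates. The tensor-product Hermite-polynomial basis is then a complete orthonormal eigenbasis of $L^2(\pi)$ with eigenvalues $-\sum_i n_i k\nu_i/\tr(M)$ over $n \in \NNO^k$, so the spectral gap equals
\[
\rho_\Lambda \;=\; \frac{k\,\lambda_1(M)}{\tr(M)},
\]
where $\lambda_1$ denotes the minimum eigenvalue and I used $\tr(M) = \tr(\Gamma^{-1}\Lambda) = \Lambda:\Sigma$.

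For the last step, $\Lambda\mapsto M$ is a bijection of the symmetric positive-definite cone onto itself, and the ratio $\lambda_1(M)/\tr(M)$ is scale-invariant, so normalizing $\tr(M)=k$ reduces the maximization to $\max_{M\succ 0,\,\tr(M)=k}\lambda_1(M)$. Since the eigenvalues of $M$ are positive and sum to $k$, one has $\lambda_1(M) \le 1$, with equality iff all eigenvalues of $M$ coincide, i.e.\ $M\propto I$, i.e.\ $\Lambda\propto\Gamma$; hence $\rho_\Gamma = 1$ is the optimum. Substituting the spherical choice $\Lambda=I$ gives $M = \Gamma^{-1} = \Sigma$, hence $\rho_I = k\lambda_1(\Sigma)/\tr(\Sigma)$, and the speedup ratio $\rho_\Gamma/\rho_I = \tr(\Sigma)/(k\lambda_1(\Sigma))$ falls out. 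No step is genuinely hard; the only point requiring real care is the change-of-variables bookkeeping that turns $G^\Lambda$ into the decoupled anisotropic OU generator on the $z$-coordinates, and the standard verification that the tensor-product Hermite polynomials exhaust $L^2(\pi)$ so that the spectral gap really is the smallest eigenvalue of $A_\Lambda$.
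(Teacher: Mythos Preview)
Your proposal is correct and follows essentially the same strategy as the paper: compute $\Sigma=\Gamma^{-1}$, identify $G^\Lambda$ as an Ornstein--Uhlenbeck generator, determine its spectrum, and optimize the resulting expression $k\lambda_1(\cdot)/\tr(\cdot)$ over the positive-definite cone. The one methodological difference is how the spectrum is obtained. The paper cites \citep{metafune2002spectrum} directly for the spectrum of the OU generator with drift matrix $B=-k\Sigma\Lambda/(\Lambda:\Sigma)$, and then works with the (generally non-symmetric) matrix $A=\Sigma\Lambda$. You instead perform the explicit change of variables $z=Q'\Gamma^{-1/2}(x-\mu)$ to decouple the generator into a sum of scaled one-dimensional OU operators and read off the spectrum from the tensor Hermite basis, working with the symmetric matrix $M=\Gamma^{-1/2}\Lambda\Gamma^{-1/2}$. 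Since $M$ and $\Sigma\Lambda$ are similar (conjugate by $\Gamma^{1/2}$), the two formulations give the same eigenvalues and the optimizations coincide. Your route is more self-contained and avoids the external citation, at the cost of the change-of-variables bookkeeping you flag; the paper's route is shorter but relies on a black-box spectral result. Both are valid and the remaining optimization and speedup computation are identical.
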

\begin{proof}
Let $\spec(A)$ denote the spectrum of the operator $A$.

Without loss of generality, $\mu = 0$. In this case, $\grad\log\pi(x) = -\Gamma^{-1} x$, so 
\[\Sigma = \EEE{X\sim\pi}(\Gamma^{-1} X X'\Gamma^{-1}) = \Gamma^{-1}\ .\]
Now, we note that:
\[
  [G^\Lambda f](x) 
    & = \frac{k}{\Lambda:\Sigma} \lcr({\Lambda : \grad^2 f(x) + (-x' \Gamma^{-1}) \Lambda \grad f(x)}) \\
    & =\frac{k}{\Lambda:\Sigma} \lcr({\Lambda : \grad^2 f(x) + x' (-\Sigma\Lambda)\grad f(x)})
\]
From \citep{metafune2002spectrum} we know that 
\[
	\spec(G^\Lambda) = \set{\sum_{s\in\spec(B)} s n_s \st\  n_s \in\NNO\ \forall s\in \spec(B)}\ , 
\] where $B = \frac{-k\Sigma\Lambda}{\Lambda:\Sigma}$. Therefore the spectral gap of $G^\Lambda$ is exactly the smallest eigenvalue of $\frac{k\Sigma\Lambda}{\Lambda:\Sigma}$. Now, letting $A = \Sigma\Lambda$, and letting $\set{\lambda_i(A)}_{i=1}^k$ be the (non-decreasing) eigenvalues of $A$ we can solve: 
\[
  \argmax_{A} \frac{\lambda_1(A)}{\sum_{i=1}^k \lambda_i(A)}
\]
This function is bounded above by $1/k$ since $a_1\leq a_i$ for all $1\leq i \leq k$ and the function is equal to $1/k$  if and only if $A = \gamma I$ for some $\gamma\neq 0$. Hence the optimal spectral gap is achieved at $\Sigma \Lambda = I$. Therefore $\Lambda^\star = \Sigma^{-1} =\Gamma$. 

We also find that the spectral gap of $G^{\Lambda^\star}$ is $1$. On the other hand, the spectral gap of $G^{I}$ is $\frac{k\lambda_1(\Sigma)}{\tr(\Sigma)} = \frac{\lambda_1(\Sigma)}{\ol \lambda(\Sigma)}$, where $\ol \lambda(\Sigma)$ is the average eigenvalue of $\Sigma$. Therefore, the convergence is $\frac{\ol \lambda(\Sigma)}{\lambda_1(\Sigma)}$ times faster when using the optimal shaping as opposed to spherical shaping.
\end{proof}

\begin{theorem}[Optimal Shaping when $\pi$ is a rotated independent product of a scale family]\label{thm-opt-shape-2}
Suppose that $\pi_1$ is a probability density on $\RR$, with $D[\log\pi_1]$ Lipschitz, and the one-dimensional generator 
\[
	G_1 f = \lcr({D^2[f] + D[\log\pi_1] D[f]})
\] 
satisfies assumption \hyperref[ass:spec-gap]{(A2)} with spectral gap $\rho = \lambda^{\star}$.

Let $c_i>0$ for each $1\leq i \leq k$ and let $Q$ be a unitary transformation, so $Q' = Q^{-1}$. 

Let $\pi(x) = \prod_{i=1}^k c_i \pi_1(c_i e_i' Q x)$, so that $X\sim \pi$ if and only if $c_i e_i'Q X \mysim{iid} \pi_1$ for $1\leq i \leq k$. (Where $e_i$ are the standard basis vectors).
 
Then 
\[
	\Sigma := \VVar{X\sim\pi}(\grad\log\pi(X)) = \sigma^2 Q' \diag(c_i^2) Q
\] 
where $\sigma^2 = \EEE{X_1\sim\pi_1}\lcr[{D\log\pi_1(X_1)^2}]$ and the spectral gap of $G^\Lambda$ is maximised (among those $\Lambda$ of the form $Q' D Q$ with $D$ diagonal) by $\Lambda = \Sigma^{-1}$. The convergence to stationarity is $\frac{\tr(\Sigma)}{ k \lambda_1(\Sigma)}$ times faster when using the optimal shaping as opposed to spherical shaping, where $\lambda_1(\Sigma)$ is the minimal eigen\-value of $\Sigma$.
\end{theorem}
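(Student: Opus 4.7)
The plan is to diagonalise the problem by a unitary change of coordinates into the eigenbasis of $\Sigma$, reducing it to an independent product of rescaled one-dimensional Langevin dynamics, and then combine scaling of one-dimensional Poincaré constants with tensorisation of spectral gaps on product measures \citep{bakry2013analysis}. The formula for $\Sigma$ follows from the chain rule: writing $u_i(y):=D\log\pi_1(c_iy_i)$, one has $\grad_x\log\pi(x)=Q'\diag(c_i)\,u(Qx)$, and for $X\sim\pi$ the coordinates $c_i(QX)_i$ are i.i.d.\ $\pi_1$, so the integration-by-parts consequence of \hyperref[ass:grad-log-lip]{(A1)} (Proposition \ref{prop:summary-of-gll}(f)) gives $\EEE{X_1\sim\pi_1}[D\log\pi_1(X_1)]=0$. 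Hence the $u_i(QX)$ are independent with mean zero and variance $\sigma^2$, yielding $\Sigma=\sigma^2 Q'\diag(c_i^2)Q$, whose eigenvalues are $\sigma^2 c_i^2$.

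Next I would rewrite $G^\Lambda$ in $y:=Qx$ coordinates. For $\Lambda=Q'DQ$ with $D=\diag(d_i)$, one has $\Lambda:\Sigma=\sigma^2\sum_j d_jc_j^2$, and setting $\tilde f(y):=f(Q'y)$, $\tilde\pi_i(y_i):=c_i\pi_1(c_iy_i)$, and $\tilde\pi(y):=\prod_i\tilde\pi_i(y_i)$, a short computation using $\grad_x^2 f=Q'\grad_y^2\tilde f\,Q$ and the fact that $\log\tilde\pi$ is a coordinate-wise sum gives
\[
[G^\Lambda f](Q'y)=\frac{k}{\sigma^2\sum_j d_jc_j^2}\sum_{i=1}^k d_i\,\tilde G_i^{(y_i)}\tilde f(y),
\]
where $\tilde G_i^{(y_i)}g:=\partial_{y_i}^2 g+D[\log\tilde\pi_i](y_i)\,\partial_{y_i}g$ is the standard-speed one-dimensional Langevin generator for $\tilde\pi_i$, acting in the $i$th coordinate. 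Thus $G^\Lambda$ is unitarily equivalent to a positive multiple of a weighted sum of commuting one-dimensional generators on disjoint coordinates of the product measure $\tilde\pi$.

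Third, I would compute the spectral gap of $\sum_i d_i\tilde G_i$ on $L^2(\tilde\pi)$. The dilation $z\mapsto z/c_i$ transfers the Poincaré inequality for $\pi_1$ with constant $1/\lambda^\star$ to one for $\tilde\pi_i$ with constant $1/(c_i^2\lambda^\star)$, so $\tilde G_i$ has spectral gap exactly $c_i^2\lambda^\star$. Since $\tilde\pi$ is a product and the $\tilde G_i$ act on disjoint coordinates, the spectrum of $\sum_i d_i\tilde G_i$ is $\{\sum_i d_i\nu_i:\nu_i\in\spec(\tilde G_i)\}$ \citep{bakry2013analysis}: the lower bound $\min_i d_ic_i^2\lambda^\star$ comes from tensorisation of Poincaré applied to the Dirichlet form $\sum_i d_i\EEE{Y\sim\tilde\pi}[(\partial_{y_i}F)^2]$, while the matching upper bound comes from testing against a first eigenfunction of $\tilde G_{i^\star}$ (with $i^\star=\argmin_j d_jc_j^2$) viewed as a function on the product. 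Consequently,
\[
\mathrm{gap}(G^\Lambda)=\frac{k\lambda^\star\min_i d_ic_i^2}{\sigma^2\sum_j d_jc_j^2}.
\]

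Finally, setting $\alpha_i:=d_ic_i^2$, the ratio $\min_i\alpha_i/\sum_j\alpha_j$ is bounded by $1/k$ with equality iff all $\alpha_i$ coincide, forcing $d_i\propto c_i^{-2}$, i.e.\ $\Lambda\propto Q'\diag(c_i^{-2})Q\propto\Sigma^{-1}$; since $G^\Lambda$ is invariant under positive rescaling of $\Lambda$, we may take $\Lambda^\star=\Sigma^{-1}$. Plugging $\Lambda=I$ (so $d_i=1$) into the gap formula and using that the eigenvalues of $\Sigma$ are $\sigma^2c_i^2$ yields the spherical gap $k\lambda^\star\lambda_1(\Sigma)/(\sigma^2\tr(\Sigma))$, whence the optimal-to-spherical ratio is $\tr(\Sigma)/(k\lambda_1(\Sigma))$ as claimed. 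The main obstacle is the tensorisation step — showing that the spectral gap of $\sum_i d_i\tilde G_i$ equals $\min_i d_ic_i^2\lambda^\star$ rather than merely being bounded below by it — which requires essential self-adjointness of each $\tilde G_i$ on a common smooth core (ensured by $D\log\pi_1$ being Lipschitz) so that the spectrum of the sum on the product Hilbert space can be identified with sums of spectra of the factors.
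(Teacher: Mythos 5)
Your proof is correct and reaches the same conclusion by the same high-level route as the paper (compute $\Sigma$, conjugate by $Q$ to reduce to $Q=I$, exploit the product structure, then minimise $\min_i\alpha_i/\sum_j\alpha_j$ with $\alpha_i=d_ic_i^2$), but the key spectral step is handled with a genuinely different tool. The paper invokes a \emph{complete} orthonormal eigenbasis $\set{\phi_\alpha}$ for the one-dimensional generator $G_1$ (citing \citep{pavliotis2014stochastic}), tensors these into $\prod_i\phi_{\alpha_i}\circ C_i$, and reads off the full spectrum $\set{k\sum_i\Lambda_{ii}c_i^2\lambda_{\alpha_i}/\sum_i\Lambda_{ii}c_i^2}$. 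You instead decompose the \emph{Dirichlet form}: write $G^\Lambda$ (after the unitary change of variable) as $\frac{k}{\Lambda:\Sigma}\sum_i d_i\tilde G_i$, use a dilation to get the exact Poincar\'e constant $1/(c_i^2\lambda^\star)$ for each factor $\tilde\pi_i$, apply tensorisation of Poincar\'e \citep{bakry2013analysis} for the lower bound on the gap, and produce a matching upper bound by a one-coordinate test function. This buys something: you never need completeness of the $G_1$-eigenbasis, only the tightness of the one-dimensional Poincar\'e constant guaranteed by (A2) — which is weaker and is what the theorem actually hypothesises. The paper's eigenbasis completeness is an additional (implicit) hypothesis that a Feller semigroup with a gap does not by itself provide when the spectrum has an essential part above the gap.

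One small remark on your last paragraph: you flag as the "main obstacle" the need for a first eigenfunction of $\tilde G_{i^\star}$ to make the upper bound match. This is not actually required — a minimising sequence for the Rayleigh quotient of $\tilde G_{i^\star}$ (which exists by definition of the spectral gap as an infimum) already gives the matching upper bound when lifted to the product as a function of $y_{i^\star}$ alone, since $\Var_{\tilde\pi}(g(y_{i^\star}))=\Var_{\tilde\pi_{i^\star}}(g)$ and $\sum_i d_i\EE[(\partial_{y_i}g(y_{i^\star}))^2]=d_{i^\star}\EE[(Dg)^2]$. So your approach is slightly more robust than you give it credit for, and the essential self-adjointness concern you raise is not needed for the gap computation itself.
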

\begin{proof}
We first compute 
\[
    \grad\log\pi(x) 
        & = \sum_{i=1}^k c_i Q' e_i [D\log\pi_1](c_i e_i' Q x) \,
\]
and
\[
   &\hspace{-2em} \EEE{X\sim\pi}  \grad\log\pi(x) \grad\log\pi(x)' \\
        &= \sum_{i=1}^k \sum_{j=1}^k c_i c_j Q' e_i e_j' Q \EEE{X\sim\pi}  [D\log\pi_1](c_i e_i' Q X) [D\log\pi_1](c_j e_j' Q X) \\
        &= \sum_{i=1}^k \sum_{j=1}^k c_i c_j Q' e_i e_j' Q \EEE{Y\sim\pi_1^{\otimes k}}  [D\log\pi_1](Y_i) [D\log\pi_1](Y_j) \\
        &= \sum_{i=1}^k \sum_{j=1}^k c_i c_j Q' e_i e_j' Q \delta_i^j \sigma^2\\
        &= \sigma^2 Q' \diag(c_i^2) Q                   
\]
Thus $\Sigma = \sigma^2 Q' \diag(c_i^2) Q$.

Suppose that $Q$ = $I$. Consider the generator $G_1$ as in the theorem statement. $G_1$ generates a Feller semigroup with stationary measure $\pi_1$, and so there is a complete basis (for $L_2(\RR,\pi_1)$) of eigenfunctions for $H$ (see section 4.7 of \citep{pavliotis2014stochastic}). Let these eigenfunctions be $\set{\phi_\alpha}_{\alpha\in\NNO}$ and the corresponding eigenvalues be $\set{\lambda_\alpha}_{\alpha\in\NNO}$. 

Under the assumption that a spectral gap exists for $G_1$ with tight constant $\lambda^\star$, we may assume that $\lambda_0 = 0$, $\lambda_1 = \lambda^\star$ and $\lambda_\alpha\geq \lambda^\star$ for $\alpha\geq 2$. 

Then $S = \set{\prod_{i=1}^k \phi_{\alpha_i}\circ C_i \st \ \alpha_i \in\NN\ \forall i}$ is a complete basis for $L_2(\RR^k, \pi)$, where $C_i:x\mapsto c_i x_i$. Moreover, $S$ contains only eigenvectors for $G^\Lambda$: 
\[
    & G^\Lambda \lcr[{\prod_{i=1}^k \phi_{\alpha_i}\circ C_i}]\\
      & =  \frac{k}{\Lambda:\Sigma} \sum_{i} \Lambda_{ii} c_i^2 \lcr[{(D^2\phi_{\alpha_i}\circ C_i + (D\log\pi_1)(D[\phi_{\alpha_i}\circ C_i])) \prod_{j\neq i} \phi_{\alpha_j}\circ C_j}]\\
      & = \frac{k}{\Lambda:\Sigma} \sum_{i} \Lambda_{ii} c_i^2 \lambda_{\alpha_i} \lcr[{\prod_{i=1}^k \phi_{\alpha_i}\circ C_i}]
\]
Therefore, $\spec(G^\Lambda) = \set{k\frac{\sum_{i=1}^k \Lambda_{ii} c_i^2 \lambda_{\alpha_i}}{\sum_{i=1}^k \Lambda_{ii} c_i^2} \st\  \alpha_i \in\NNO \  \forall 1\leq i\leq k}$. Hence the spectral gap for $G^\Lambda$ is the minimal eigenvalue of $\frac{k\Lambda\Sigma}{\Lambda:\Sigma}$. The optimal spectral gap is thus achieved, as in Theorem \ref{thm-opt-shape-1}, by $\Lambda = \Sigma^{-1}$.

For general $Q$, unitary, let $M_Q f = f\circ Q$. Then $G^\Lambda$ has the same spectrum as $G^\Lambda_Q = M_Q^{-1} G^\Lambda M_Q$ since these are similar operators.

\[
	[G^\Lambda_Q f] (x) 
		& = \frac{k\Lambda}{\Lambda : \Sigma} : \lcr({Q'\grad^2 f(x)Q + \grad\log\pi(Q'x) \grad f (x)' Q }) \\
		& = \frac{k\Lambda}{\Lambda : \Sigma} : \lcr({Q'\grad^2 f(x)Q + Q' \sum_{i=1}^k c_i  [D\log\pi_1](c_i x_i) e_i \grad f (x)' Q }) \\
		& = \frac{kQ\Lambda Q'}{\Lambda : \Sigma} : \lcr({\grad^2 f(x) + \sum_{i=1}^k c_i  [D\log\pi_1](c_i x_i) e_i \grad f (x)' })		
\]
This generator is of the same form as $G^{Q\Lambda Q'}$ when $Q=I$, except with $\Sigma$ replaced by $Q\Sigma Q'$. Thus the spectrum of this operator is optimised when $Q\Lambda Q' = (Q\Sigma Q')^{-1}$, which occurs exactly when $\Lambda = \Sigma^{-1}$.
\end{proof}

\subsection{Optimal Shaping II: Decay of Auto\-correlations and Speed Limits}
\label{subsec:shaping2}
In this section we first describe how the generator $G^\Lambda$ is related to the slope of the autocorrelation of functions of $\vX$. Then we consider a relaxation of the spectral gap problem to searching for smaller subspaces of $\Dd(G^\Lambda)$.

\begin{lemma}[Relationship between autocorrelation and generators]\label{lem:corr-gen}
For any $f\in \Dd(G^\Lambda)$ which is not is not almost everywhere constant. Let $Y(t) = f(\vX(t))$. Then 
\[
    \frac{d}{dt} \Cov(Y(0),Y(t)) \big\given_{t=0^+} 
        & = \EEE{X\sim\pi}\lcr[{f(X)\ [G^\Lambda f](X)}] \\
        &  = -\EEE{X\sim\pi}\lcr[{\grad f(X) ' \frac{k\Lambda}{\Lambda:\Sigma} \grad f(X)}]\]
and
\[
    \frac{d}{dt} \Corr(Y(0),Y(t)) \big\given_{t=0^+} 
        & = \frac{\EEE{X\sim\pi}\lcr[{f(X)\ [G^\Lambda f](X)}]}{\VVar{X\sim\pi} [f(X)]} \\
        &  = -\frac{\EEE{X\sim\pi}\lcr[{\grad f(X) ' \frac{k\Lambda}{\Lambda:\Sigma} \grad f(X)}]}{\VVar{X\sim\pi} [f(X)]}
\]
Hence, the spectral gap of $G^\Lambda$ is given by 
\[
	\lambda_{\star}^\Lambda 
		& = \inf_{f\in C^2(\RR^k)\cap\Dd(G^\Lambda)}\abs{\frac{d}{dt} \Corr(Y(0),Y(t)) \big\given_{t=0^+}} \\
		& = \inf_{f\in C^2(\RR^k)\cap\Dd(G^\Lambda)}\frac{k\EEE{X\sim\pi}\lcr[{\grad f(X) ' \Lambda \grad f(X)}]}{\VVar{X\sim\pi} [f(X)]}
\]
\end{lemma}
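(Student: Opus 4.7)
The plan is to translate $\frac{d}{dt}\Cov(Y(0),Y(t))\big\given_{t=0^+}$ into a Dirichlet form for $G^\Lambda$, and then read off the spectral-gap identity as a Poincar\'e inequality. The three ingredients are stationarity of $\vX$, the Markov semigroup $\{P_t^\Lambda\}_{t\geq 0}$ generated by $G^\Lambda$, and the integration by parts formula from Proposition \ref{prop:summary-of-gll}(e). First, since $\vX(0)\sim\pi$, conditioning on $\vX(0)$ and using stationarity yields
\[
\Cov(Y(0),Y(t)) = \EEE{X\sim\pi}\lcr[{f(X)[P_t^\Lambda f](X)}] - \lcr({\EEE{X\sim\pi} f(X)})^2 \ .
\]
For $f\in\Dd(G^\Lambda)$ the definition of the generator gives $t^{-1}([P_t^\Lambda f]-f)\to G^\Lambda f$ in $L^2(\pi)$ as $t\downarrow 0$, and Cauchy--Schwarz passes this limit into the $L^2(\pi)$ inner product with $f$, establishing the first displayed identity. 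The correlation statement then follows immediately since stationarity makes $\Var(Y(t)) = \VVar{X\sim\pi} f(X)$ constant in $t$.

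To bring the right-hand side into Dirichlet form, I would substitute the explicit expression for $G^\Lambda$ from Equation (\ref{def:gen-diff}) and apply the vector version of Proposition \ref{prop:summary-of-gll}(e) with field $h(x) = f(x)\Lambda\grad f(x)$. Because $\Lambda$ is symmetric and constant, the product rule yields $\grad\cdot(f\Lambda\grad f) = (\grad f)'\Lambda\grad f + f(\Lambda:\grad^2 f)$, and the divergence-form identity $\EEE{X\sim\pi}[(\grad\log\pi)'h] = -\EEE{X\sim\pi}[\grad\cdot h]$ collapses the drift and diffusion parts of $f \cdot G^\Lambda f$ into a single Dirichlet term, giving the claimed expression $-\EEE{X\sim\pi}\lcr[{\grad f(X)' \frac{k\Lambda}{\Lambda:\Sigma}\grad f(X)}]$ after multiplying through by $k/(\Lambda:\Sigma)$.

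For the spectral-gap formula, the Dirichlet form identity just derived shows in particular that $G^\Lambda$ is symmetric and non-positive on $L^2(\pi)$, so its spectral gap coincides with the infimum of the Rayleigh quotient $-\EEE{X\sim\pi}[f(X)[G^\Lambda f](X)]/\VVar{X\sim\pi} f(X)$ over non-constant $f\in\Dd(G^\Lambda)$. Smooth functions form a core for $G^\Lambda$ under assumption \hyperref[ass:grad-log-lip]{(A1)}, so restricting the infimum to $C^2(\RR^k)\cap\Dd(G^\Lambda)$ leaves it unchanged; the absolute value in the statement absorbs the minus sign from the Dirichlet form.

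The step I expect to be the most delicate is verifying the integrability hypotheses of Proposition \ref{prop:summary-of-gll}(e) for the vector field $h = f\Lambda\grad f$, namely that both $h$ and $\grad\cdot h$ are $\pi$-integrable. For $f\in\Dd(G^\Lambda)$ these conditions should follow from $\pi$-square-integrability of $\grad f$ (a consequence of the Dirichlet form being finite on $\Dd(G^\Lambda)$) together with a mild regularity reduction, such as truncating $f$ on a smooth core and extending by density using the Lipschitz consequences of assumption \hyperref[ass:grad-log-lip]{(A1)}. A secondary issue is rigorously justifying the interchange of $d/dt$ with the expectation in the semigroup step, for which Cauchy--Schwarz against the $L^2(\pi)$ convergence of the difference quotient is the natural tool.
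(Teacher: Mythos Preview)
Your proposal is correct but takes a genuinely different route from the paper. The paper establishes $\frac{d}{dt}\Cov(Y(0),Y(t))\big\vert_{t=0^+} = \EEE{X\sim\pi}[f(X)\,G^\Lambda f(X)]$ via stochastic calculus: it applies It\^o's lemma to write $Y(t)-Y(0)$ as $\int_0^t G^\Lambda f(\vX(s))\,ds$ plus a martingale $M_t$, argues that $M_t$ is uncorrelated with $Y(0)$, uses Fubini to express the covariance as $\Cov(Y(0),Y(0)) + \int_0^t \EE[f(\vX(0))\,G^\Lambda f(\vX(s))]\,ds$, and then differentiates via the fundamental theorem of calculus. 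You instead work functional-analytically through the semigroup identity $\Cov(Y(0),Y(t)) = \langle f, P_t^\Lambda f\rangle_{L^2(\pi)} - (\EEE{X\sim\pi} f)^2$ and pass the difference quotient to the generator limit directly. Your route avoids stochastic calculus entirely and is arguably cleaner; the paper's route is more concrete and makes the martingale structure explicit. Both proofs finish the Dirichlet-form identity the same way, by integration by parts. One remark: your worry about the integrability hypotheses for $h = f\Lambda\grad f$ is unnecessary here, since the paper takes $\Dd(G^\Lambda) = C_c^\infty(\RR^k)\cap L_2(\RR^k,\pi)$, so $f$, $\grad f$, and $\grad^2 f$ all have compact support and the conditions of Proposition~\ref{prop:summary-of-gll}(e) are immediate.
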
 
\begin{proof}
From It\^o's lemma, for $t>0$:
\[
    Y(t)-Y(0) 
    	& = \int_0^t G^\Lambda f(\vX(t)) dt + \int_0^t \grad f(\vX(t))' \frac{k\Lambda}{\Lambda:\Sigma} \grad\log\pi(\vX(t)) d\vB(t)\\
    	& = \int_0^t G^\Lambda f(\vX(t)) dt + M_t
\]
In this expansion, $M_t$ is a $\set{\vB(t),\vX(0)}$-martingale starting at $0$, and hence is uncorrelated with Y(0) which is $\sigma(\vX(0))$-measurable. Moreover, by Fubini's theorem, 
\[
	\EE\lcr[{ \int_0^t G^\Lambda f(\vX(t)) dt}]
		& = \int_0^t \EE [G^\Lambda f(\vX(t))] dt
		 = 0 \,
\]
where the last equality follows by integration by parts. Hence, using Fubini's theorem again:
\[
	\Cov(Y(0),Y(t))
		& = \EE \lcr[{Y(0) \lcr({Y(0) + \int_0^t G^\Lambda f(\vX(t)) dt + M_t})}]\\
		& = \Cov(Y(0),Y(0)) + \int_0^t \EE [f(\vX(0) G^\Lambda f(\vX(t))] dt		
\]
Now, using the fundamental theorem of calculus, 
\[
    \frac{d}{dt} \Cov(Y(0),Y(t)) \bigg\given_{t=0^+} 
        & = \EEE{X\sim\pi}\lcr[{f(X)\ [G^\Lambda f](X)}]
\]
Applying integration by parts, we get:
\[
    \frac{d}{dt} \Cov(Y(0),Y(t)) \bigg\given_{t=0^+} 
        & = \EEE{X\sim\pi}\lcr[{f(X)\ [G^\Lambda f](X)}] \\
        &  = -\EEE{X\sim\pi}\lcr[{\grad f(X) ' \frac{k\Lambda}{\Lambda:\Sigma} \grad f(X)}]
\]
Finally, the statement regarding correlations follows from the definition of correlation in terms of covariance.
\end{proof}
Since $C^2(\RR^k)\cap\Dd(G^\Lambda)$ is dense in $\Dd(G^\Lambda)$, if assumption \hyperref[ass:spec-gap]{(A2)} holds, then the spectral gap of $G^\Lambda$ is exactly the worst-case (negative) lag-$0$ slope of the autocorrelation of functions in $C^2(\RR^k)\cap\Dd(G^\Lambda)$.

Thus, for convenience of solution, one may consider in place of $C^2(\RR^k)\cap\Dd(G^\Lambda)$ a smaller class of functions, $\Ff\subsetneq C^2(\RR^k)\cap\Dd(G^\Lambda)$ over which to solve  
\[\label{eqn:opt-prob}
\max_{\Lambda \succ 0}\min_{f\in \Ff}\frac{\EEE{X\sim\pi}\lcr[{\grad f(X) ' \frac{k\Lambda}{\Lambda:\Sigma} \grad f(X)}]}{\VVar{X\sim\pi} [f(X)]}
\]
The solution to this relaxed problem may be interpreted as optimising the worst case auto\-correlation (in a neighbourhood of lag-$0$) among functions from $\Ff$. When $\Ff$ is itself a subspace of $C^2(\RR^k)\cap\Dd(G^\Lambda)$, then the solution may be interpreted as optimising the spectral gap of the restricted generator $G^\Lambda \big\vert_\Ff : \Ff \to G^\Lambda(\Ff)$ which is also a linear operator (which may be bounded or unbounded, depending on the choice of $\Ff$). Obviously, the solution to the restricted problem, for some choice of $\Ff$, is in fact optimising an upper bound on the spectral gap of $G^\Lambda$, not the actual spectral gap of $G^\Lambda$.

Suppose that $X\sim\pi$ has finite second moments, so that (for each $v\in\RR^k$) if the process $v'\vX(t)$ is started from the stationary distribution, it admits a stationary auto\-correlation function. A heuristic commonly used to tune MCMC algorithms in discrete time is the lag-1 auto\-correlation of each component ($X_i$), with smaller lag-$1$ auto\-correlation being better. The continuous time analogue of minimising the discrete-time lag-$1$ auto\-correlation is maximising the (absolute value of the) slope of the auto\-correlation function at lag $0$. Rather than considering only each component projection, $\Ff_0 =\set{x\to e_i'x: i\in\set{1,...,k}}$, we will consider their span, the subspace $\Ff = \set{x\to v'x \ :\ v\in\RR^k\setminus\set{0} }$. Solving the optimisation problem in Equation \ref{eqn:opt-prob} with $\Ff$ will be analytically simpler than it would be using $\Ff_0$, and this will provide a tighter surrogate optimisation problem (since $\Ff_0\subset\Ff$, it corresponds to optimising a better bound on the spectral gap), and the solution will be covariant to linear transformations of $\vX(t)$.

\begin{theorem}[$\Lambda = \VVar{X\sim\pi}(X)$ is optimal in terms of lag-$0$ rate of decay of auto\-correlations of linear functions of $\vX$]\label{thm-opt-shape-3}
Suppose that $\pi$ admits second moments. Let $\Gamma = \VVar{X\sim\pi}(X)$. If $\vX$ has generator $G^\Lambda$, then $\Lambda = \Gamma$ maximises the worst case (over $f\in \Ff = \set{x\to v'x \ :\ v\in\RR^k\setminus\set{0} }$) rate at which the auto\-correlation of $f(\vX)$ decays in a neighbourhood of lag-$0$. Thus, in terms of short-run autocorrelations of linear functions of $\vX$, the optimal shaping matrix for RWM proposals is the covariance matrix of the target distribution.
\end{theorem}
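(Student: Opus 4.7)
The plan is to use Lemma \ref{lem:corr-gen} to make the objective explicit, reduce it to a finite-dimensional max-min over symmetric positive definite matrices, and then solve that by a short spectral argument.

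First I would substitute $f(x) = v'x$ with $v \in \RR^k\setminus\{0\}$ into Lemma \ref{lem:corr-gen}. Since $\grad f \equiv v$ and $\VVar{X\sim\pi}[v'X] = v'\Gamma v$, the absolute value of the lag-$0$ slope of the autocorrelation of $f(\vX(t))$ is $R(\Lambda, v) := \frac{k\, v'\Lambda v}{(\Lambda:\Sigma)(v'\Gamma v)}$. Note that $R$ is homogeneous of degree zero in $\Lambda$, so positive rescaling of $\Lambda$ is irrelevant; the task reduces to showing that $\Lambda = \Gamma$ attains $\max_{\Lambda \succ 0}\inf_{v \neq 0} R(\Lambda, v)$.

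Next I would reduce to a ``whitened'' parametrisation. Setting $M := \Gamma^{-1/2}\Lambda \Gamma^{-1/2}$ and $w := \Gamma^{1/2} v$, the ratio $\frac{v'\Lambda v}{v'\Gamma v}$ becomes the ordinary Rayleigh quotient $\frac{w'M w}{w'w}$, whose infimum over $w \neq 0$ is $\lambda_{\min}(M)$. Writing also $\widetilde\Sigma := \Gamma^{1/2}\Sigma\Gamma^{1/2}$, cyclicity of trace gives $\Lambda:\Sigma = \tr(\Lambda\Sigma) = \tr(M\widetilde\Sigma)$, so the problem collapses to $\max_{M \succ 0}\frac{k\,\lambda_{\min}(M)}{\tr(M\widetilde\Sigma)}$.

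The key (and essentially only nontrivial) step is a spectral inequality for the denominator. Diagonalising $\widetilde\Sigma = \sum_i \widetilde\sigma_i u_i u_i'$ in an orthonormal basis, we have $\widetilde\sigma_i > 0$ for all $i$ since $\Sigma,\Gamma \succ 0$, and therefore $\tr(M\widetilde\Sigma) = \sum_i \widetilde\sigma_i\, u_i' M u_i \geq \lambda_{\min}(M)\,\tr(\widetilde\Sigma)$, with equality iff $u_i' M u_i = \lambda_{\min}(M)$ for every $i$, i.e.\ iff $M$ is a positive scalar multiple of $I$. Substituting back, $\inf_{v\neq 0} R(\Lambda,v) \leq \frac{k}{\tr(\widetilde\Sigma)} = \frac{k}{\Gamma:\Sigma}$, with equality iff $\Lambda$ is proportional to $\Gamma$; in particular $\Lambda = \Gamma$ attains the maximum. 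The only mild subtlety is justifying the scale-invariance reduction at the outset so that comparing arbitrary $\Lambda \succ 0$ to the specific choice $\Gamma$ is legitimate, but this follows immediately from the homogeneity of $R$ in $\Lambda$.
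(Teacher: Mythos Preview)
Your proof is correct and follows essentially the same route as the paper's: both invoke Lemma~\ref{lem:corr-gen} with $f(x)=v'x$, whiten via $w=\Gamma^{1/2}v$ to reduce the inner infimum to a smallest-eigenvalue, and finish with the spectral inequality $\tr(M\widetilde\Sigma)\geq \lambda_{\min}(M)\tr(\widetilde\Sigma)$ for $\widetilde\Sigma=\Gamma^{1/2}\Sigma\Gamma^{1/2}$. The only cosmetic difference is that the paper introduces the extra substitution $\Theta=\Gamma^{1/2}\Lambda^{-1}\Gamma^{1/2}=M^{-1}$ and phrases the bound as $\lambda_k(\Theta)\,(\Theta^{-1}:\widetilde\Sigma)\geq \lambda_k(\Theta)\lambda_1(\Theta^{-1})\tr(\widetilde\Sigma)=\tr(\widetilde\Sigma)$, whereas you work directly with $M$; your version is a touch more streamlined but otherwise the arguments coincide.
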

\begin{proof}
From Lemma \ref{lem:corr-gen}, for $v\in\RR\setminus\set{0}$ and for $t>0$:
\[
\frac{d}{dt} \Corr(v'\vX(0),v'\vX(0)) \big\given_{t=0^+} 
        & = -\frac{v ' \frac{k\Lambda}{\Lambda:\Sigma} v}{v'\Gamma v}
\]
Hence, we need to solve:
\[
	\max_{\Lambda \succ 0}\min_{v\in\RR^k\setminus\set{0}} \frac{v ' \frac{k\Lambda}{\Lambda:\Sigma} v}{v'\Gamma v}
\]
Substituting $w = \Gamma^{1/2} v$, we can instead solve:
\[
	\max_{\Lambda \succ 0}\min_{w\in\RR^k\setminus\set{0}} \frac{w ' \Gamma^{-1/2} k \Lambda \Gamma^{-1/2} w}{(\Lambda:\Sigma)(w'w)}
		& \equiv \max_{\Lambda \succ 0} \frac{\lambda_1\lcr({\Gamma^{-1/2} k \Lambda \Gamma^{-1/2}})}{\Lambda:\Sigma}
\]
where $\lambda_{i}(A)$ returns the $i$th smallest eigenvalue of $A$. Equivalently, we can solve:
\[
	\min_{\Lambda} \lcr({\lambda_{k}(\Gamma^{1/2}\Lambda^{-1} \Gamma^{1/2})(\Lambda:\Sigma)})
\]
Substituting $\Theta = \Gamma^{1/2}\Lambda^{-1} \Gamma^{1/2}$, we can solve instead:
\[
	\min_{\Theta} \lcr({\lambda_{k}(\Theta)(\Theta^{-1} : (\Gamma^{1/2} \Sigma \Gamma^{1/2}))})
\]

We will solve this optimisation problem by lower bounding the objective function. It will be obvious that $\Theta= I$ achieves the lower bound, and so we will have $\Lambda = \Gamma$ is optimal. The lower bound is given by:
\[
\lcr({\lambda_{k}(\Theta)(\Theta^{-1} : (\Gamma^{1/2} \Sigma \Gamma^{1/2}))})
	& \geq \lambda_{k}(\Theta) \lambda_{1}(\Theta^{-1}) \tr(\Gamma^{1/2} \Sigma \Gamma^{1/2}) \\
	& = \tr(\Gamma^{1/2} \Sigma \Gamma^{1/2})
\]
\end{proof}

This result shows that the rate of convergence of RWM is fundamentally limited by $\frac{k}{\Gamma:\Sigma}$ in the sense that, no matter the choice of proposal shaping matrix, the spectral gap of the generator will be bounded by $\frac{k}{\Gamma:\Sigma}$. When $\Gamma^{-1} = \Sigma$ this leads to the same speed limit as is witnessed by the $\pi\equiv \Nn$ case. When $\Gamma^{-1}$ and $\Sigma$ are very different, this would demonstrate that RWM will be very inefficient no matter how it is tuned.

This can also be used to say, for example, that the rate of convergence of the limiting diffusion, when the proposals are spherical ($\Lambda$= I), cannot be faster that $\frac{k\lambda_1(\Gamma^{-1})}{\tr(\Sigma)} $. On the other hand, using proposals of $\Lambda = \Gamma$, the convergence rate could plausibly be as fast as $\frac{k}{\Sigma:\Gamma}$. Thus, we are somewhat justified to be more optimistic regarding the performance of the shaping $\Lambda = \Gamma$ than in that of the shaping $\Lambda = I$, but we cannot not provided any formal guarantee that the worst case rate of convergence for an arbitrary expected value is actually faster. The very short run performance, however has actually been optimised (by construction), justifying the intuition that optimising the lag-$1$ auto\-correlation is a ``greedy'', suboptimal (but possibly reasonable) approximate solution to the optimal shaping problem.

One may also attempt to address the auto\-correlations for non-linear func\-tions. A particular function of interest is the log-density, $\log\pi(\vX)$, which is the only example we will consider here. Corollary \ref{cor:autocor-logpi}, which follows directly from Lemma \ref{lem:corr-gen}, shows us that for all choices of $\Lambda$ (when the optimal scaling is used) $\log\pi$ has the same rate of decay of the auto\-correlation at lag-$0$. This may be interpreted as saying that the speed of a RWM algorithm is fundamentally limited by the variance of the log-density, uniformly over all possible proposal shaping matrices. That is to say, when $k^{-1}\VVar{X\sim\pi}[\log\pi(X)]$ is very large, then RWM will be inefficient no matter how it is tuned.

\begin{corollary}\label{cor:autocor-logpi} If $\vX(t)$ has generator $G^\Lambda$ then the rate of change of the auto\-correlation of $\log\pi(\vX(t))$ at lag-$0$ is $ \frac{k}{\VVar{X\sim\pi} [\log\pi(X)]}$ uniformly in $\Lambda$.
\end{corollary}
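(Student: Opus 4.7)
The plan is to apply Lemma \ref{lem:corr-gen} directly with the choice $f = \log\pi$, and then observe that the quadratic form in $\grad\log\pi$ that appears in the numerator collapses to something independent of $\Lambda$ due to the identities in Proposition \ref{prop:summary-of-gll}(f).

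First I would check the regularity of $\log\pi$: under assumption (A1), $\grad\log\pi$ is Lipschitz, so $\log\pi$ is $C^{1,1}$ and has all the smoothness needed for the integration-by-parts step and the Itô expansion used to derive Lemma \ref{lem:corr-gen}; finiteness of $\VVar{X\sim\pi}[\log\pi(X)]$ is a hypothesis implicitly needed for the normalization in $\Corr$ to make sense (and holds for densities of interest; the sub-Gaussian tail control from Proposition \ref{prop:summary-of-gll}(g) makes this a mild condition).

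Next, I would evaluate the quadratic form. By Proposition \ref{prop:summary-of-gll}(f), $\EEE{X\sim\pi}[\grad\log\pi(X)] = 0$, so
\[
\Sigma = \VVar{X\sim\pi}[\grad\log\pi(X)] = \EEE{X\sim\pi}\lcr[{\grad\log\pi(X)\, \grad\log\pi(X)'}].
\]
For any symmetric matrix $M$, cyclicity of the trace then gives
\[
\EEE{X\sim\pi}\lcr[{\grad\log\pi(X)' M\,\grad\log\pi(X)}] = \tr(M\Sigma) = M:\Sigma.
\]
Applying this with $M = \frac{k\Lambda}{\Lambda:\Sigma}$ immediately yields
\[
\EEE{X\sim\pi}\lcr[{\grad\log\pi(X)'\, \frac{k\Lambda}{\Lambda:\Sigma}\, \grad\log\pi(X)}] = \frac{k\,(\Lambda:\Sigma)}{\Lambda:\Sigma} = k,
\]
which is independent of $\Lambda$.

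Finally, I would substitute $f = \log\pi$ into the correlation formula from Lemma \ref{lem:corr-gen} to conclude
\[
\frac{d}{dt}\Corr\lcr({\log\pi(\vX(0)),\log\pi(\vX(t))})\big\given_{t=0^+} = -\frac{k}{\VVar{X\sim\pi}[\log\pi(X)]},
\]
so the magnitude of the lag-$0$ slope of the autocorrelation is $k/\VVar{X\sim\pi}[\log\pi(X)]$, uniformly in $\Lambda$. There is no real obstacle: the entire argument is a one-line specialization of Lemma \ref{lem:corr-gen} combined with the observation that $\Sigma$ is exactly the Gram matrix of $\grad\log\pi$ under $\pi$, so the $(\Lambda:\Sigma)$ factor in the denominator of the generator $G^\Lambda$ cancels the $(\Lambda:\Sigma)$ that appears when taking expectations against $\pi$.
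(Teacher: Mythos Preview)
Your proposal is correct and takes essentially the same approach as the paper, which simply states that the result follows directly from Lemma \ref{lem:corr-gen} applied to $\log\pi$. You have merely made explicit the computation that the numerator equals $k$ via $\EEE{X\sim\pi}[\grad\log\pi'\,\tfrac{k\Lambda}{\Lambda:\Sigma}\,\grad\log\pi] = \tfrac{k(\Lambda:\Sigma)}{\Lambda:\Sigma}=k$, which the paper leaves implicit.
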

\begin{proof}
This follows directly from Lemma $\ref{lem:corr-gen}$ applied to $\log\pi(X)$.
\end{proof}

\subsection{High Dimensional Dependence Asymptotics}
\label{subsec:hd-dependence}
We now consider the im\-plications of the speed limits derived above on the performance decay for targets with high-dimensional dependence.  This section is intentionally less mathematically rigorous than the rest of the work, with the intention of motivating future research in the area of optimal scaling for high-dimensional targets with non-trivial dependence structures. 

We attempt to use the two ``speed limits'' derived in the previous section to characterise some regimes in which RWM will perform poorly. Consider a sequence of densities of varying dimension $\set{\pi_k}_{k\in\NN}$ with $\pi_k$ a density on $\RR^k$. We consider $\Pi_{k,d} = \pi_k^{\otimes d}$. 

Having chosen to scale time by $dk$ rather than by $d$ the acceleration required to get the weak limit in Theorem \ref{thm:weak-conv} is comparable across targets with equal total dimension, and hence the limiting diffusions should be comparable as well (at least in terms of their spectral gaps and rates of convergence).

If $\VVar{X\sim\pi_k} \log\pi_k (X) \not\in O(k)$ then RWM performance drops off as $k\to\infty$, so dependence structures for which lead to this behaviour are expected to work poorly using RWM (much worse than for an IID target, with a spectral gap tending to $0$), no matter how they are tuned. In the case of IID targets (where $\pi_k =\pi_1 ^{\otimes k}$) we have $\VVar{X\sim\pi_k} \log\pi_k (X) = k \VVar{X\sim\pi_1} \log\pi_1 (X) \in \Theta(k)$. A similar result will hold for rotations and scalings of IID targets), showing that properly shaped proposals will yield commensurate performance for such targets.

The same story holds true if $\Gamma:\Sigma = \VVar{X\sim\pi}[X] : \VVar{X\sim\pi}[\grad\log\pi]\not\in O(k)$. Again, in the case of IID targets (where $\pi_k =\pi_1 ^{\otimes k}$, $\Gamma = \diag(\gamma)$, $\Sigma = \diag(\sigma)$) we have $\Gamma : \Sigma = k\gamma\sigma \in \Theta(k)$. A similar result will hold for rotations and scalings of IID targets), showing that properly shaped proposals will yield commensurate performance for such targets.

While these criteria may not be the sharpest possible, the recipe of (i) deriving a diffusion limit, (ii) deriving upper bounds on the spectral gap using formulas such as Equation \ref{eqn:opt-prob}, and (iii) considering the asymptotics of the upper bound on the spectral gap as the dependence structure tends to infinity can be useful in developing our understanding of the behaviour of MCMC methods for dependent targets (which is an under explored topic in the literature).

\newpage

\section{Proof of Theorem \ref{thm:weak-conv}} \label{sec:pf-weak-conv}
%!TEX root = ../blockiid_main.tex
\setlength{\parindent}{0pt}
\setlength{\parskip}{6pt}
\subsection{Definitions}
We will make consistent use of the results listed in Prop\-osition \ref{prop:summary-of-gll} which hold under our assumptions.

Let $\hat G_d^{l,\Lambda}$ be the generator for a pure jump process with homogeneous jump intensity $kd$, and jump distribution given by the Random Walk Metropolis transition kernel with normal increments of mean $0$ and variance $l^2 I_d\otimes \Lambda / (d-1)$, where $\Lambda$ is symmetric and strictly positive definite. The generator is explicitly given by
\[
  \hat G^{l,\Lambda}_d f (x) 
    &= kd \ \EE_{Z\sim\Nn_d\lcr({l^2\Lambda})} \lcr[{(f(x+Z) - f(x))\lcr({1\wedge\frac{\Pi_d(x+Z)}{\Pi_d(x)}})}] 
\]
Then $\hat G^{l,\Lambda}_d$ is a bounded linear operator on $\hat C(\RR^{kd})$, so we can take its domain to be the full Banach space $\hat C(\RR^{kd})$. 

Let $G^{l,\Lambda}_d$ be the restriction of $\hat G^{l,\Lambda}_d$  to functions of the form $f(x_{1:kd}) = f_1 (x_{1:rk})$ which act only on the first $rk$ components.

Let $G^{l,\Lambda}$ be the generator of an anisotropic Langevin Diffusion,
\[
  G^{l,\Lambda} f
    & = k l^2 a(l) \frac{1}{2}\lcr({[I_r\otimes \Lambda] : \grad^2 f + (\grad\log\pi^{\otimes r})' [I_r\otimes \Lambda] (\grad f)}) 
\]
where $A:B  = \tr(A'B)$, and where $a(l) = 2\Phi\lcr({\frac{-l \sqrt J}{2}})$, and 
\[
	\Sigma 
		& = \Var_{X\sim\pi} [\grad\log\pi(X)] = \EEE{X\sim\pi} \grad\log\pi(X) \grad\log\pi(X)'\\	
	J
		& = \EEE{X\sim\pi}\lcr[{[\grad\log\pi(X)]' \Lambda [\grad\log\pi(X)]}] = \Lambda : \Sigma		\\
\]
We take the domain of $G^{l,\Lambda}$ to be $\Dd(G^{l,\Lambda}) = C^\infty_c(\RR^{rk}) \cap L_2(\RR^{rk},\pi)$.

\subsection{A General Convergence Theorem}
Our goal is to show that if $\vX_d$ has generator $\hat G^{l,\Lambda}_d$ and $\vX^r$   has generator $G^{l,\Lambda}$ then the stochastic process of the first $rk$ components of $\vX_d$, $\vX^{(1):(r)}_d$, converges weakly to $\vX^r$ in the Skorohod topology: $\vX_d^{(1):(r)}\disttosk \vX^r$.

The following result, paraphrased and specialised from \citep{ethier2009markov}, establishes sufficient conditions for this convergence to hold.

\begin{proposition}[Convergence Theorem from \citep{ethier2009markov}]
\label{prop:ek-conv}
Suppose that: 
\begin{itemize}
\item[(i)] $\vX_d$ is a Markov process in $E_d$ with cadlag sample paths and with single-valued full generator $\hat G_d$, and $\vX_d^{(1):(r)} = \rho_d(\vX_d)$ where $\rho_d:E_d\to E$ is measurable. 
\item[(ii)] $G$ is single-valued and its closure generates a Feller semigroup on $E$ corresponding to the Markov process $\vX^r$.
\item[(iii)] The initial distribution of $\vX_d^{(1):(r)}$ converges weakly to the initial dist\-ribution of $\vX^r$; 
\[\vX_d^{(1):(r)}(0) \distto \vX^r(0)\ . \]
\item[(iv)] $\ol{ \Dd(G)}$ contains an algebra which strongly separates points,
\item[(v)] For each $f\in \Dd(G)$, and each $T>0$, is a sequence of functions $f_d\in \Dd(\hat G_d)$, and a sequence of sets $F_d\subset E_d$ such that $	\sup_{d} \norm{f_d} <\infty$, and:
\[	
	\label{eqn:ek-high-prob}
	\lim_{d\to\infty} \PP(\vX_d \in F_d \quad\forall 0\leq t\leq T) & = 1 \\
\]
\[
	\label{eqn:ek-f-conv}
	\lim_{d\to\infty}\sup_{\vx_d\in F_d} \abs{f (\rho_d(\vx_d)) - f_d(\vx_d)} & = 0 \\
\]
\[
	\label{eqn:ek-Gf-conv}
	\lim_{d\to\infty}\sup_{\vx_d\in F_d} \abs{ [Gf](\rho_d(\vx_d)) - [\hat G_d f_d] (\vx_d)} & = 0	
\]
\end{itemize}
Then $\vX_d^{(1):(r)}$ converges weakly in the Skorohod topology to $\vX^r$: $\vX_d^{(1):(r)}\disttosk \vX^r$..

\end{proposition}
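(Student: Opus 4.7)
The plan is to deduce this proposition by direct specialisation of the standard generator-based Skorohod convergence theorem for Markov processes together with a projection map, namely Corollary 8.6 (or its close relatives 8.2--8.7) of Chapter 4 of \citep{ethier2009markov}. My ``proof'' consists essentially of setting up a dictionary between the hypotheses (i)--(v) above and the hypotheses of that EK theorem and then quoting the conclusion. Since the statement is already phrased as a paraphrase and specialisation, no genuinely new probabilistic content is required; the labour is bookkeeping and keeping track of where each hypothesis is used.

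First I would fix the translation. Our $\vX_d$ plays the role of the approximating Markov process in $E_d$, $\vX^r$ is the limiting Markov process in the Polish space $E$, and $\rho_d : E_d \to E$ is the projection (written $\eta_d$ in EK). The operators $\hat G_d$ and $G$ correspond to EK's pre-generator sequence and its limit, restricted to act through functions pulled back by $\rho_d$. Condition (i) grants cadlag sample paths and that $\hat G_d$ is a single-valued full generator, so $\vX_d$ solves the corresponding well-posed martingale problem. Condition (ii) is exactly the input needed on the limit side: since $\ol G$ generates a Feller semigroup on $E$, $\vX^r$ is a well-defined strong Markov process whose law is the unique solution of the $(G,\pi^{\otimes r})$-martingale problem.

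Next I would argue that (iv) and (v) supply the two analytic ingredients required by the EK theorem. The ``strongly separates points'' algebra sitting inside $\ol{\Dd(G)}$ in (iv) plays the role of a core: it ensures both that weak convergence of finite dimensional distributions can be tested against a sufficiently rich function class and that uniqueness of the martingale problem for $G$ propagates from $\Dd(G)$ to $\ol{\Dd(G)}$. The two uniform convergence statements in (v), localised to the high-probability sets $F_d$, are the precise form in which EK express generator convergence for projected processes; the containment statement (\ref{eqn:ek-high-prob}) handles the fact that the generators need not converge uniformly on all of $E_d$. Combined with $\sup_d \norm{f_d} < \infty$, these three conditions give the semigroup convergence $T_d(t) f_d \to T(t) f$ uniformly on bounded time intervals on the relevant sets, which is what EK need to push generator convergence to path-space weak convergence.

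Once these pieces are lined up, the EK convergence theorem, fed with the initial-distribution convergence of (iii), yields $\rho_d(\vX_d) = \vX_d^{(1):(r)} \disttosk \vX^r$ in the Skorohod topology, as required. The only genuine ``obstacle'' is notational/structural: one must check that our formulation of ``single-valued full generator'' and ``closure generates a Feller semigroup'' matches EK's somewhat more general ``graph limit'' language, and that the localisation on $F_d$ is compatible with their formulation involving asymptotic residence in compact-like sets. Both matches are essentially spelled out in \citep{ethier2009markov}, so once the dictionary above is in place the statement follows with no further work.
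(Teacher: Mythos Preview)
Your proposal is correct and matches the paper's approach: the paper's proof is likewise a one-paragraph appeal to \citep{ethier2009markov}, Chapter 4, Corollary 8.7, noting only that the stated hypotheses have been specialised (cadlag implies progressive, the generators are single-valued, and the Feller property ensures a strongly continuous contraction semigroup). Your dictionary is slightly more detailed than the paper's, but the substance is identical.
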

\begin{proof}
This is a restatement of \citep{ethier2009markov} Chapter 4, cor\-ollary 8.7. where we have simplified and specialised some of the stated assumptions. In particular, we use that (cadlag $\implies$ progressive), and we assume that all generators involved are single valued, and that $\vX^r$ is Feller which implies its generator must generate a strongly continuous contraction semigroup.
\end{proof}

\begin{remark}
Because $\vX^r$ is assumed to be a Feller process, it has a cadlag modification. Thus without loss of generality, $\vX^r$ may be assumed to be cadlag.
\end{remark}

Thus, taking $E = \RR^{rk}$, and $E_d = \RR^{kd}$, and $\rho_d(x_{1:kd}) = x_{1:rk}$, and $\hat G_d = \hat G^{l,\Lambda}_d$ and $G = G^{l,\Lambda}$ as defined above, we need only verify the five premises of Proposition \ref{prop:ek-conv} in order to establish Theorem \ref{thm:weak-conv}.

\begin{lemma}[Verifying Premises (i)-(iv) of Proposition \ref{prop:ek-conv}]
\label{lem:verify-ek-1-4}
Under the definitions above, and the assumption that $\grad\log\pi$ is $L$-Lipschitz we have that premises (i)-(iv) of Proposition \ref{prop:ek-conv} hold.
\end{lemma}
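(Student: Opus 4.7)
The plan is to verify each of the four premises in turn. Premises (i), (iii), and (iv) are short, following from the construction of $\vX_d$ and from standard density arguments, so the bulk of the effort will go into premise (ii): establishing that the closure of $G^{l,\Lambda}$ generates a Feller semigroup on $\RR^{rk}$.

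For (i), $\vX_d$ is defined as a pure-jump Markov process with rate-$kd$ exponential holding times, hence has cadlag sample paths; the operator $\hat G_d^{l,\Lambda}$ is bounded on $\hat C(\RR^{kd})$ because its jump kernel is a bona fide probability kernel, so it is single-valued on its full Banach-space domain; and $\rho_d$ is a coordinate projection, hence continuous and measurable. For (iii), by construction $\vX_d(0)\sim\Pi_d$ so $\vX_d^{(1):(r)}(0)\sim\pi^{\otimes r}$, which agrees exactly with the law of $\vX^r(0)$, making convergence of initial distributions trivial. For (iv), since $\pi^{\otimes r}$ is a probability density, $C^\infty_c(\RR^{rk}) \subset L_2(\pi^{\otimes r})$ and hence $C^\infty_c(\RR^{rk}) \subset \Dd(G^{l,\Lambda})$; the sup-norm closure of $C^\infty_c$ contains $C_0(\RR^{rk})$, which is a subalgebra of $\hat C(\RR^{rk})$ that strongly separates points (for instance using products of translated compactly supported bump functions).

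The substantive step is (ii), which I break into three stages. \emph{Existence of the diffusion:} assumption \hyperref[ass:grad-log-lip]{(A1)} makes the drift of the SDE for $\vX^r$ globally Lipschitz with constant diffusion coefficient, so classical theory gives a pathwise-unique strong solution $\vX^r_x$ for every starting point $x\in\RR^{rk}$. \emph{Feller property:} Gronwall's inequality applied to $\|\vX^r_x(t)-\vX^r_y(t)\|^2$ gives Lipschitz dependence on the initial condition uniformly on compact time intervals, so $P_tf(x) := \EE f(\vX^r_x(t))$ is continuous in $x$ whenever $f$ is bounded continuous. Preservation of $C_0$ follows from the a priori bound $\EE\|\vX^r_x(t)-x\|^2 \leq C(t)(1+\|x\|^2)$ -- again obtained via Gronwall from the at-most-linear growth of the drift -- which forces excursions from $x$ to be small relative to $\|x\|$ for large $\|x\|$, giving $P_tf(x)\to 0$ as $\|x\|\to\infty$ for $f\in C_0$. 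Strong continuity at $t=0^+$ in the sup norm follows from dominated convergence and pathwise continuity of $t\mapsto\vX^r_x(t)$. \emph{Core identification:} applying It\^o's formula to $f\in C^\infty_c(\RR^{rk})$ (justified since $\grad^2 f$ is bounded and the drift has polynomial moments by Proposition \ref{prop:summary-of-gll}) produces a martingale which identifies the action of the full generator on $C^\infty_c$ with $G^{l,\Lambda}$, and a standard core criterion (e.g., Ethier-Kurtz Proposition 1.3.3) shows $C^\infty_c$ is a core, so its closure generates the desired Feller semigroup.

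I expect the main obstacle to lie within the Feller stage, specifically the claim that $P_t$ preserves $C_0$. Because the drift is only Lipschitz -- hence possibly of linear growth -- rather than bounded, the process starting from a large $x$ might a priori be pushed back toward the origin; the Gronwall-based growth bound is precisely what rules this out, showing that large initial conditions remain large with high probability over any fixed time horizon.
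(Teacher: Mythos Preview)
Your treatments of (i), (iii), and (iv) are correct and match the paper's, though the paper is slightly more concrete for (iv): it exhibits the algebra $C_c^\infty(\RR^{rk})$ itself (not its closure $C_0$) and checks strong separation of points directly via the radial bump function $f_{x,\delta}(y)=\exp\bigl(-(1-\|y-x\|^2/\delta^2)^{-1}\bigr)\One_{\|y-x\|<\delta}$.

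For (ii) the paper takes a one-line route you may not have anticipated: it simply invokes \citep{ethier2009markov}, Chapter~8, Theorem~2.5, which states that when the drift is globally Lipschitz and the diffusion coefficient is constant, the closure of $\{(f,G^{l,\Lambda}f):f\in C_c^\infty\}$ generates a Feller semigroup on $\hat C(\RR^{rk})$. Your three-stage sketch (existence, Feller, core) is essentially an outline of how that theorem is proved, and most of it is fine, but there is a genuine gap in your $C_0$-preservation step. The Gronwall bound you quote,
\[
\EE\|\vX^r_x(t)-x\|^2 \le C(t)(1+\|x\|^2),
\]
does \emph{not} force excursions from $x$ to be small relative to $\|x\|$; it permits excursions of order $\|x\|$. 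Concretely, Chebyshev gives only $\PP(\|\vX^r_x(t)\|\le R)\le C(t)(1+\|x\|^2)/(\|x\|-R)^2\to C(t)$ as $\|x\|\to\infty$, which is bounded but not zero, so one cannot conclude $P_tf(x)\to 0$. The one-dimensional Ornstein--Uhlenbeck process already shows the issue: $\EE|X_t^x-x|^2\asymp x^2(1-e^{-t})^2$ grows quadratically in $x$, yet $P_t$ does preserve $C_0$ --- just not for the reason you give. A correct argument needs either a lower bound of the form $\EE\|\vX^r_x(t)\|^2\ge c(t)\|x\|^2-C(t)$, or the abstract Hille--Yosida/positive-maximum-principle route that underlies the Ethier--Kurtz theorem. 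Your final paragraph anticipates precisely this difficulty but then misdiagnoses the cure: the Gronwall growth bound is an \emph{upper} bound on displacement and cannot by itself rule out return to compacta. Either cite the theorem as the paper does, or supply the missing lower-bound estimate.
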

\begin{proof}
\begin{itemize}
\item[(i)] Since $\hat G_d^{l,\Lambda}$ is a bounded linear operator it must be single valued, and since the domain is the full Banach space $\hat C(\RR^{rkd})$ it must be its own closure. Since it generates a pure jump Markov process (with homogeneous intensity and the RWM transition function), the sample paths of $\vX_d$ must be cadlag.
\item[(ii)] Since $\grad\log\pi$ is assumed to be Lischitz, then by \citep{ethier2009markov} [chapter 8, theorem 2.5], the closure of 
\[
	\set{(f,G^{l,\Lambda} f): f\in C_c^\infty(\RR^{rk})}
\] 
is single valued and generates a Feller semigroup on $\hat C(\RR^{rk})$.
\item[(iii)] This is trivially satisfied because of the assumption that $\vX(0)\sim\pi$ and $\vX_d(0)\sim \Pi_d = \pi^{\otimes d}$
\item[(iv)] In our case, $\hat C(\RR^{rk})\supseteq \ol{ \Dd(G^{l,\Lambda})} \supseteq C_c^\infty(\RR^{rk})$. We prove that the algebra $C_c^\infty(\RR^{rk})$ strongly separates points. Fix $x\in \RR^k$ and $\delta >0$. Consider the location-scale bump function:
\[f_{x,\delta}(y) = \exp\lcr({{-\frac{1}{1-\frac{\norm{y-x}^2}{\delta^2}}}}) \One_{\norm{y-x}<\delta} \ .\]
This function is in $C_c^\infty(\RR^{rk})$ and for $\norm{x-y}>\delta$ we have \[\abs{f(y) -f(x)} \geq 1/e \ .\]
\end{itemize}
\end{proof}

Therefore, to prove Theorem \ref{thm:weak-conv} we need only verify premise (v) of Prop\-osition \ref{prop:ek-conv}. This is done in the next subsection.

\subsection{Verifying Premise (v) of Proposition \ref{prop:ek-conv}}
This premise is more comp\-licated to verify. We first construct the sequence of ``large sets'', $\set{F_d}_{d\in\NN}$ and verify Equation \ref{eqn:ek-high-prob}.  Then, we verify ``uniform convergence of generator evaluations on large sets'', Equation  \ref{eqn:ek-Gf-conv} for $f\in C_c^\infty$ which we have taken to be the domain of $G^{l,\Lambda}$, using $f_d = f\circ \rho_d$. The structure of this section closely follows that of the weak convergence proof in \citep{neal2006optimal}. 

Other than completing the proof in a multivariate setting, we make two notable changes to the structure of the proof relative to \citep{neal2006optimal}. First, we control the size of $\norm{\grad\log\pi^{\otimes r}(x^{(1):(r)})}$ on our ``large set'' by including $F_{d,4}$, which is needed to ensure: 
\[
& \lim_{d\to\infty} \sup_{x\in F_d} \lcr\vert{d \grad\log\pi^{\otimes r}(x^{(1):(r)})'}.\\
&\qquad\qquad \cdot \lcr.{\lcr({\EEE{Z^{(1):(r)}} \lcr[{Z^{(1):(r)}\lcr({h(x^{(1):(r)}+Z^{(1):(r)})- h(x^{(1):(r)})})}] }.}.\\
&\qquad\qquad\qquad\qquad \lcr.{\lcr.{- [I_r\otimes \Lambda] \grad h(x^{(1):(r)})})}\vert \\
& = 0
\]
while \citep{neal2006optimal} appears to only show the equivalent of
\[
& \lim_{d\to\infty} \sup_{x\in F_d} \lcr\vert{kd \EEE{Z^{(1):(r)}} \lcr[{{Z^{(1):(r)}}\lcr({h(x^{(1):(r)}+Z^{(1):(r)}) - h(x^{(1):(r)})})}]}.\\
	&\qquad\qquad \qquad\qquad \lcr.{ - kl^2 \Lambda \grad h(x^{(1):(r)})}\vert\\
	& =  0
\]
which is not sufficient for the final result, since $\grad\log\pi$ may be unbounded and $\EE_{Z^{(1):(r)}} h(x^{(1):(r)} +Z^{(1):(r)})$ is not compactly supported even if $h$ is. 

Secondly, \citep{neal2006optimal} implicitly assumes that the 3rd order partial derivatives of $\pi$ exist and are uniformly bounded. This is needed in their proof to control the 3rd order remainder of a 2nd order Taylor expansion. We circumvent this by including $F_{d,3}$ below in our ``large set'', allowing us to control the relevant error using the convergence of an integrated finite difference to the corresponding derivative. The use of dominated convergence to control the approximation error on this set was inspired by \citep{lalancette2017convergence}, though we have modified the technique to also not require continuous second derivatives.

\begin{remark}
Taking $f_d = f\circ \rho_d$, we have that Equation (\ref{eqn:ek-f-conv}) is trivially satisfied. 
\end{remark}
\subsubsection{Large Sets}\label{subsec:large-sets}
Suppose that $d>r$. The behaviour on the initial segment is irrelevant for the limit.

Let 
\[
  F_d 
     & = F_0^d \cap F_{1,d}\cap F_{2,d} \cap F_{3,d} \cap F_{4,d} \\   
     & = \set{x\in \RR^{kd}: \abs{R_d (x^{(r+1):(d)}) - J} < d^{-1/8} +\frac{J(r-1)}{(d-1)}} \\
     &\qquad\qquad \cap \set {x\in \RR^{kd}: \abs{S_d (x^{(r+1):(d)}) - J} < d^{-1/8} +\frac{J(r-1)}{(d-1)}} \\
     & \qquad\qquad \cap\set{x\in \RR^{kd}: U_d(x^{(r+1):(d)}) \leq \theta(d)+ \sqrt{2Ll^2 K_\Lambda \frac{\log d}{d}} }\\
     &\qquad\qquad \cap \set{x\in \RR^{kd}: \norm{\grad\log\pi^{\otimes r}(x^{(1):(r)})} \leq 2kr\sqrt{L\log d}}\ ,
\] 
where: 
\[
	& F_0 
	 = \set{x\in\RR^k: \grad\log\pi\text{ is differentiable at } x}\\
  	& R_d (x^{(r+1):(d)})
    	  = \frac{1}{d-1} \sum_{i=r+1}^d (\grad\log\pi(x^{(i)})'\Lambda (\grad\log\pi(x^{(i)}))\\
  	& S_d (x^{(r+1):(d)})
    	 = \frac{-1}{d-1} \sum_{i=r+1}^d \Lambda: (\grad^2\log\pi(x^{(i)}))   \\
	& U_d(x^{(r+1):(d)}) \\
    	& \quad = \EEE{Z\sim\Nn_d\lcr({l^2\Lambda})}\abs{\sum_{i=r+1}^d \scriptstyle{\lcr({\log\frac{\pi(x^{(i)}+Z^{(i)})}{\pi(x^{(i)})} - \grad\log\pi(x^{(i)})Z^{(i)} -{Z^{(i)}}' \frac{\grad^2\log\pi(x^{(i)})}{2}Z^{(i)} })}  }\\    	
    &K_\Lambda 
    	= \sqrt{2\norm{\Lambda}_F^2+\tr(\Lambda)^2}\\  
	& \theta(d) = l^2 (d-1) K_\Lambda \EEE{X\sim\pi} \sqrt{\EEE{Z\sim\Nn\lcr({0,\frac{l^2\Lambda}{d-1}})}\scriptstyle{\abs{\log\frac{\pi(X+Z)}{\pi(X)} - \grad\log\pi(X)Z -Z' \frac{\grad^2\log\pi(X)}{2}Z}}^2} \ .
\]

\begin{lemma}[$\lim_{d\to\infty}\theta(d) =  0$]\label{lem:theta-to-0}
Under assumption (A1) $\lim_{d\to\infty}\theta(d) =  0$
\end{lemma}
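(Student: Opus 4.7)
My plan is to recognize the integrand as a second-order Taylor remainder of $g := \log\pi$ at $X$ and exploit Proposition~\ref{prop:summary-of-gll}(a), which under (A1) guarantees $\|\grad^2 g\| \leq L$ at Lebesgue-a.e.\ point. The main obstacle is that $g\in C^{1,1}$ only, not $C^3$, so the classical cubic remainder bound $|R|=O(|Z|^3)$ is unavailable; I will instead use an integral form of the Taylor remainder and rely on approximate continuity of $\grad^2 g$ at $\pi$-a.e.\ $X$ to quantify smallness in expectation.

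First, I would set
\[
R(X,Z):=g(X+Z)-g(X)-\grad g(X)'Z-\tfrac{1}{2}Z'\grad^2 g(X)Z
\]
and use that the map $s\mapsto\grad g(X+sZ)$ is absolutely continuous on $[0,1]$ (since $\grad g$ is Lipschitz) to derive the integral identity
\[
R(X,Z) = \int_0^1 (1-s)\,Z'\bigl(\grad^2 g(X+sZ)-\grad^2 g(X)\bigr)Z\,ds.
\]
Rescaling $Z=\sqrt{l^2/(d-1)}\,W$ with $W\sim\Nn(0,\Lambda)$ and applying Cauchy--Schwarz in $s$, this gives
\[
(d-1)^2 R(X,Z)^2 \leq l^4 |W|^4 \int_0^1\|\grad^2 g(X+sZ)-\grad^2 g(X)\|^2\,ds,
\]
which is uniformly dominated by the integrable envelope $4L^2 l^4 |W|^4$.

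Next, I would show that $\EEE{Z}(d-1)^2 R(X,Z)^2\to 0$ for $\pi$-a.e.\ $X$. The key input is that Lebesgue-a.e.\ $X$ is an approximate continuity point of $\grad^2 g$. For such $X$ and any $s>0$, a standard truncation argument---splitting the set $\{y:\|\grad^2 g(X+y)-\grad^2 g(X)\|>\epsilon\}$ into its intersection with a ball of radius a modest multiple of the standard deviation of $sZ$ (negligible Gaussian mass by the Lebesgue density-$0$ condition at the origin) and its complement (negligible by Gaussian tails)---yields $\|\grad^2 g(X+sZ)-\grad^2 g(X)\|\to 0$ in probability, hence in $L^p$ for every $p$ by boundedness. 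A Cauchy--Schwarz split separating the $|W|^4$ weight from the Hessian-difference factor, followed by dominated convergence in $s\in[0,1]$, then produces the desired pointwise convergence.

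Finally, since $\sqrt{\EEE{Z}(d-1)^2 R(X,Z)^2}$ converges to $0$ $\pi$-a.e.\ and is dominated by a constant, one more application of dominated convergence under $X\sim\pi$ gives $\EEE{X\sim\pi}\sqrt{(d-1)^2\EEE{Z}R(X,Z)^2}\to 0$, which is precisely $\theta(d)/(l^2 K_\Lambda)\to 0$. The hard part is the third paragraph: without pointwise continuity of $\grad^2 g$, transferring almost-everywhere smoothness into Gaussian expectations at shrinking variance requires the approximate continuity framework together with careful handling of the $|W|^4$ moment weight via Cauchy--Schwarz.
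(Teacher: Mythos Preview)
Your argument is correct and complete, but it takes a genuinely different route from the paper's. The paper stays at the first-derivative level: it writes $(d-1)R(x,Z)$ via the fundamental theorem of calculus as
\[
\int_0^1 \frac{\grad\log\pi\lcr({x+\tfrac{hW}{\sqrt{d-1}}})-\grad\log\pi(x)}{1/\sqrt{d-1}}\cdot W\,dh \;-\; W'\frac{\grad^2\log\pi(x)}{2}W,
\]
and then invokes only the \emph{definition} of Fr\'echet differentiability of $\grad\log\pi$ at points of $F_0$ (supplied by Rademacher) to see that the integrand converges pointwise in $h$ to $hW'\grad^2\log\pi(x)W$; the Lipschitz bound $Lh\|W\|^2$ then gives bounded convergence in $h$, followed by two layers of dominated convergence in $W$ and in $x$. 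No property of $\grad^2\log\pi$ beyond its a.e.\ existence and the bound $\|\grad^2\log\pi\|\le L$ is used.

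Your approach works one derivative higher, through the second-order integral remainder, and therefore needs an additional regularity input for $\grad^2 g$: approximate continuity at $\pi$-a.e.\ $X$, plus the truncation argument translating Lebesgue density~$0$ at the origin into vanishing Gaussian mass at shrinking scale. This is valid (bounded measurable functions are approximately continuous a.e., and the density comparison between the shrinking Gaussian and the ball of comparable radius is exactly as you describe), but it is heavier machinery. The paper's route is shorter precisely because differentiability at a point already packages convergence of difference quotients along \emph{all} incoming directions, so no density argument is needed. Your route, by contrast, keeps the Hessian as the central object throughout and would adapt more naturally if one later wanted quantitative rates on $\theta(d)$ under additional smoothness of $\grad^2\log\pi$. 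One minor point worth making explicit in your write-up: the identity $R(X,Z)=\int_0^1(1-s)Z'(\grad^2 g(X+sZ)-\grad^2 g(X))Z\,ds$ holds only for a.e.\ pair $(X,Z)$ (by Fubini, since $\grad^2 g$ exists merely a.e.), which is harmless here since everything sits under expectations.
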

\begin{proof}
For $W\mysim{iid}\Nn(0,l^2\Lambda)$ and $Z = \frac{W}{\sqrt{d-1}}$, for all $x\in F_0$, using the fundamental theorem of calculus,
\[
	& (d-1)\lcr({\log\frac{\pi(x+Z)}{\pi(x)} - \grad\log\pi(x)Z -{Z}' \frac{\grad^2\log\pi(x)}{2}}) \\ 
		& = \sqrt{d-1} \int_0^1 \lcr({\grad\log\pi\lcr({x+\frac{hW}{\sqrt{d-1}}})- \grad\log\pi(x) })W dh \\
		&\qquad\qquad-  {W}'\frac{\grad^2\log\pi(x)}{2} W\\
		& = \int_0^1 \frac{\grad\log\pi\lcr({x+\frac{hW}{\sqrt{d-1}}})- \grad\log\pi(x) }{1 / \sqrt{d-1}} W dh -  {W}'\frac{\grad^2\log\pi(x)}{2} W . 
\]
For $x\in F_0$, as $d\to\infty$ the integrand converges pointwise to \[W'\grad^2\log\pi(x) W h \] from the differentiability of $\grad\log\pi$ at $x$.
Also, from the Lipschitz property of $\grad\log\pi$, the integrand is bounded by $L h \norm{W}^2$ for all $d\geq 2$ and all $h\in[0,1]$. In fact, we have the following bound which will also be useful later:
\[\label{eqn:theta-to-0-bound}
& \hspace{-2em}\abs{\int_0^1 \frac{\grad\log\pi\lcr({x+\frac{hW}{\sqrt{d-1}}})- \grad\log\pi(x) }{1 / \sqrt{d-1}} W dh -  {W}'\frac{\grad^2\log\pi(x)}{2} W}\\
	& \leq \abs{\int_0^1 \frac{\grad\log\pi\lcr({x+\frac{hW}{\sqrt{d-1}}})- \grad\log\pi(x) }{1 / \sqrt{d-1}} W dh} +\abs{{W}'\frac{\grad^2\log\pi(x)}{2} W}\\
	& \leq \int_0^1 L \norm{W}^2 h\ dh + \frac{L}{2} \norm{W}^2\\
	& = L \norm{W}^2
\]
Therefore, by the bounded convergence theorem,
\[
	&\hspace{-2em} \lim_{d\to\infty}\int_0^1 \frac{\grad\log\pi\lcr({x+\frac{hW}{\sqrt{d-1}}})- \grad\log\pi(x) }{1 / \sqrt{d-1}} W dh\\
		& = \int_0^1 {W}'\grad^2\log\pi(x) W h\ dh \\
		& = {W}'\frac{\grad^2\log\pi(x)}{2} W
\] 
Therefore, for $x\in F_0$, for $W\sim\Nn(0,l^2\Lambda)$,
\[
& \lim_{d\to\infty}(d-1)\lcr({\log\frac{\pi(x+\frac{W}{\sqrt{d-1}})}{\pi(x)} - \grad\log\pi(x)\frac{W}{\sqrt{d-1}}}.\\
& \qquad\qquad\qquad\qquad\qquad \lcr.{-{\frac{W}{\sqrt{d-1}}}' \frac{\grad^2\log\pi(x)}{2} \frac{W}{\sqrt{d-1}}})\\
& =0
\]
Now, using the bound in Equation \ref{eqn:theta-to-0-bound} again to upper bound the integrand, by the dominated convergence theorem, we have (with $W\sim\Nn(0,l^2\Lambda)$):
\[
	& \lim_{d\to\infty}(d-1)^2\EEE{W} \lcr\vert{\lcr({\log\frac{\pi(x+\frac{W}{\sqrt{d-1}})}{\pi(x)} - \grad\log\pi(x)\frac{W}{\sqrt{d-1}} }.}.\\
	&\qquad\qquad\qquad\qquad\lcr.{\lcr.{-{\frac{W}{\sqrt{d-1}}}' \frac{\grad^2\log\pi(x)}{2} \frac{W}{\sqrt{d-1}}})}\vert^2\\
	& =0 \ .
\]
As a function of $x$, this is uniformly bounded by $L^2 \EEE{W}  \norm{W}^4 <\infty$ on $F_0$, so applying the dominated convergence theorem again, since $F_0$ has measure $1$ under $\pi$ (since $\pi$ is absolutely continuous with respect to the Lebesgue measure) we have:
\[
\theta(d) 
    & = l^2 (d-1) K_\Lambda \EEE{X\sim\pi} \sqrt{\EEE{Z\sim\Nn\lcr({0,\frac{l^2\Lambda}{d-1}})}\scriptstyle{\abs{\log\frac{\pi(X+Z)}{\pi(X)} - \grad\log\pi(X)Z -Z' \frac{\grad^2\log\pi(X)}{2}Z}^2}} \\
    & \to 0
\]
\end{proof}

\begin{lemma} If $\vX_d$ is the pure jump process with generator $G^{l,\Lambda}_d$,  and if $\vX_d(0)\sim \Pi_d$, then for any $T>0$
  \[\PP(\vX_d(t) \in F_d\quad  \forall 0\leq t\leq T) \to 1\]
\end{lemma}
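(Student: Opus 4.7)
The strategy is to reduce the uniform-in-$t$ event to a union bound at the jump times of $\vX_d$ via stationarity, and then to verify a $o(1/d)$ tail bound under $\Pi_d$ for each of the five constituent sets of $F_d$.

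The process $\vX_d$ is pure-jump with homogeneous rate $kd$, so the number of jumps $N_T$ in $[0,T]$ is $\poisson(kdT)$-distributed and, by construction, independent of the embedded discrete-time RWM chain $(\vX_d(\tau_j))_{j\geq 0}$. Because $\Pi_d$ is stationary for the RWM kernel and $\vX_d(0)\sim\Pi_d$, every chain state satisfies $\vX_d(\tau_j)\sim\Pi_d$. Since paths are constant between jumps,
\[
\Pr(\exists\, t\in[0,T]:\vX_d(t)\notin F_d) \leq \EE[N_T+1]\cdot\Prr{\Pi_d}(F_d^c) = (1+kdT)\,\Prr{\Pi_d}(F_d^c),
\]
so it suffices to show $d\,\Prr{\Pi_d}(F_{j,d}^c)\to 0$ for each $j\in\set{0,1,2,3,4}$.

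Four of the five pieces follow from standard concentration. $F_0$ has full $\pi$-measure by Rademacher's theorem (Proposition~\ref{prop:summary-of-gll}(a)), so $\Prr{\Pi_d}((F_0^d)^c)=0$. For $F_{1,d}$ and $F_{2,d}$, $R_d$ and $S_d$ are empirical means of $d-r$ i.i.d.\ summands whose true expectations equal $J(d-r)/(d-1)$; the correction $J(r-1)/(d-1)$ in the threshold exactly absorbs this bias. Finite fourth moments of the summands, guaranteed by sub-Gaussianity of $\grad\log\pi$ (Lemma~\ref{lem:gll-subg}) together with the uniform bound $\norm{\grad^2\log\pi}\leq L$ (Proposition~\ref{prop:summary-of-gll}(a)), let Chebyshev applied to the fourth central moment of the empirical mean give $\Prr{\Pi_d}(F_{j,d}^c)=O(d^{-3/2})$ for $j=1,2$. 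For $F_{4,d}$, coordinatewise sub-Gaussianity of $\grad\log\pi$ with proxy variance $L$ combined with a union bound over the $kr$ coordinates yields $\Prr{\Pi_d}(F_{4,d}^c)=O(d^{-\alpha})$ for some $\alpha>1$ at the chosen threshold $2kr\sqrt{L\log d}$.

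The delicate piece, and the main obstacle, is $F_{3,d}$, whose threshold $\theta(d)+\sqrt{2Ll^2 K_\Lambda\log d/d}$ is engineered precisely for this step. Let $T_i=T_i(x^{(i)},Z^{(i)})$ denote the per-block Taylor remainder and $W^{(i)}=Z^{(i)}\sqrt{d-1}\sim\Nn(0,l^2\Lambda)$; Equation~(\ref{eqn:theta-to-0-bound}) supplies the deterministic bound $|T_i|\leq L\norm{W^{(i)}}^2/(d-1)$, so all conditional moments of $T_i$ given $X$ are controlled. Cauchy--Schwarz pointwise in $x$ combined with the $L^2(Z)$ triangle inequality and the definition $K_\Lambda=\sqrt{2\norm{\Lambda}_F^2+\tr(\Lambda)^2}$ (the natural constant for second moments of Gaussian quadratic forms weighted by $\Lambda$) delivers the expectation bound $\EE_{X\sim\Pi_d}U_d(X)\leq\theta(d)$. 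To upgrade this expectation bound to a tail bound, we exploit block-wise independence of the $T_i$ under $\Pi_d$ and apply Chebyshev to $\Var_X U_d(X)$, itself bounded via Equation~(\ref{eqn:theta-to-0-bound}), concluding that $U_d-\theta(d)$ exceeds $\sqrt{2Ll^2 K_\Lambda\log d/d}$ with probability $O(d^{-(1+\eta)})$ for some $\eta>0$. Combined with the other four pieces and the union bound above, this closes the proof.
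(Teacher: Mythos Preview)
Your overall architecture matches the paper exactly: the reduction via the Poisson number of jumps and stationarity to the bound $(1+kdT)\Prr{\Pi_d}(F_d^c)$, the union bound over the constituent sets, the trivial handling of $F_0^d$, the fourth-moment Chebyshev for $F_{1,d}$ and $F_{2,d}$, and the sub-Gaussian tail for $F_{4,d}$ are all correct and are precisely what the paper does.

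The gap is in your treatment of $F_{3,d}$. You claim Chebyshev applied to $\Var_X U_d(X)$ yields a tail probability of order $d^{-(1+\eta)}$; this is false. Bounding $U_d$ above by the sum $V_d = \frac{l^2 K_\Lambda}{d-1}\sum_{i=r+1}^d (\EE_W|\cdot|^2)^{1/2}$ of independent terms, each term lies in $[0,\,l^2 K_\Lambda L/(d-1)]$ by Equation~(\ref{eqn:theta-to-0-bound}), so $\Var_X V_d \leq (d-r)\,(l^2 K_\Lambda L)^2/(d-1)^2 \asymp 1/d$. The deviation threshold is $t = \sqrt{2Ll^2K_\Lambda \log d / d}$, so $t^2 \asymp (\log d)/d$ and Chebyshev gives only $\Var/t^2 \asymp 1/\log d$, far from $o(1/d)$. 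More generally, any $p$th-moment bound on a sum of $d$ independent terms of size $O(1/d)$ gives a centered $p$th moment of order $d^{-p/2}$, and dividing by $t^p \asymp (\log d / d)^{p/2}$ still yields only $(\log d)^{-p/2}$. Polynomial-moment methods cannot beat $1/\mathrm{polylog}(d)$ at this threshold.

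The paper closes this gap by applying Hoeffding's inequality to $V_d$, exploiting the uniform boundedness of each summand to obtain sub-Gaussian concentration: with $d-r$ summands each of range $l^2K_\Lambda L/(d-1)$, Hoeffding gives $\Pr(V_d - \EE V_d > t) \leq \exp\bigl(-c\, d\, t^2\bigr)$, and substituting $t^2 \asymp (\log d)/d$ produces the required $d^{-2}$ rate. Replacing ``Chebyshev'' by ``Hoeffding'' in your argument, with the same bounding $U_d\leq V_d$, would make your proof correct and essentially identical to the paper's.
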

\begin{proof}
Since the number of possible jumps times of the process $\vX $ in the interval $[0,T]$, $N_T$, is distributed as $N_T\sim \poisson(Tkd)$, we have
\[
  & \PP(\text{not }\lcr({\vX_d(t) \in F_d\quad  \forall 0\leq t\leq T}))\\
    & = \EE\EE\lcr[{\one_{\exists t\in[0,T]:\ \vX_d(t) \not\in F_d} \given N_T}]\\
    & \leq \EE\EE\lcr[{\one_{X^{(1):(d)}_0\not\in F_d} +  \sum_{\substack{t>0\\ N_t\neq N_{t^-}}} \one_{\vX_d(t) \not\in F_d\quad} \bigg\given N_T}]            \\
    & = \EE\lcr[{ (N_T+1) \Prr{X_d\sim \Pi_d} (X_d\not\in F_d)}] \\ 
    & = (Tkd+1) \Prr{X_d\sim \Pi_d} (X_d\not\in F_d)
\]
Thus it is sufficient to show that $\Prr{X_d\sim \Pi_d} (X_d\not\in F_d) = o(1/d)$. Applying subadditivity:
\[
  & \Prr{X_d\sim \Pi_d} (X_d\not\in F_d) \\
    & \leq \Prr{X_d\sim \Pi_d}\lcr({\abs{R_d (X_d^{(r+1):(d)}) -J} > d^{-1/8} +\frac{J(r-1)}{(d-1)}}) \\
    & \qquad\qquad + \Prr{X_d\sim \Pi_d}\lcr({\abs{S_d (X^{(r+1):(d)}) -J} > d^{-1/8} +\frac{J(r-1)}{(d-1)}}) \\ 
    & \qquad\qquad +  \Prr{X_d\sim \Pi_d}\lcr({U_d(X^{(r+1):(d)}) > \theta(d)+ \sqrt{2Ll^2 K_\Lambda \frac{\log d}{d}}}) \\
    & \qquad\qquad + \Prr{X_d\sim \Pi_d} \lcr({\norm{\grad\log\pi^{\otimes r}(X^{(1):(r)})} > 2Lkr\sqrt{\log d}})\ ,
\]
so it is sufficient to show that each of these four terms is individually $o(1/d)$.

It is obvious that $\EE_{X_d\sim \Pi_d} R_d (X_d^{(r+1):(d)})  = J \frac{d-r}{d-1}$. From distributional integ\-ration by parts (Theorem \ref{thm:ibps}) we also have $\EE_{X\sim \Pi_d} S_d (X^{(r+1):(d)})  = J\frac{d-r}{d-1}$.

For the first term, since $\grad\log\pi(Y)$ is subgaussian for $Y\sim\pi$ (see Lemma \ref{lem:gll-subg}), and hence has all of its polynomial moments:
\[ \scriptstyle
& \hspace{-2em}\Prr{X\sim \Pi_d}\lcr({\abs{R_d (X^{(r+1):(d)}) -J} > d^{-1/8} +\frac{J(r-1)}{(d-1)}})\\
    &\scriptstyle\leq \EEE{X\sim \Pi_d}\lcr({\abs{R_d (X^{(r+1):(d)}) -\EE_{X_d\sim \Pi_d} R_d (X_d^{(r+1):(d)}) }^4}) d^{1/2} \\ 
    &\scriptstyle = d^{1/2} \frac{\EEE{Y\sim \pi}\lcr({A(Y)^4}) + 3(d-2) \EEE{Y\sim \pi}\lcr({A(Y)^2})^2}{(d-1)^3} \\
    &\scriptstyle\leq \frac{6 }{(d-1)^{3/2}} \norm{\Lambda}^4 M_1
\]
for $M_1<\infty $ sufficiently large, where $A(Y) = (\grad\log\pi(Y)'\Lambda (\grad\log\pi(Y)) -J$.

For the second term, again since $\grad\log\pi(Y)$ has all of its polynomial mo\-ments for $Y\sim\pi$:
\[
&\scriptstyle\hspace{-2em} \Prr{X\sim \Pi_d}\lcr({\abs{S_d (X^{(r+1):(d)}) -J} > d^{-1/8} +\frac{J(r-1)}{(d-1)}})\\
    &\scriptstyle\leq \EEE{X\sim \Pi_d}\lcr({\abs{S_d (X^{(r+1):(d)}) -\EE_{X\sim \Pi_d} S_d (X^{(r+1):(d)})}^4}) d^{1/2} \\ 
    &\scriptstyle = d^{1/2} \frac{\EEE{Y\sim \pi}\lcr({B(Y)^4}) + 3(d-2) \EEE{Y\sim \pi}\lcr({B(Y)^2})^2}{(d-1)^3} \\
    &\scriptstyle \leq \frac{6 }{(d-1)^{3/2}} \norm{\Lambda}_F ^4 M_2
\]
for $M_2<\infty $ sufficiently large, where $B(Y) =\Lambda : (\grad^2\log\pi(Y)) - J$.

For the third term (letting $W\sim\Nn\lcr({0, l^2\Lambda})$ ):
\[
  & U_d(x^{(r+1):(d)}) \\
    & = \EEE{Z\sim\Nn_d\lcr({l^2\Lambda})}\abs{\sum_{i=r+1}^d \scriptstyle{\lcr({\log\frac{\pi(x^{(i)}+Z^{(i)})}{\pi(x^{(i)})} - \grad\log\pi(x^{(i)})Z^{(i)} -{Z^{(i)}}' \frac{\grad^2\log\pi(x^{(i)})}{2}Z^{(i)} })}  }\\
    &\leq \EEE{Z\sim\Nn_d\lcr({l^2\Lambda})}\sum_{i=r+1}^d \scriptstyle{\abs{\lcr({\log\frac{\pi(x^{(i)}+Z^{(i)})}{\pi(x^{(i)})} - \grad\log\pi(x^{(i)})Z^{(i)} -{Z^{(i)}}' \frac{\grad^2\log\pi(x^{(i)})}{2}Z^{(i)} })}  }\\
    & = \frac{1}{d-1}\sum_{i=r+1}^d \EE_W  \abs{\int_0^1 \scriptstyle{\frac{\grad\log\pi\lcr({x^{(i)}+\frac{hW}{\sqrt{d-1}}})- \grad\log\pi(x^{(i)}) }{1 / \sqrt{d-1}} W dh -  {W}'\frac{\grad^2\log\pi(x^{(i)})}{2} W}}\\
    &\leq \frac{1}{d-1}\sum_{i=r+1}^d \lcr({{\scriptstyle \EE_W  \norm{W}^4}})^{\frac{1}{2}} \\
    &\qquad \times\lcr({{\scriptstyle\EE_W} \abs{\frac{1}{{\scriptstyle\norm{W}^2}}{\int_0^1 {\scriptstyle \frac{\grad\log\pi\lcr({x^{(i)}+\frac{hW}{\sqrt{d-1}}})- \grad\log\pi(x^{(i)}) }{1 / \sqrt{d-1}} W dh -  {W}'\frac{\grad^2\log\pi(x^{(i)})}{2} W}}}^2})^{\frac{1}{2}}
\]
Using Isserlis' theorem (\citep{isserlis1916on}, equation (39)* therein)
\[
\EEE{W\sim\Nn\lcr({0, l^2\Lambda})}\norm{W}^4
  & = \EEE{W\sim\Nn\lcr({0, l^2\Lambda})}\lcr({\sum_{i=1}^k W_i^2})^2 
  && = \sum_{i=1}^k\sum_{j=1}^k \EEE{W\sim\Nn\lcr({0, l^2\Lambda})} [W_i^2 W_j^2]\\
  & = \sum_{i=1}^k\sum_{j=1}^k l^4 (\Lambda_{ii}\Lambda_{jj} +2 \Lambda_{ij}^2)
  && = l^4 \lcr({2\norm{\Lambda}_F^2 + \tr(\Lambda)^2})
  \]
Thus:
\[
& U_d(x^{(r+1):(d)}) \\
 	&\leq V_d(x^{(r+1):(d)}) \\
  	&: = \mbox{\normalsize $\frac{l^2K_\Lambda}{(d-1)}$} \sum_{i=r+1}^d \lcr({{\scriptstyle\EE_W} \abs{\frac{\int_0^1 {\scriptstyle \frac{\grad\log\pi\lcr({x^{(i)}+\frac{hW}{\sqrt{d-1}}})- \grad\log\pi(x^{(i)}) }{1 / \sqrt{d-1}} W dh -  {W}'\frac{\grad^2\log\pi(x^{(i)})}{2} W}}{{\scriptstyle\norm{W}^2}}}^2})^{\frac{1}{2}}
\]
and so:
\[
  \EE_{X\sim\Pi_d} V_d(X) = \theta(d) \frac{d-r}{d-1} \leq \theta(d)
\]  
Moreover, from Equation \ref{eqn:theta-to-0-bound}, for $x\in F_0$:
\[
\lcr({{\scriptstyle\EE_W} \abs{\frac{\int_0^1 {\scriptstyle \frac{\grad\log\pi\lcr({x^{(i)}+\frac{hW}{\sqrt{d-1}}})- \grad\log\pi(x^{(i)}) }{1 / \sqrt{d-1}} W dh -  {W}'\frac{\grad^2\log\pi(x^{(i)})}{2} W}}{{\scriptstyle\norm{W}^2}}}^2})^{\frac{1}{2}} \leq L
\]
Thus, by Hoeffding's Inequality (\citep{boucheron2013concentration}, theorem 2.8 therein):
\[
&\hspace{-2em}\Prr{X_d\sim \Pi_d}\lcr({U_d(X_d^{(r+1):(d)}) > \theta(d) + \sqrt{2l^2 L K_\Lambda \frac{\log d}{d}}}) \\
  & \leq \Prr{X_d\sim \Pi_d}\lcr({V_d(X_d^{(r+1):(d)}) > \theta(d) + \sqrt{2 l^2 L K_\Lambda \frac{\log d}{d}}}) \\
  & \leq \frac{1}{d^2}
\]

For the fourth (and last) term, since, $\grad\log\pi(X)$ is subgaussian with proxy variance $L$ for $X\sim\pi$ (see Lemma \ref{lem:gll-subg}), then $\norm{\grad\log\pi(X)}$ is subgaussian with proxy variance $L k^2$. Thus
\[
\Prr{X_d\sim \Pi_d} \lcr({\norm{\grad\log\pi(X_d^{(1)})} > 2k\sqrt{L\log d}}) \leq \frac{2}{d^2}
\]
Now, for $\lcr({\norm{\grad\log\pi^{\otimes r}(X_d^{(1):(r)})} > 2kr\sqrt{L\log d}})$ to occur, at least one block, indexed by $j\in\set{1,...,r}$, must have $\lcr({\norm{\grad\log\pi(X_d^{(j)})} > 2k\sqrt{L\log d}})$. Thus, 
\[
\Prr{X_d\sim \Pi_d} \lcr({\norm{\grad\log\pi^{\otimes r}(X_d^{(1):(r)})} > 2rk\sqrt{L\log d}}) \leq \frac{2 r}{d^2}
\]

Thus:
\[
  1- \PP(\vX_d(t) \in F_d\quad  \forall 0\leq t\leq T)
    & \leq (Tkd+1) \Prr{X_d\sim \Pi_d} (X_d\not\in F_d) \to 0
\]

\end{proof}
\subsubsection{Uniform Convergence of Generator Evaluations on Large Sets}
For each $h\in C(\RR^k)$ let $h_d = h\circ \rho_d$. For the remainder of this section, $Z\sim \Nn\lcr({0, \frac{l^2}{(d-1)} I_d\otimes\Lambda})$ unless stated otherwise, and $Z^{(1)} \sim \Nn_d\lcr({l^2 \Lambda})$ is the first $k$ component block of $Z$.

We introduce an intermediate object, $\tilde G_d^{l,\Lambda}$ on $\set{h\circ \rho_d : h\in C_c^\infty}$, which res\-embles, but is not, a generator for a diffusion process. Take $\tilde G_d^{l,\Lambda}$ given by:
\[
  \tilde G_d^{l,\Lambda} h_d (x) 
    & = \frac{k l^2}{2} \EE_Z[ 1\wedge e^{B_d(x,Z^{(r+1):(d)})}]\ [I_r\otimes\Lambda] : \grad^2 h(x^{(1)}) \\
    &\qquad\qquad + k l^2 \EE_Z[ 1\wedge e^{B_d(x,Z^{(r+1):(d)})}; B_d(x,Z^{(r+1):(d)})<0]\  \\
    &\qquad\qquad\qquad\times (\grad\log\pi^{\otimes r} (x^{(1):(r)}))' [I_r\otimes\Lambda] (\grad h(x^{(1):(r)}))
\]
where 
\[
	B_d(x,Z^{(r+1):(d)}) = \sum_{i=r+1}^d \epsilon(x^{(i)},Z^{(i)})
\] 
and 
\[
	\epsilon(x^{(i)},Z^{(i)}) = \log\frac{\pi(x^{(i)}+Z^{(i)})}{\pi(x^{(i)})} \ .
\] 

We will show that for any $h\in C_c^\infty(\RR^rk)$ we have both:
\[
  \lim_{d\to\infty}\sup_{x\in F_d} \abs{\hat G_d^{l,\Lambda} h_d(x) - \tilde G_d^{l,\Lambda} h_d (x)} & = 0
\]
which is verified in Lemma \ref{lem-rn06-a3}, and
\[
  \lim_{d\to\infty}\sup_{x\in F_d} \abs{\tilde G_d^{l,\Lambda} h_d(x) - G^{l,\Lambda} h (x)} & = 0 
\]
which is verified through Lemma \ref{lem-rn06-a6}.

Then, since $G^{l,\Lambda} h (x^{(1)}) = [G^{l,\Lambda} h]\circ \rho_d (x)$, we will have verified Equation \ref{eqn:ek-Gf-conv}.

\begin{lemma}[$\tilde G_d^{l,\Lambda}$ is close to $\hat G_d^{l,\Lambda}$]\label{lem-rn06-a3}
\[
  \lim_{d\to\infty}\sup_{x\in F_d} \abs{\hat G_d^{l,\Lambda} h_d(x) - \tilde G_d^{l,\Lambda} h_d (x)} = 0
\]
\end{lemma}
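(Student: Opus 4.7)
The plan is to Taylor expand $h$ to second order in $Z^{(1):(r)}$ and, after taking expectation against $1\wedge e^{A_d+B_d}$ (where $A_d(x,Z):=\sum_{i=1}^r \epsilon(x^{(i)},Z^{(i)})$ is the contribution of the first $r$ blocks to the log-acceptance ratio), match the resulting terms against $\tilde G_d^{l,\Lambda} h_d(x)$. Since $h\in C_c^\infty(\RR^{rk})$, we write
\[
h(x^{(1):(r)}+Z^{(1):(r)})-h(x^{(1):(r)}) = {Z^{(1):(r)}}'\grad h(x^{(1):(r)}) + \tfrac{1}{2}{Z^{(1):(r)}}'\grad^2 h(x^{(1):(r)})\,Z^{(1):(r)} + R,
\]
with $|R|\leq C_h\norm{Z^{(1):(r)}}^3$ and $C_h$ depending only on $\norm{\grad^3 h}_\infty$. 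Since $\EE\norm{Z^{(1):(r)}}^3 = O(d^{-3/2})$ and $\hat G_d^{l,\Lambda}$ carries a prefactor of $kd$, the $R$ contribution is $O(d^{-1/2})$ uniformly in $x$, and it suffices to match the quadratic and linear pieces.

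For the quadratic piece, the main step is to replace $A_d$ by $0$ inside $1\wedge e^{A_d+B_d}$. On $F_{4,d}$, the $L$-Lipschitz property of $\grad\log\pi$ yields $|A_d|\leq \norm{\grad\log\pi^{\otimes r}(x^{(1):(r)})}\norm{Z^{(1):(r)}}+\tfrac{L}{2}\norm{Z^{(1):(r)}}^2 \leq 2kr\sqrt{L\log d}\,\norm{Z^{(1):(r)}}+\tfrac{L}{2}\norm{Z^{(1):(r)}}^2$, whose typical size is $O(\sqrt{(\log d)/d})$. Combining the elementary $1$-Lipschitz bound $|1\wedge e^{A+B}-1\wedge e^B|\leq |e^{A}-1|$ with Gaussian moment control on $Z^{(1):(r)}$, the replacement error vanishes after being amplified by $kd\cdot\EE[\norm{Z^{(1):(r)}}^2]=O(1)$. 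Independence of $Z^{(1):(r)}$ and $B_d$ then decouples the expectation:
\[
kd\cdot\tfrac{1}{2}\EE[{Z^{(1):(r)}}'\grad^2 h\,Z^{(1):(r)}]\EE[1\wedge e^{B_d}] = \tfrac{kdl^2}{2(d-1)}(I_r\otimes\Lambda):\grad^2 h\cdot\EE[1\wedge e^{B_d}],
\]
which equals the first piece of $\tilde G_d^{l,\Lambda} h_d$ up to the harmless factor $d/(d-1)\to 1$.

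For the linear piece, the key tool is Gaussian integration by parts. Let $u={Z^{(1):(r)}}'\grad h(x^{(1):(r)})$ and $\delta={Z^{(1):(r)}}'\grad\log\pi^{\otimes r}(x^{(1):(r)})$; these are jointly mean-zero Gaussian with $\Cov(u,\delta) = \tfrac{l^2}{d-1}\grad h(x^{(1):(r)})'(I_r\otimes\Lambda)\grad\log\pi^{\otimes r}(x^{(1):(r)})$ and both are independent of $B_d$. A Taylor expansion of $\log\pi(x^{(i)}+Z^{(i)})$ around $x^{(i)}$ gives $A_d=\delta+\eta$ with $|\eta|\leq \tfrac{L}{2}\norm{Z^{(1):(r)}}^2$ on $F_0^d$. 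Conditioning on $B_d$ and applying Stein's lemma with $\phi(y)=1\wedge e^{y+B_d}$ (whose weak derivative is $e^{y+B_d}\one\{y+B_d<0\}=(1\wedge e^{y+B_d})\one\{y+B_d<0\}$) yields
\[
\EE[u\cdot(1\wedge e^{\delta+B_d})] = \Cov(u,\delta)\cdot\EE[(1\wedge e^{\delta+B_d})\one\{\delta+B_d<0\}].
\]
Moment control on $\eta$ bounds the error of using $\delta$ in place of $A_d$, and moment control on $\delta$ bounds the error of replacing $\delta$ by $0$ inside the remaining expectation. After multiplication by $kd$ this recovers $kl^2\grad h'(I_r\otimes\Lambda)\grad\log\pi^{\otimes r}\cdot\EE[(1\wedge e^{B_d})\one\{B_d<0\}]$, the second piece of $\tilde G_d^{l,\Lambda} h_d$.

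The main obstacle is uniformity of all these approximations on $F_d$ despite the amplifying factor $kd$ in $\hat G_d^{l,\Lambda}$. Since $kd\cdot\EE[\text{Gaussian quadratic in }Z^{(1):(r)}]=\Theta(1)$, every substitution of $A_d$ by $\delta$ or by $0$ inside the acceptance ratio must contribute only $o(1/d)$ uniformly. This is precisely the role of $F_{4,d}$: its $\sqrt{\log d}$ bound on $\norm{\grad\log\pi^{\otimes r}(x^{(1):(r)})}$ forces $|A_d|=O(\sqrt{(\log d)/d})$ after integrating against the Gaussian density of $Z^{(1):(r)}$, and combined with the 1-Lipschitz inequality for $a\mapsto 1\wedge e^a$ this delivers the required $o(1/d)$ errors uniformly in $x\in F_d$.
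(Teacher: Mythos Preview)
Your approach differs from the paper's: you Taylor-expand $h$ and then invoke Stein's lemma for the linear term, whereas the paper Taylor-expands the \emph{acceptance function} $\gamma(z,x)=\EE_{Z^{(r+1):(d)}}\bigl[1\wedge e^{\Ee(z,x)+B_d}\bigr]$ in $z$ around $z=0$. The paper's route has the advantage that its zeroth- and first-order coefficients are \emph{exactly} $\EE[1\wedge e^{B_d}]$ and $\EE[e^{B_d};B_d<0]$, so the terms of $\tilde G_d^{l,\Lambda}$ appear directly and the discontinuity of $y\mapsto e^y\one\{y<0\}$ is pushed into the integral remainder, where it is harmless because it is multiplied by $\norm{Z^{(1):(r)}}^2$ and hence contributes $O(d^{-1/2}\log d)$ after the $kd$ amplification. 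Your handling of the cubic remainder from $h$ and of the quadratic piece is correct (though for the latter the plain $1$-Lipschitz bound $|1\wedge e^{A+B}-1\wedge e^B|\le|A|$ is simpler than the $|e^A-1|$ version you state).

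The gap is in the linear piece. After Stein's lemma you arrive at $kd\,\Cov(u,\delta)\cdot\EE\bigl[e^{\delta+B_d}\one\{\delta+B_d<0\}\bigr]$ and assert that ``moment control on $\delta$ bounds the error of replacing $\delta$ by $0$''. This does not work as stated: the map $y\mapsto e^{y}\one\{y<0\}$ has a jump of size $1$ at $y=0$, so no Lipschitz argument applies. Writing $e^{y}\one\{y<0\}=(1\wedge e^{y})-\one\{y\ge 0\}$ splits the replacement error into a Lipschitz-controlled part bounded by $\EE|\delta|$ and an indicator part $\bigl|\PP(\delta+B_d\ge 0)-\PP(B_d\ge 0)\bigr|$. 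The second part is not controlled by any moment of $\delta$ alone; it requires anti-concentration of $B_d$ near $0$ (for instance a bound on its density). Since $kd\,\Cov(u,\delta)=O(\sqrt{\log d})$ on $F_{4,d}$, you need this error to be $o(1/\sqrt{\log d})$ uniformly on $F_d$, and that genuinely demands information about the distribution of $B_d$ --- essentially the Gaussian approximation to $B_d$ built from $F_{1,d}$, $F_{2,d}$, $F_{3,d}$ and used in Lemma~\ref{lem-rn06-a6}. The gap is fillable by importing that machinery, but not by the mechanism you cite; your final paragraph's accounting (substitutions contributing ``$o(1/d)$'') is likewise mis-stated and does not address the discontinuity.
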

\begin{proof}
Notice that: 
\[
  & \hat G_d^{l,\Lambda} h_d (x)\\
    & = \scriptstyle k d \EEE{Z^{(1):(r)}} \lcr[{\lcr({h(x^{(1):(r)}+Z^{(1):(r)}) - h(x^{(1):(r)})})\EEE{Z^{(r+1):(d)}}\lcr({1\wedge \prod_{i=1}^d \frac{\pi(x^{(i)}+Z^{(i)})}{\pi(x^{(i)})}})}] \ .
\]

Then letting
\[
	\Ee(z^{(1):(r)}, x) = \sum_{j=1}^r\epsilon(x^{(j)},z^{(j)}) \ ,
\]
and
\[
  \gamma(z^{(1):(r)}, x)
    & = \EEE{Z^{(r+1):(d)}}\lcr({1\wedge e^{\Ee(z^{(1):(r)}, x)+B_d(x,Z^{(r+1):(d)})}})\ ,
\]

from the integral form of Taylor's remainder theorem (which is valid as long as the derivative occurring in the integral remainder is defined in a weak sense), we have that:
\[
  &\gamma(z^{(1):(r)}, x) \\
    & =  \EEE{Z^{(r+1):(d)}}\lcr({1\wedge e^{B_d(x,Z^{(r+1):(d)})}})\\
    & \quad + {z^{(1):(r)}} '\grad\log\pi^{\otimes r}(x^{(1):(r)}) {\scriptstyle\EEE{Z^{(r+1):(d)}}\lcr({e^{B_d(x,Z^{(r+1):(d)})}; B_d(x,Z^{(r+1):(d)})<0})}\\
    & \quad + \int_0^1 (1-\eta) \lcr[{{z^{(1):(r)}} '\grad^2\log\pi^{\otimes r}(x^{(1):(r)}+\eta z^{(1):(r)}) z^{(1):(r)} }.\\
    &\qquad\qquad \lcr.{+ ({z^{(1):(r)}} '\grad\log\pi^{\otimes r}(x^{(1):(r)}+\eta z^{(1):(r)}))^2}] \\
    &\qquad\qquad {\scriptstyle \times\EEE{Z^{(r+1):(d)}}\lcr({e^{\Ee(z^{(1):(r)}, x) + B_d(x,Z^{(r+1):(d)})}; \Ee(z^{(1):(r)}, x) +B_d(x,Z^{(r+1):(d)})<0})} \ d\eta
\]
Thus,
\[
  &\hat G_d h_d (x)\\
    & = {\scriptstyle kd \EEE{Z^{(1):(r)}} \lcr[{\lcr({h(x^{(1):(r)}+Z^{(1):(r)}) - h(x^{(1):(r)})})\gamma(z^{(1):(r)}, x)  }]}\\
%%%%%%%%%%%%%%%%%%%%%%%%%%%%%%%%%%%%%%%%%%%%%%%%%%%%%%%%%%%%%%%%%%%%%%%%%%%%%%%%%%%%%%	
	& {\scriptstyle= kd \EEE{Z^{(1):(r)}} \lcr[{\lcr({h(x^{(1):(r)}+Z^{(1):(r)}) - h(x^{(1):(r)})})}] \lcr[{\EEE{Z^{(r+1):(d)}}\lcr({1\wedge e^{B_d(x,Z^{(r+1):(d)})}}) }]}\\
	&\qquad {\scriptstyle+ kd \grad\log\pi^{\otimes r}(x^{(1):(r)})' \EEE{Z^{(1):(r)}} \lcr[{{Z^{(1):(r)}}\lcr({h(x^{(1):(r)}+Z^{(1):(r)}) - h(x^{(1):(r)})})}] }\\
	&\qquad\qquad {\scriptstyle\times \EEE{Z^{(r+1):(d)}}\lcr({e^{B_d(x,Z^{(r+1):(d)})}; B_d(x,Z^{(r+1):(d)})<0})}\\	
    & \qquad {\scriptstyle+ kd \int_0^1 (1-\eta) \EEE{Z^{(1):(r)}}\lcr[{\lcr({h(x^{(1):(r)}+Z^{(1):(r)}) - h(x^{(1):(r)})}) }.}\\	
    &\qquad\qquad\qquad {\scriptstyle\times\lcr.{\lcr({{Z^{(1):(r)}} '\grad^2\log\pi^{\otimes r}(x^{(1):(r)}+\eta Z^{(1):(r)}) Z^{(1):(r)}}.}. }\\
    &\qquad\qquad\qquad\qquad {\scriptstyle\lcr.{\lcr.{+ ({Z^{(1):(r)}} '\grad\log\pi^{\otimes r}(x^{(1):(r)}+\eta Z^{(1):(r)}))^2})}] }\\
    &\qquad\qquad {\scriptstyle\times\lcr.{\EEE{Z^{(r+1):(d)}}\lcr({e^{\Ee(z^{(1):(r)}, x) + B_d(x,Z^{(r+1):(d)})}; \Ee(z^{(1):(r)}, x)+B_d(x,Z^{(r+1):(d)})<0})  \ d\eta}]}\\
\]
Thus, since $1\wedge e^{...} \in [0,1]$, using the bound of $\norm{\grad\log\pi^{\otimes r}}$ for $x\in F_{d,4}$:
\[
	&\sup_{x\in F_d}\abs{\hat G_d h_d - \tilde G_d h_d}\\
		& \leq {\scriptstyle \sup_{x\in F_d} \lcr\vert{kd \EE_Z [h(x^{(1):(r)}+Z^{(1):(r)})-h(x^{(1):(r)})] - k \frac{l^2}{2} [I_r\otimes\Lambda]: \grad^2 h(x^{(1):(r)})}\vert} \\
		&\qquad + {\scriptstyle 2kr L^{1/2} \sup_{x\in F_d} \lcr[{\sqrt{\log d} \lcr\Vert{kd\EE_Z [Z^{(1):(r)}(h(x^{(1):(r)}+Z^{(1):(r)})-h(x^{(1):(r)}))]}.}. }\\
		&\qquad\qquad\qquad\qquad\qquad\qquad {\scriptstyle - \lcr.{\lcr.{kl^2 [I_r\otimes\Lambda] (\grad h(x^{(1):(r)}))}\Vert}]}\\
    & \qquad {\scriptstyle + \sup_{x\in F_d}\lcr\vert{kd\int_0^1 (1-\eta)\EEE{Z^{(1):(r)}}\lcr[{\lcr({h(x^{(1):(r)}+Z^{(1):(r)}) - h(x^{(1):(r)})}) }.}.}\\
    &\qquad\qquad\qquad\qquad{\scriptstyle \lcr.{\lcr.{ \times \lcr({{Z^{(1):(r)}} '\grad^2\log\pi^{\otimes r}(x^{(1):(r)}+\eta Z^{(1):(r)}) Z^{(1):(r)} }.}.}.}\\
    &\qquad\qquad\qquad\qquad {\scriptstyle \lcr.{\lcr.{\lcr.{ + ({Z^{(1):(r)}} '\grad\log\pi^{\otimes r}(x^{(1):(r)}+\eta Z^{(1):(r)}))^2})}]\ d\eta }\vert}    
\]

By Lemma \ref{lem-nr06-a2}, the first terms both go to $0$ as $d\to\infty$. Thus we need only handle the last (remainder) term for $x\in F_d$. We have (by Taylor's Theorem):
\[
	&\scriptstyle
		\int_0^1 (1-\eta) \EE_{Z^{(1):(r)}}\lcr\vert{\lcr({h(x^{(1):(r)}+Z^{(1):(r)}) - h(x^{(1):(r)})})}. \\
		&\qquad \qquad \scriptstyle \lcr.{ \times \lcr({{Z^{(1):(r)}} '\grad^2\log\pi^{\otimes r}(x^{(1):(r)}+\eta Z^{(1):(r)}) Z^{(1):(r)}}.}. \\
		&\qquad\qquad\qquad\qquad \scriptstyle \lcr.{\lcr.{+ ({Z^{(1):(r)}} '\grad\log\pi^{\otimes r}(x^{(1):(r)}+\eta Z^{(1):(r)}))^2})}\vert d\eta\\
	& \scriptstyle \leq  \frac{1}{2}\EE_{Z^{(1):(r)}}\lcr({\sup_{y\in\RR^{rk}} \norm{\grad h(y)}})\norm{Z^{(1):(r)}}^3 \\
	& \qquad \qquad \scriptstyle \times\lcr({L + \lcr({\norm{\grad\log\pi^{\otimes r}(x^{(1):(r)})} + \norm{Z^{(1):(r)}} L})^2})\\
	&\scriptstyle \myeqm{x\in F_d} O(d^{-3/2}\log(d)^2)		
\]
Thus the final term in the bound on 
\[
	\sup_{x\in F_d}\abs{\hat G_d h_d - \tilde G_d h_d} 
\]
is $O(d^{-1/2} \log(d)^2)$, so the whole bound goes to $0$.
\end{proof}

\begin{lemma}[$\tilde G_d^{l,\Lambda}$ is close to $G^{l,\Lambda}$]\label{lem-rn06-a6}
\[
\lim_{d\to\infty}\sup_{x\in F_d} \abs{2\Phi(-l\sqrt{I} /2)  - \EEE{Z^{(r+1):(d)}}\lcr[{1\wedge e^{B_d(x,Z^{(r+1):(d)})}}] } = 0
\]
and:
\[
\lim_{d\to\infty}\sup_{x\in F_d} \abs{\Phi(-l\sqrt{I} /2)  - \EEE{Z^{(r+1):(d)}}\lcr[{e^{B_d(x,Z^{(r+1):(d)})}; B_d(x,Z^{(r+1):(d)})<0}] } = 0
\]
and hence:
\[
  \lim_{d\to\infty}\sup_{x\in F_d} \abs{\tilde G_d^{l,\Lambda} h_d(x) - G^{l,\Lambda} h (x^{(1):(r+1)})} & = 0 
\]

\end{lemma}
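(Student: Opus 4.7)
The plan is to first establish the two scalar convergences involving $\EE[1\wedge e^{B_d}]$ and $\EE[e^{B_d};B_d<0]$ uniformly on $F_d$; the third statement on $\tilde G_d^{l,\Lambda}h_d - G^{l,\Lambda}h$ then follows by substitution into the explicit formula for $\tilde G_d^{l,\Lambda}$, using $a(l)=2\Phi(-l\sqrt{J}/2)$, the boundedness of $\grad h$ and $\grad^2 h$ (since $h\in C_c^\infty$), and the bound on $\norm{\grad\log\pi^{\otimes r}(x^{(1):(r)})}$ provided by $F_{d,4}$. (I read $\sqrt{I}$ in the statement as $\sqrt{J}$ with $J=\Lambda:\Sigma$, and $x^{(1):(r+1)}$ as $x^{(1):(r)}$.)

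The main tool is a second-order Taylor expansion in $Z$ of each summand of $B_d(x,Z^{(r+1):(d)})=\sum_{i=r+1}^d \epsilon(x^{(i)},Z^{(i)})$, producing $B_d = T_1+T_2+R$ with $T_1=\sum_i \grad\log\pi(x^{(i)})'Z^{(i)}$, $T_2=\tfrac{1}{2}\sum_i (Z^{(i)})'\grad^2\log\pi(x^{(i)})Z^{(i)}$, and $R$ the integrated remainder already controlled in Lemma \ref{lem:theta-to-0}. Conditional on $x$, $T_1\sim\Nn(0,l^2 R_d(x^{(r+1):(d)}))$ exactly, with $R_d$ within $d^{-1/8}+J(r-1)/(d-1)$ of $J$ uniformly on $F_{d,1}$; $T_2$ has conditional mean $-\tfrac{l^2}{2}S_d(x^{(r+1):(d)})$, within the same tolerance of $-l^2 J/2$ on $F_{d,2}$, and conditional variance $O(1/d)$ by Isserlis combined with $\norm{\grad^2\log\pi}\leq L$ a.e.\ from Proposition \ref{prop:summary-of-gll}(a); and $\EE_Z|R|\leq U_d \leq \theta(d)+\sqrt{2Ll^2 K_\Lambda (\log d)/d}\to 0$ uniformly on $F_{d,3}$.

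For the first scalar claim, I would use that $a\mapsto 1\wedge e^a$ is $1$-Lipschitz to replace $B_d$ by $T_1+T_2$ at cost $\EE|R|$, and then $T_2$ by the constant $-l^2 J/2$ at cost $\EE|T_2+l^2 J/2|\leq \tfrac{l^2}{2}|S_d-J|+\sqrt{\Var T_2}$, both vanishing uniformly on $F_d$. The remaining expectation is explicit: for $W\sim\Nn(m,s^2)$, completing the square gives
\[
\EE[1\wedge e^W]=\Phi(m/s)+e^{m+s^2/2}\Phi(-(m+s^2)/s),
\]
and specializing to $m=-l^2 J/2$, $s^2=l^2 R_d(x^{(r+1):(d)})$ yields an expression continuous in $R_d$ at $R_d=J$ with limiting value $2\Phi(-l\sqrt{J}/2)$; uniformity on $F_d$ comes from the quantitative bound on $|R_d-J|$ provided by $F_{d,1}$.

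For the second claim I would write $e^a \mathbf{1}_{a<0}=(1\wedge e^a)-\mathbf{1}_{a\geq 0}$, reducing the problem to showing $\PP(B_d\geq 0)\to \Phi(-l\sqrt{J}/2)$ uniformly on $F_d$. This is the main obstacle, because the indicator is discontinuous and the $1$-Lipschitz trick no longer applies. I would handle it by a sandwich: with $\delta_d=\EE|T_2+l^2 J/2|+\EE|R|\to 0$ uniform on $F_d$, Markov's inequality applied to $|T_2+l^2 J/2+R|$ yields
\[
\PP(T_1\geq \tfrac{l^2 J}{2}+\epsilon)-\delta_d/\epsilon \leq \PP(B_d\geq 0)\leq \PP(T_1\geq \tfrac{l^2 J}{2}-\epsilon)+\delta_d/\epsilon,
\]
and choosing $\epsilon_d\to 0$ with $\epsilon_d\gg\sqrt{\delta_d}$, together with continuity of the Gaussian c.d.f.\ (recalling $T_1|x\sim\Nn(0,l^2 R_d)$ with $R_d\to J$ uniformly on $F_{d,1}$), delivers $\PP(B_d\geq 0)=\Phi(-l\sqrt{J}/2)+o(1)$ uniformly on $F_d$. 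Combining with the first claim yields the second, and plugging both limits into the formula for $\tilde G_d^{l,\Lambda}$ completes the proof.
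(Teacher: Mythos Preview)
Your proposal is essentially correct and closely parallels the paper's proof. The decomposition $B_d=T_1+T_2+R$ is a minor relabelling of the paper's: the paper sets $A_d=T_1-\tfrac{l^2}{2}R_d(x^{(r+1):(d)})$ (i.e.\ your $T_1$ shifted by a deterministic constant depending on $x$), so that $A_d$ is exactly $\Nn(-l^2R_d/2,\,l^2R_d)$ and the formula $\EE[1\wedge e^{A_d}]=2\Phi(-l\sqrt{R_d}/2)$ is clean; the difference $B_d-A_d$ is then $W_d+R$ with $\EE|W_d|\to 0$ handled by a separate lemma. Your replacement of $T_2$ by $-l^2J/2$ does the same job with $m=-l^2J/2$, $s^2=l^2R_d$, at the cost of $m+s^2/2\neq 0$; the limit is the same by continuity. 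For the truncated expectation the paper argues directly on $q_d=(e^{A_d};A_d<0)-(e^{B_d};B_d<0)$ via a case split on the signs of $A_d,B_d$, arriving at $\PP(|A_d-B_d|>\delta_d)+\PP(|A_d|\leq\delta_d)$ with $\delta_d=\sqrt{\phi_d}$; your route via $e^a\mathbf{1}_{a<0}=(1\wedge e^a)-\mathbf{1}_{a\geq 0}$ and a Markov sandwich on $\PP(B_d\geq 0)$ is equivalent and arguably tidier.

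There is one genuine slip in the final step. You invoke $F_{d,4}$ to bound $\norm{\grad\log\pi^{\otimes r}(x^{(1):(r)})}$, but that bound is only $O(\sqrt{\log d})$, and since the second scalar convergence inherits the rate of $\theta(d)$ (for which only $\theta(d)\to 0$ is known, with no explicit rate), the product need not vanish. The paper instead observes that because $h\in C_c^\infty$, the function $x\mapsto \norm{\grad\log\pi^{\otimes r}(x)}\,\norm{\grad h(x)}$ is continuous and vanishes off the compact support of $\grad h$, hence is uniformly bounded by some $M_h<\infty$. With that constant replacing your $\sqrt{\log d}$ factor, the substitution argument goes through exactly as you describe. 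The set $F_{d,4}$ is used elsewhere (in the approximation $\hat G_d\approx\tilde G_d$), not here.
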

\begin{proof}
Let 
\[
	& A_d(x,Z^{(r+1):(d)}) \\
	& = \sum_{i=r+1}^d \lcr[{\grad\log\pi(x^{(i)})' Z^{(i)} -\frac{l^2}{2(d-1)} \grad\log\pi(x^{(i)})' \Lambda \grad\log\pi(x^{(i)})}]\]
and let 
\[
	& W_d (x^{(1):(d)}) \\
		& =  \frac{1}{2} \sum_{i = r+1}^d \scriptstyle\lcr[{ {Z^{(i)}} ' [\grad^2 \log \pi(x^{(i)})] Z^{(i)} + \frac{l^2}{(d-1)} (\grad \log\pi(x^{(i)}))' \Lambda (\grad \log\pi(x^{(i)}))}]
\]

Thus, since $y\mapsto 1\wedge e^{y}$ is 1-Lipschitz,
\[
&\hspace{-2em}
\abs{\EEE{Z^{(r+1):(d)}} \lcr[{1\wedge e^{A_d(x,Z^{(r+1):(d)})}}] - \EEE{Z^{(r+1):(d)}}\lcr[{1\wedge e^{B_d(x,Z^{(r+1):(d)})}}] } \\
	& \leq \EE\abs{W_d(x)}+ U_d(x) \
\]
where $U_d$ is defined in the Subsection \ref{subsec:large-sets}. 

Let $\phi_d= \sup_{x\in F_d}\lcr({\EE\abs{W_d(x)}+ \theta(d) + \sqrt{2Ll^2 K_\Lambda\frac{\log d}{d}}})$. By Lemma \ref{lem:theta-to-0} and Lemma \ref{lem-Tr-Err-bd}, $\phi_d\to 0$.

For the second result, let 
\[
	& q_d(x,Z^{(r+1):(d)}) \\
	& \qquad \scriptstyle = \lcr({e^{A_d(x,Z^{(r+1):(d)})}; A_d(x,Z^{(r+1):(d)})<0}) - \lcr({e^{B_d(x,Z^{(r+1):(d)})}; B_d(x,Z^{(r+1):(d)})<0})
\]
and let $\delta_d = \sqrt{\phi_d}$. For simplicity in the rest of the proof, we abbreviate $q_d(x,Z^{(r+1):(d)})$, $A_d(x,Z^{(r+1):(d)})$, $B_d(x,Z^{(r+1):(d)})$ as $q_d, A_d, B_d$ respectively.
\[
	\EE_Z \abs{q_d} \leq \delta_d \PP(\abs{q_d}\leq \delta_d) + \PP(\abs{q_d}> \delta_d)
\]
The first term is $O(\delta_d)$, uniformly in $x$, so its $\sup_{x\in F_d}$ goes to $0$.

The second term can be bounded as:
\[
	\PP_Z(\abs{q_d}> \delta_d)
		& = \PP_Z(\abs{q_d}> \delta_d;\ A_d(x,Z)<0;\ B_d <0)\\
			&\qquad  + \PP_Z(\abs{q_d}> \delta_d;\ A_d \geq 0;\ B_d < 0)\\
			&\qquad  + \PP_Z(\abs{q_d}> \delta_d;\ A_d < 0;\ B_d \geq 0)\\
		& \leq \PP_Z(\abs{A_d-B_d}> \delta_d;\ A_d<0;\ B_d <0)\\
			&\qquad  + \PP_Z(A_d \geq 0;\ B_d < 0)+\PP_Z(A_d < 0;\ B_d \geq 0)\\
		& \leq \PP_Z(\abs{A_d-B_d}> \delta_d;\ A_d<0;\ B_d <0)\\
			&\qquad  + \PP_Z(\abs{A_d-B_d}> \delta_d;\ A_d \geq 0;\ B_d < 0)\\
				&\qquad\qquad+\PP(\abs{A_d-B_d}> \delta_d;\ A_d < 0;\ B_d \geq 0)\\
			&\qquad  + \PP_Z(\abs{A_d-B_d}\leq \delta_d;\ A_d \geq 0;\ B_d < 0)\\
				&\qquad\qquad+\PP_Z(\abs{A_d-B_d}\leq \delta_d;\ A_d < 0;\ B_d \geq 0)\\
		& \leq \PP_Z(\abs{A_d-B_d}> \delta_d)\\
			&\qquad  + \PP_Z(-\delta_d \leq A_d\leq \delta_d)		
\]
By Markov's Inequality, uniformly in $x\in F_d$
\[\PP_Z(\abs{A_d-B_d}> \delta_d) \leq \frac{1}{\delta_d}\phi_d \leq\sqrt{\phi_d}\] 

Next, Since $A_d\sim \Nn(-l^2 R_d/2, l^2 R_d)$, and since $\abs{R_d - J} \leq d^{-1/8} + J\frac{r-1}{d-1}$ on $F_d$, then, since $J> 0$ we have for all $\epsilon>0$, (for $d$ sufficiently large),  
\[
	\sup_{x\in F_d} \PP(-\delta_d < A_d <\delta_d) \leq \epsilon + \PP \lcr[{\Nn(-J/2, J)\in (-\delta_d , \delta_d)}]
\]
Thus, $\lim_{d\to\infty} \sup_{x\in F_d} \PP(-\delta_d < A_d <\delta_d) = 0$.

Now, since $A_d\sim \Nn(-l^2 R_d/2, l^2 R_d)$, by Proposition \ref{prop-censored-LN-mean}:
\[
	\EE[1\wedge e^{A_d}] 
		& = 2\Phi(-l \sqrt{R_d}/2 )
\]
Thus, because $J>0$, and since $y\mapsto \Phi(-l \sqrt{y}/2 )$ is uniformly continuous on $[J-d^{-1/8}- J\frac{r-1}{d-1}, J+d^{-1/8}+ J\frac{r-1}{d-1}]$ for $d$ sufficiently large, we have:
\[	
	& \lim_{d\to\infty}\sup_{x\in F_d} \abs{\EE[1\wedge e^{A_d}] - 2\Phi(-l\sqrt{J} /2) } \\
	& = \lim_{d\to\infty}\sup_{x\in F_d} \abs{ 2\Phi(-l \sqrt{R_d}/2 ) - 2\Phi(-l\sqrt{J} /2) }\\
	& = 0
\]
Analogously, for the truncated expectation:
\[	
	& \lim_{d\to\infty}\sup_{x\in F_d} \abs{\EE[1\wedge e^{A_d}; A_d<0] - \Phi(-l\sqrt{J} /2) } \\
	& = \lim_{d\to\infty}\sup_{x\in F_d} \abs{ \Phi(-l \sqrt{R_d}/2 ) - \Phi(-l\sqrt{J} /2) } \\
	& = 0
\]

Finally, since $h$ has compact support, and since the functions  $\norm{\grad^2 h(x)}_F$ and $\norm{\grad\log\pi^{\otimes r}(x)} \norm{\grad h(x)}$ are both continuous, then they are both uniformly bounded over $x\in\RR^k$ by $M_h<\infty$, say. Hence:
\[
  	& \lim_{d\to\infty}\sup_{x\in F_d} \abs{\tilde G_d^{l,\Lambda} h_d(x) - G^{l,\Lambda} h (x)} \\
  		& \quad \leq \scriptstyle \lim_{d\to\infty}\sup_{x\in F_d} \frac{kl^2}{2} \abs{2\Phi(-l\sqrt{I} /2)  - \EEE{Z^{(r+1):(d)}}\lcr[{1\wedge e^{B_d}}] }  \norm{\Lambda}_F \norm{\grad^2 h(x)}_F \\
  		&\qquad\qquad\qquad\qquad \scriptstyle  + kl^2 \abs{\Phi(-l\sqrt{I} /2)  - \EEE{Z^{(r+1):(d)}}\lcr[{e^{B_d}; B_d<0}] } \norm{\grad\log\pi(x)} \norm\Lambda \norm{\grad h(x)}\\
%%%%%%%%%%%%%%%%%%%%%%%%%%%%%%%%%%%  		
  		&\quad \leq \scriptstyle  M_h \lim_{d\to\infty}\sup_{x\in F_d} \frac{kl^2}{2} \abs{2\Phi(-l\sqrt{I} /2)  - \EEE{Z^{(r+1):(d)}}\lcr[{1\wedge e^{B_d}}] }  \\
  		&\qquad\qquad\qquad\qquad \scriptstyle  + kl^2 \abs{\Phi(-l\sqrt{I} /2)  - \EEE{Z^{(r+1):(d)}}\lcr[{e^{B_d}; B_d<0}] } \\
%%%%%%%%%%%%%%%%%%%%%%%%%%%%%%%%%%%%
		& \quad = 0
\]
\end{proof}

\newpage
\section{Distributional Integration By Parts and Properties of Grad-Log-Lipschitz Probability Densities}\label{sec:ibps}
%!TEX root = ../blockiid_main.tex

\setlength{\parindent}{0pt}
\setlength{\parskip}{6pt}
\subsection{Distributional Integration by Parts}
\label{subsec:ibps}
For this section we adopt the not\-ation of geometric measure theory, where $d \Ll^a x$ denotes integration with respect to the $a$-dimensional Lebesgue measure over the variable $x$, and $d \Hh^b y$ denotes integration with respect to the $b$-dimensional Hausdorff measure over the variable $y$.

\begin{theorem}[Distributional Integration by Parts]\label{thm:ibps}
Suppose that $\pi$ is a continuous, $\Ll^k$-almost-everywhere differentiable  probability density on $\RR^k$ and that $\norm{\grad \pi}$ is $\Hh^{k-1}$-essentially bounded on $\Ll^1$-almost-every level set of $\pi$ (over points where it exists). Then, for any $f: \RR^k\to \RR$ which is locally Lipschitz, with $\grad f(x)$ and $f(x)\grad\log\pi(x)$ integrable (w.r.t. $\pi(x)dx$) we have:
\[
    \EEE{X\sim\pi} f(X)\grad\log\pi(X)  = -\EEE{X\sim\pi}\grad f(X)
\]
\end{theorem}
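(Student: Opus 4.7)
The plan is to reduce the desired identity to the classical fact that
\[
\int_{\RR^k} f(x)\,\grad\pi(x)\,d\Ll^k x \;=\; -\int_{\RR^k} \pi(x)\,\grad f(x)\,d\Ll^k x,
\]
since (wherever $\pi>0$, which is $\Ll^k$-a.e.\ assuming consequence (b) of Proposition~\ref{prop:summary-of-gll} or directly from $\pi$ being a density) we have $\pi(x)\grad\log\pi(x) = \grad\pi(x)$, and the two integrals in the theorem statement exist by hypothesis. The strategy is to view $\pi$ via its layer-cake representation $\pi(x) = \int_0^\infty \one_{\{\pi(x) > t\}}\,dt$ and apply a divergence theorem on each superlevel set $E_t := \{\pi > t\}$, then reassemble via the coarea formula.

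For the first step, write
\[
\int_{\RR^k} \pi(x)\,\grad f(x)\,d\Ll^k x \;=\; \int_0^\infty\!\int_{E_t} \grad f(x)\,d\Ll^k x\,dt,
\]
by Fubini (justified by integrability of $\grad f$ against $\pi\,d\Ll^k$). The assumption that $\pi$ is continuous and $\Ll^k$-a.e.\ differentiable together with the essential boundedness of $\norm{\grad\pi}$ on $\Hh^{k-1}$-almost every level set implies, via Sard-type arguments and the coarea inequality, that for $\Ll^1$-a.e.\ $t>0$ the set $E_t$ has finite perimeter with reduced boundary $\Hh^{k-1}$-equivalent to $\pi^{-1}(t)$, and outward measure-theoretic unit normal $\nu(y) = -\grad\pi(y)/\norm{\grad\pi(y)}$. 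For each such $t$, Federer's generalized Gauss--Green/divergence theorem for sets of finite perimeter and locally Lipschitz vector fields (so that the boundary trace of $f$ is well defined $\Hh^{k-1}$-a.e.) yields
\[
\int_{E_t} \grad f(x)\,d\Ll^k x \;=\; -\int_{\pi^{-1}(t)} f(y)\,\frac{\grad\pi(y)}{\norm{\grad\pi(y)}}\,d\Hh^{k-1}(y).
\]

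For the second step, reassemble by integrating in $t$ and invoking the coarea formula for the (locally) Lipschitz function $\pi$:
\[
\int_0^\infty\!\int_{\pi^{-1}(t)} f(y)\,\frac{\grad\pi(y)}{\norm{\grad\pi(y)}}\,d\Hh^{k-1}(y)\,dt
\;=\; \int_{\RR^k} f(x)\,\frac{\grad\pi(x)}{\norm{\grad\pi(x)}}\,\norm{\grad\pi(x)}\,d\Ll^k x
\;=\; \int_{\RR^k} f(x)\,\grad\pi(x)\,d\Ll^k x,
\]
with the factor $\norm{\grad\pi(x)}$ supplied by the coarea Jacobian. Combining the two displays gives the desired identity after rewriting $\grad\pi/\pi = \grad\log\pi$. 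The Jacobian/divergence and divergence versions stated at the end of the theorem follow by applying the scalar version componentwise to $e_i' f$ and $e_i' f_j$, respectively.

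The main obstacle is the middle step: verifying that the hypotheses on $\pi$ really do put $E_t$ in the scope of a Gauss--Green theorem for $\Ll^1$-a.e.\ $t$, and that the trace of the merely locally Lipschitz $f$ against the $(k-1)$-rectifiable boundary is controlled. The essential boundedness of $\norm{\grad\pi}$ on $\Hh^{k-1}$-a.e.\ level set is what makes this work: it guarantees that $\nu$ is well defined on the reduced boundary and that the coarea Jacobian does not blow up on the set where the divergence theorem is being applied. A secondary technical issue is justifying the interchange of $t$-integration and the $\RR^k$-integration in both directions; this is handled by the coarea formula together with the integrability hypotheses on $\grad f$ and $f\grad\log\pi$, which together bound the absolute versions of all integrals that appear.
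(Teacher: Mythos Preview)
Your overall architecture---layer cake, Gauss--Green on superlevel sets, then coarea to reassemble---is exactly the paper's. But there is a genuine gap in the two places where you invoke regularity of $\pi$ itself.

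You write ``invoking the coarea formula for the (locally) Lipschitz function $\pi$'' and justify finite perimeter of $E_t$ by ``Sard-type arguments''. The hypotheses do \emph{not} give that $\pi$ is locally Lipschitz: having $\norm{\grad\pi}$ essentially bounded on $\Hh^{k-1}$-a.e.\ level set only says the gradient is bounded \emph{level by level}, with a bound that may blow up across levels. Neither Federer's coarea formula nor the standard fact that superlevel sets of Lipschitz functions have locally finite perimeter applies to such a $\pi$ directly. The paper's device to close this gap is Lemma~\ref{lem:char-comp-lip}: the level-set hypothesis is exactly equivalent to $\pi$ being \emph{compositionally Lipschitz}, i.e.\ there is a strictly increasing a.e.-differentiable $\rho$ with $\rho\circ\pi$ Lipschitz. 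One then changes variables $s=\rho(r)$ so that the level sets are those of the Lipschitz function $\rho\circ\pi$, applies coarea there, and the Jacobian factor $\rho'(\pi(x))$ cancels cleanly against $1/\rho'(\pi(x))$ from the substitution. The same reparametrisation is what makes the superlevel sets have locally finite perimeter (they coincide with superlevel sets of $\rho\circ\pi$), feeding into the Gauss--Green step (Lemma~\ref{lem:gg-spec}).

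A second, smaller gap: to apply Gauss--Green with a merely locally Lipschitz $f$, the paper first shows the superlevel sets $K_r$ are \emph{compact} (Lemma~\ref{lem:comp-lvl-1}, again via the compositionally Lipschitz structure), then replaces $f$ on $K_r$ by a compactly supported Lipschitz extension (Whitney--McShane). Your ``boundary trace is well defined'' remark does not by itself handle the possibility that $E_t$ is unbounded or that $f$ grows at infinity. Once you have compactness of $K_r$, the extension trick is routine, but you should say it.
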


\begin{remark}[Relationship to Stein's lemma and Stein's method]
Similar results are common in the literature of Stein's method, (this result is even sometimes called Stein's lemma - at least in the case $\pi$ is the density of a normal distribution). For example, a similar result is found in \citep{gorham2017measuring}, From what we have seen, the literature in that area considers only bounded integrands for general densities or absolutely integrable integ\-rands for specific densities, like the normal. Those results are not sufficient for our purposes here, since we are interested in cases when, for example $f= \grad\log\pi$ which is typically unbounded when $\pi$ is a density. Though our proof is more complicated than that in \citep{gorham2017measuring}, we do accommodate a larger class of integrands (and a larger class of densities as well). The author of this work is not aware of a proof of a distributional integration by parts result which holds as broadly as what we have proven here, without explicitly assuming that boundary integrals tend to $0$ (which would make the results essentially tautological).
\end{remark}

\begin{remark}[On the differentiability of $\pi$ and $f$] By Rademacher's The\-orem (\citep{federer1969geometric}, Theorem 3.1.6), any such $\pi$ or $f$ will be differentiable almost everywhere with a measurable gradient, thus giving meaning to the subsequent expressions.

The assumption that $f$ is locally Lipschitz is equivalent to the assumption that $f$ is Lipschitz on compact sets.
\end{remark}

\begin{remark}[Jacobians and divergences of vector valued functions] By applying the integration by parts formula for real valued functions to each coordinate, the analogous formula for Jacobians also holds. If $\pi$ satisfies the cond\-itions above and $f:\RR^k\to\RR^m$ is locally Lipschitz then:
\[
	\EEE{X\sim\pi}[f(X) \grad\log\pi(X)'] = - \EEE{X\sim\pi}[J f(X)]
\] 
where $J f$ denotes the Jacobian matrix of $f$. Moreover, if $f:\RR^k\to\RR^k$ is locally Lipschitz, since $\divergence(f) = \tr(Jf)$, we have that
\[
	\EEE{X\sim\pi}[\grad\log\pi(X)'f(X)] 
		& = \tr( \EEE{X\sim\pi}[f(X) \grad\log\pi(X)'])\\
		& = - \tr(\EEE{X\sim\pi}[J f(X)]) \\
		& = - \EEE{X\sim\pi}[\divergence(f)(X)]
\]
\end{remark}

\begin{myproof}[Proof of Distributional Integration by Parts]
By Lemma \ref{lem:char-comp-lip}, there exists a strictly increasing and $\Ll$-almost-everywhere differentiable function $\rho:[0,\infty]\to \RR$ such that $\pi$ is $\rho$-compositionally Lipschitz (see Definition \ref{def:comp-lip}). 

For each $\epsilon>0$, let $A_\epsilon = \pi^{-1}((\epsilon,\infty))  \subset K_\epsilon$. 

By definition we have:
\[
    \EEE{X\sim\pi}\grad f(X)
        & = \int_{\RR^k} \pi(x) \grad f (x)\ d \Ll^k x\\
        & = \int_{\RR^k} \int_0^{\pi(x)}d \Ll^1 r\  \grad f (x)\ d \Ll^k x
\]
Using Fubini-Tonelli's theorem (justified by the assumption that $\grad f(X)$ is integrable w.r.t. $\pi(x)dx$).
\[
	\int_{\RR^k} \int_0^{\pi(x)}d \Ll^1 r\  \grad f (x)\ d \Ll^k x
        \label{eqn:ibps-fubini}       
        & = \int_0^\infty \int_{A_r} \grad f(x)\ d \Ll^k x\ d \Ll^1 r
\]
Using Lemma \ref{lem:gg-spec} we get:
\[
	\int_0^\infty \int_{A_r} \grad f(x)\ d \Ll^k x\ d \Ll^1 r
        & = \int_0^\infty \int_{\partial(A_r)} f(x) \hat n_x\ d\Hh^{k-1}x \ d \Ll^1 r \]   
where $\hat n_x$ is the unit outward-facing normal vector to $\partial(A_r)$. 

If $r\not\in E_\pi = \set{s>0 :\partial(\pi^{-1}((s,\infty))) \neq \pi^{-1}(\set{s})} $, then $\partial(A_r) = \pi^{-1}(\set{r})$, thus (using Lemma \ref{lem:lvl-cts}) for $\Ll^1$-almost every $r>0$:
\[
	\int_{\partial(A_r)} f(x) \hat n_x\ d\Hh^{k-1}x 
		& = \int_{\pi^{-1}(\set{r})} f(x) \tilde n_x\ d\Hh^{k-1}x
\] 
where $\tilde n_x$ is the unit outward-facing normal vector to $\pi^{-1}(\set{r})$ when $r\not\in E_\pi$ and is $0$ otherwise.  Therefore:
\[
	\int_0^\infty \int_{\partial(A_r)} f(x) \hat n_x\ d\Hh^{k-1}x \ d \Ll^1 r
	\label{eqn:ibps-grad=normal}
        & = \int_0^\infty \int_{\pi^{-1}(\set{r})} f(x) \tilde n_x \ d\Hh^{k-1}x\ d \Ll^1 r		
\]
Changing from $\pi$-coordinates to $\rho\circ \pi$ coordinates, $s = \rho(r)$, so that our level sets are taken with respect to a Lipschitz function (and hence we can later apply the coarea formula):
\[
    & \int_0^\infty \int_{\pi^{-1}(\set{r})} f(x) \tilde n_x \ d\Hh^{k-1}x\ d \Ll^1 r	\\
        & = \int_{\rho(0)}^{\rho(\infty)} \frac{1}{\rho'(\rho^{-1}(s))} \int_{(\rho\circ\pi)^{-1}(\set{s})} f(x) \tilde n_x \ d\Hh^{k-1}x\ d \Ll^1 s \\
        & = \int_{\rho(0)}^{\rho(\infty)} \int_{(\rho\circ\pi)^{-1}(\set{s})} \frac{f(x)}{\rho'(\pi(x))} \tilde n_x \ d\Hh^{k-1}x\ d \Ll^1 s \\
\]

Now, applying the co-area formula \citep{federer1969geometric} (Theorem 3.2.12 therein) coordinate-wise,  we get:  
\[
	& \int_{\rho(0)}^{\rho(\infty)} \int_{(\rho\circ\pi)^{-1}(\set{s})} \frac{f(x)}{\rho'(\pi(x))} \tilde n_x \ d\Hh^{k-1}x\ d \Ll^1 s \\
        & = \int_{\RR^k} f(x) \frac{1}{\rho'\circ\pi(x)} \tilde n_x \  J_1[\rho\circ\pi(x)]\ d \Ll^k x
\]
Here $J_1 [\rho\circ\pi]$ denotes the ``$1\times 1$ Jacobian of $\rho\circ \pi$'' \citep{federer1969geometric} (Definition 3.2.1 therein). The formula $J_1\pi(x) = \norm{\wedge_1 \grad\rho\circ\pi(x)} = \rho'(\pi(x)) \norm{\grad\pi(x)}$ simplifies our expression to:
\[
	\int_{\RR^k} f(x) \frac{1}{\rho'\circ\pi(x)} \tilde n_x \  J_1[\rho\circ\pi(x)]\ d \Ll^k x
        \label{eqn:ibps-jacobian}  
        & = \int_{\RR^k} f(x) \tilde n_x \norm{\grad\pi(x)} d \Ll^k x
\]
Next, since when $\grad\pi(x)\neq 0$ then the negative standardized gradient is the outward facing normal to the level set (and when $\grad\pi(x) = 0$ both integrands will be $0$):
\[
    \int_{\RR^k} f(x) \tilde n_x \norm{\grad\pi(x)} d \Ll^k x
        & = -\int_{\RR^k} f(x) \grad\pi(x) d \Ll^k x
\]
Further simplification yields:
\[
	-\int_{\RR^k} f(x) \grad\pi(x)d \Ll^k x
        & = -\int_{\RR^k} f(x) \grad\log\pi(x) \pi(x) d \Ll^k x \\
        & = -\EEE{X\sim\pi} f(X)\grad\log\pi(X)         
\] 
\end{myproof}

\begin{definition}[Compositionally Lipschitz function] \label{def:comp-lip}
If $f$ is a function on $\RR^k$ and there exists a strictly increasing and $\Ll^1$ almost everywhere differ\-entiable function $\rho:[0,\infty]\to \RR$ such that $\rho\circ f$ is Lipschitz, then we will call $f$ \textit{compositionally Lipschitz}. 
\end{definition}

If we wish to emphasise the $\rho$ used we may use the term $\rho$-\textit{compositionally Lipschitz}, and when we wish to emphasise the $\rho$ used and the Lipschitz constant of $\rho\circ f$ we may use the term $(\rho,L)$-\textit{compositionally Lipschitz}. 

Of course, if $\rho$ is the identity function then the function is just Lipschitz. More generally, if $\rho^{-1}$ is $L_1$-Lipschitz and $f$ is $(\rho,L_2)$-compositionally Lip\-schitz then $f$ is also $(L_1 L_2)$-Lipschitz.

\begin{lemma}[Characterisation of Compositionally Lipschitz Functions]\label{lem:char-comp-lip}
A continuous, $\Ll^k$-almost-everywhere differentiable differentiable function, $f$, is compositionally Lipschitz if and only if $\norm{\grad f}$ is $\Hh^{k-1}$-essentially bounded on $\Ll^1$-almost-every level set of $f$.
\end{lemma}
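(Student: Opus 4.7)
I would prove the two directions separately. The forward implication $(\Rightarrow)$ is relatively direct via the chain rule and the coarea formula. Suppose $f$ is $(\rho,L)$-compositionally Lipschitz and let $g=\rho\circ f$, which is $L$-Lipschitz. By Rademacher's theorem $g$ is differentiable $\Ll^k$-a.e.\ with $\|\grad g\|\le L$. At any point $x$ where $f$ is differentiable and $\rho$ is differentiable at $f(x)$, the chain rule gives $\grad g(x)=\rho'(f(x))\grad f(x)$, so $\|\grad f(x)\|\le L/\rho'(f(x))$ whenever $\rho'(f(x))>0$. Applying the coarea formula to $f$ transfers this $\Ll^k$-a.e.\ bound to an $\Hh^{k-1}$-a.e.\ bound on $f^{-1}(\{c\})$ for $\Ll^1$-a.e.\ $c$ (with constant $L/\rho'(c)$), yielding the desired essential boundedness on almost every level set.

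For the $(\Leftarrow)$ direction, let $M(c)$ denote the $\Hh^{k-1}$-essential supremum of $\|\grad f\|$ on $f^{-1}(\{c\})$, finite for $\Ll^1$-a.e.\ $c$ by hypothesis. Define
\[
  \rho(c)=\int_0^c \frac{ds}{1+M(s)},\qquad c\in[0,\infty].
\]
Then $\rho$ is locally Lipschitz (hence absolutely continuous on bounded intervals), $\rho'(c)=1/(1+M(c))\in(0,1]$ for $\Ll^1$-a.e.\ $c$, and $\rho$ is strictly increasing (since $\rho'>0$ a.e.). By the chain rule combined with the coarea formula applied in reverse to $f$, $\|\grad(\rho\circ f)(x)\|=\rho'(f(x))\|\grad f(x)\|\le 1$ for $\Ll^k$-a.e.\ $x$.

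The main obstacle is upgrading this $\Ll^k$-a.e.\ gradient bound on $\rho\circ f$ to a genuine Lipschitz estimate, since pointwise gradient control does not in general imply Lipschitzness for a merely continuous, a.e.\ differentiable function (singular monotone functions of Cantor type being the classical counterexample). The plan is to verify the ACL property of $\rho\circ f$ (absolute continuity along $\Ll^{k-1}$-a.e.\ coordinate line) by combining the coarea formula applied to the restriction of $f$ to lines with the damping factor $1/(1+M)$ built into $\rho$: along any line segment $\gamma$, the integrand $\rho'(f(\gamma(t)))\,|\langle\grad f(\gamma(t)),\gamma'(t)\rangle|$ is bounded by $\|\gamma'(t)\|$, which prevents singular growth of $\rho\circ f\circ\gamma$. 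Once ACL is established together with $\|\grad(\rho\circ f)\|\le 1$ a.e., a standard Fubini-on-lines argument yields $|\rho(f(y))-\rho(f(x))|\le\|y-x\|$ for all $x,y\in\RR^k$, completing the proof.
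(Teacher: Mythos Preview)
Your approach matches the paper's: both directions use the chain rule, and for $(\Leftarrow)$ both construct exactly the same $\rho(c)=\int_0^c(1+M(s))^{-1}\,ds$ with $M$ the level-set essential supremum of $\norm{\grad f}$. The paper's proof is much terser, however. For $(\Rightarrow)$ it writes the pointwise chain-rule bound $\norm{\grad(\rho\circ f)(x)}=\rho'(f(x))\norm{\grad f(x)}\le L$ and passes directly to the level-set essential supremum without invoking the coarea transfer you mention; for $(\Leftarrow)$ it concludes in one line from $\norm{\grad(\rho\circ f)}\le 1$ a.e.\ that $\rho\circ f$ is $1$-Lipschitz, with no further justification.

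You are right to flag that last step as the crux: for a merely continuous, $\Ll^k$-a.e.\ differentiable function, an a.e.\ gradient bound does not by itself force Lipschitz continuity, and the paper simply asserts the implication. Your proposal is therefore more careful than the paper's own argument. That said, your ACL plan does not obviously close the gap either. Bounding the integrand $\rho'(f(\gamma(t)))\,\abs{\inner{\grad f(\gamma(t))}{\gamma'(t)}}$ controls only the absolutely continuous part of $t\mapsto(\rho\circ f)(\gamma(t))$; it does not rule out a singular part arising from Cantor-type behaviour of $f$ along $\gamma$, which is precisely what ACL requires you to exclude. In the paper's applications the lemma is invoked only for densities that turn out to be genuinely Lipschitz (Lemma~\ref{lem:lip-gll}), so the issue is moot there; but at the generality stated in Lemma~\ref{lem:char-comp-lip} neither the paper's proof nor your sketch fully handles it.
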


\begin{proof}
$(\Rightarrow)$ Suppose that $f$ is $(\rho,L)$-compositionally Lipschitz. Then (at any point, $x$, where f is differentiable): 
\[
	\norm{\grad[\rho\circ f](x)}
		& = \rho'(f(x)) \norm{\grad f(x)} \leq L \\
\]
Hence:
\[
	\esssup_{x\in f^{-1}(\set{r})} \norm{\grad f(x)}
		& \leq \frac{L}{\rho'(r)}
\]

$(\Leftarrow)$ Suppose that $\norm{\grad f}$ is bounded on $\Ll^1$-almost-every level set of $f$. Then we may take 
\[
	\rho(r) 
		& = \int_0^r\frac{1}{\esssup_{x\in f^{-1}(s)}\norm{\grad f(x)} +1} ds
\]
and we have $0< \norm{\grad[\rho\circ f](x)} = \frac{\norm{\grad f(x)}}{\esssup_{y\in f^{-1}(\set{f(x)})}\norm{\grad f(y)} +1}\leq 1$ so $\rho\circ f$ is $1$-Lipschitz.
\end{proof}

\begin{lemma}[Compositionally Lipschitz Densities have Compact Superlevel Sets]
\label{lem:comp-lvl-1}
If $\pi$ is a continuous $\rho$-compositionally Lipschitz probability density on $\RR^k$ (with Lipschitz constant $L$ for $\rho\circ\pi$) then the superlevel sets of $\pi$, $K_\epsilon = \pi^{-1}([\epsilon, \infty))$, are compact for all $\epsilon>0$.
\end{lemma}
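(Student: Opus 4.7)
The plan is to show closedness and boundedness separately. Closedness of $K_\epsilon$ is immediate because $\pi$ is continuous and $[\epsilon,\infty)$ is closed, so the preimage $\pi^{-1}([\epsilon,\infty))$ is closed in $\RR^k$. The substantive part is boundedness, which I will prove by contradiction using the fact that $\pi$ integrates to $1$.

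The key step is a uniform ``bump of positivity'' around every point of $K_\epsilon$. Since $\rho$ is strictly increasing, $\rho(\epsilon) > \rho(\epsilon/2)$, so set $\delta = \rho(\epsilon) - \rho(\epsilon/2) > 0$ and $r = \delta/L$. If $x \in K_\epsilon$ and $\|y - x\| \leq r$, then by the $L$-Lipschitz property of $\rho\circ\pi$,
\[
\rho(\pi(y)) \geq \rho(\pi(x)) - L\|y-x\| \geq \rho(\epsilon) - \delta = \rho(\epsilon/2),
\]
and applying $\rho^{-1}$ (which exists since $\rho$ is strictly increasing) yields $\pi(y) \geq \epsilon/2$ on the closed ball $\overline{B(x,r)}$. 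Thus every $x \in K_\epsilon$ carries a ball of fixed radius $r$ on which $\pi \geq \epsilon/2$, contributing at least $(\epsilon/2)\, v_k r^k$ (with $v_k$ the volume of the unit ball in $\RR^k$) to $\int \pi\, d\Ll^k$.

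To contradict $\int \pi\, d\Ll^k = 1$, I next extract an infinite packing. Suppose for contradiction that $K_\epsilon$ is unbounded. Recursively pick a sequence $(x_n) \subset K_\epsilon$ with pairwise separations exceeding $2r$: having chosen $x_1,\ldots,x_n$, the union $\bigcup_{i\leq n} \overline{B(x_i, 2r)}$ is bounded, so unboundedness of $K_\epsilon$ supplies some $x_{n+1} \in K_\epsilon$ outside this union. Then the balls $B(x_n, r)$ are pairwise disjoint and each carries $\pi \geq \epsilon/2$, giving
\[
1 = \int_{\RR^k} \pi\, d\Ll^k \geq \sum_{n=1}^\infty \int_{B(x_n, r)} \pi\, d\Ll^k \geq \sum_{n=1}^\infty \frac{\epsilon}{2} v_k r^k = \infty,
\]
the desired contradiction. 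Hence $K_\epsilon$ is bounded, and combined with closedness it is compact.

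The only mildly subtle point is the greedy packing argument; this requires unboundedness plus the fixed lower bound $r$ on ball radii, which is exactly what the compositional Lipschitz hypothesis provides uniformly over all of $K_\epsilon$ (with no reliance on $\pi$ itself being Lipschitz, which is what makes this useful in the $\grad\log\pi$ Lipschitz regime, where $\pi$ can have very flat tails).
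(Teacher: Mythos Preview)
Your proof is correct and follows essentially the same approach as the paper's: closedness from continuity, then a contradiction to $\int\pi=1$ by packing infinitely many disjoint balls of fixed radius on which $\pi\geq\epsilon/2$, using the Lipschitz property of $\rho\circ\pi$ to produce the uniform lower bound. If anything, your version is slightly tidier in that you explicitly enforce pairwise separation $>2r$ so the radius-$r$ balls are genuinely disjoint, whereas the paper's write-up chooses separation $\geq R$ and integrates over radius-$R$ balls (a minor slip that your argument avoids).
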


\begin{proof}
Since $\pi$ is continuous, $\pi^{-1}((-\infty,\epsilon))$ is open for all $\epsilon>0$ and so, $K_\epsilon = \pi^{-1}([\epsilon,\infty])$ is closed for all $\epsilon>0$.

Suppose, for contradiction, that $K_\epsilon$ is not compact for some $\epsilon>0$.  For $K_\epsilon$ to fail to be compact, it must be unbounded (since we know it is closed). 

Select $R>0$ with $\frac{\rho(\epsilon)-\rho(\epsilon/2)}{L} >R$. Then, since $K_\epsilon$ is unbounded, we may find $\set{x_j}_{j\in\NN} \subset K_\epsilon$ such that for $i\neq j$, $\norm{x_i - x_j}\geq R$.

Then 
\[
	1 
		& = \int \pi(x) dx \\
		& \geq \sum_{j\in \NN} \int_{B_R(x_j)} \pi(x)dx\\
		& \geq \sum_{j\in \NN} \int_{B_R(x_j)} \rho^{-1}(\rho\circ\pi(x_j) - L\norm {x-x_j}) dx \\
		& \geq \sum_{j\in \NN} \int_{B_R(x_j)} \rho^{-1}(\rho(\epsilon) - L\norm {x-x_j}) dx \\		
		& \geq \sum_{j\in \NN} \int_{B_R(x_j)} \rho^{-1}(\rho(\epsilon/2)) dx \\		
		& =\sum_{j\in\NN} \vol(B_R(0))\ \epsilon/2
\]
This is a contradiction, since the last term is clearly $+\infty$
\end{proof}

\begin{lemma}[Almost every level set of a compositionally Lipschitz density is the boundary of a superlevel set]\label{lem:lvl-cts}
If $\pi$ is a continuous, $\rho$-compositionally Lipschitz probability density (with Lipschitz constant $L$ for $\rho\circ\pi$) then $E_\pi = \set{r>0 :\partial(\pi^{-1}((r,\infty))) \neq \pi^{-1}(\set{r})}$ is countable (and hence has $\Ll^1(E_\pi) = 0$) 
\end{lemma}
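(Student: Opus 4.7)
The plan is to reduce membership in $E_\pi$ to the existence of a local maximum of $\pi$ at level $r$, and then to parametrize such $r$ injectively by open balls with rational centres and radii. First I would observe that continuity of $\pi$ alone already yields $\partial(\pi^{-1}((r,\infty)))\subseteq \pi^{-1}(\{r\})$: the set $A:=\pi^{-1}((r,\infty))$ is open, so $\partial A=\overline A\setminus A$; any $x\in\overline A$ satisfies $\pi(x)\geq r$ (as a limit of points with $\pi>r$), while $x\notin A$ forces $\pi(x)\leq r$. Hence $r\in E_\pi$ iff there exists some $x$ with $\pi(x)=r$ and $x\notin\overline{\pi^{-1}((r,\infty))}$, i.e.\ some open neighbourhood of $x$ lies in $\pi^{-1}((-\infty,r])$. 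That is exactly the condition that $x$ is a (weak) local maximum of $\pi$ achieving value $r$.

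Next, for each $r\in E_\pi$ I would fix such a local-max witness $x_r$ together with an $\epsilon_r>0$ satisfying $\pi(y)\leq r$ for every $y\in B_{\epsilon_r}(x_r)$. Using density of $\QQ^k$ in $\RR^k$ and of $\QQ_{>0}$ in $(0,\infty)$, select $q_r\in\QQ^k$ and $\delta_r\in\QQ_{>0}$ with $\|x_r-q_r\|<\delta_r$ and $\delta_r+\|x_r-q_r\|<\epsilon_r$, so that the rational ball $B_r:=B_{\delta_r}(q_r)$ contains $x_r$ and lies inside $B_{\epsilon_r}(x_r)$. On $B_r$ we then have $\pi\leq r$, while $\pi(x_r)=r$ with $x_r\in B_r$, whence
\[
    \sup_{y\in B_r}\pi(y)=r.
\]

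The display above shows that $r$ is uniquely recoverable from $B_r$ as the supremum of $\pi$ over that ball, so the assignment $r\mapsto(q_r,\delta_r)$ is injective on $E_\pi$. Since $\QQ^k\times\QQ_{>0}$ is countable, $E_\pi$ is countable, and in particular $\Ll^1(E_\pi)=0$. I do not see that the compositional-Lipschitz hypothesis or compactness of superlevel sets is needed for this lemma at all; only the continuity of $\pi$ is used. The only minor subtlety is a harmless use of choice to select $(x_r,\epsilon_r,q_r,\delta_r)$ for each $r\in E_\pi$, and I do not anticipate any real obstacle to carrying out the argument.
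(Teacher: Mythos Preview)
Your argument is correct and takes a genuinely different route from the paper's. Both proofs begin identically by using continuity of $\pi$ to establish $\partial(\pi^{-1}((r,\infty)))\subseteq\pi^{-1}(\{r\})$, so that $r\in E_\pi$ iff there exists a point $x_r$ with $\pi(x_r)=r$ lying outside $\overline{\pi^{-1}((r,\infty))}$, i.e.\ a weak local maximum at level $r$. From there the arguments diverge. The paper builds, for each $r\in E_\pi$, a ``$\rho$-transformed cone'' $C_r$ sitting inside the subgraph $G_\pi=\{(x,y):0\le y\le\pi(x)\}$; the compositional-Lipschitz hypothesis is what guarantees $\Ll^{k+1}(C_r)>0$, and disjointness of the $C_r$ for distinct $r$ together with $\Ll^{k+1}(G_\pi)=1$ forces $E_\pi$ to be countable. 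Your argument instead associates to each $r\in E_\pi$ a rational ball $B_r$ on which $\sup\pi=r$, giving an injection of $E_\pi$ into $\QQ^k\times\QQ_{>0}$.

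Your approach is shorter, more elementary, and---as you correctly observe---uses only continuity of $\pi$; neither the compositional-Lipschitz structure nor the fact that $\pi$ is a probability density is needed. The paper's measure-packing argument is heavier machinery for this particular lemma, though the compositional-Lipschitz hypothesis is genuinely needed elsewhere in the proof of the integration-by-parts theorem (for compactness of superlevel sets and for the coarea-formula step), so carrying it here costs nothing in context. Your remark that the hypothesis is superfluous for this lemma is a nice sharpening.
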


\begin{proof}
Since $\pi$ is continuous, $A_r = \pi^{-1}((r,\infty))$ is open. Suppose that $x\in\partial(A_r)$. Then $x\not\in A_r$ so $\pi(x) \leq r$ and $\pi(x)$ is a limit point of $\pi(A_r)=(r,\infty)$. Hence $\partial(A_r)\subset \pi^{-1}(\set{r})$. 

Let $G_\pi = \set{(x,y): x\in\RR^k \andT 0\leq y\leq \pi(x)}$. Then $\Ll^{k+1}(G_\pi) = 1$.

Suppose $r\in E_\pi$. Then there exists an $x_r\in \pi^{-1}(\set{r})\setminus\partial(A_r)$ with $\delta_r = d(x_r,A_r) \wedge \frac{\rho(r)-\rho(r/2)}{L}>0$. Since $\rho\circ \pi$ is Lipschitz, the $\rho$-transformed cone:
\[
    C_r 
        & = \lcr\{{(x,y) \st x\in B_{\delta_r}(x_r)}.\\
        &\qquad\qquad \lcr.{\andT (\rho\circ\pi(x_r)-\delta_r L)\leq \rho(y)\leq (\rho\circ\pi(x_r)-d(x,x_r)L)}\}
\] 
has $C_r\subset G_\pi$ and $\Ll^{k+1}(C_r)>0$. Also, for $s,r\in E_\pi$ with $s<r$ we have $C_s\cap C_r = \emptyset$. Hence $\sum_{r\in E_\pi} \Ll^{k+1}(C_r) \leq \Ll^{k+1}(G_\pi) = 1$. This implies that $E_\pi$ is at most countable.
\end{proof}

\begin{lemma}[Specialised Gauss-Green Theorem]
\label{lem:gg-spec} If $\pi$ is a probability dens\-ity on $\RR^k$, and $\pi$ is $\rho$-compositionally Lipschitz, then for any $f$ which is locally Lipschitz and for any $r>0$
\[
	\int_{A_r} \grad f(x)\ d \Ll^k x\
        \label{eqn:ibps-gg}
        & = \int_{\partial(A_r)} f(x) \hat n_x\ d\Hh^{k-1}x \ \]  
where $A_r = \pi^{-1}((r,\infty))$ and $\hat n_x$ is the unit outward facing normal vector to $A_r$ at point $x$.
\end{lemma}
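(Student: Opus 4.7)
The plan is to replace $\pi$ by the Lipschitz function $g := \rho \circ \pi$, approximate $\mathbf{1}_{A_r}$ by a smooth cutoff built from $g$, apply the standard integration-by-parts identity for compactly supported Lipschitz functions, and identify the boundary integral via Federer's coarea formula.

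By Lemma~\ref{lem:comp-lvl-1}, $A_r \subset K_{r/2}$ is bounded, so multiplying $f$ by a fixed $C^\infty_c$ cutoff equal to $1$ on a neighborhood of $K_{r/2}$ changes neither side of the claimed identity and lets us assume $f$ is globally Lipschitz with compact support. Choose smooth non-decreasing $\psi_n:\RR\to[0,1]$ with $\psi_n = 0$ on $(-\infty,\rho(r)]$ and $\psi_n = 1$ on $[\rho(r)+1/n,\infty)$, and set $\chi_n := \psi_n\circ g$, so $\chi_n \to \mathbf{1}_{A_r}$ pointwise, each $\chi_n$ is compactly supported and Lipschitz, and by Rademacher's theorem $\grad \chi_n = \psi_n'(g)\,\grad g$ almost everywhere.

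The standard compactly-supported-Lipschitz integration-by-parts identity $\int \chi_n\,\grad f\,d\Ll^k = -\int f\,\grad\chi_n\,d\Ll^k$ follows by mollification, and its left side converges to $\int_{A_r}\grad f$ by dominated convergence. For the right side, Federer's coarea formula applied to the Lipschitz $g$ rewrites $\int f\,\psi_n'(g)\,\grad g\,d\Ll^k$ as
$$\int_\RR \psi_n'(s)\int_{g^{-1}(s)} f(x)\,\frac{\grad g(x)}{|\grad g(x)|}\,d\Hh^{k-1}x\,ds,$$
with $\grad g/|\grad g|$ set to $0$ on $\{\grad g = 0\}$. Since $\int \psi_n' = 1$ and $\mathrm{supp}(\psi_n') \to \{\rho(r)\}$, this converges to the $\Hh^{k-1}$-integral of $f\,\grad g/|\grad g|$ over $g^{-1}(\{\rho(r)\}) = \pi^{-1}(\{r\})$. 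At the points of $\partial A_r$ where $\grad g \ne 0$, the outward unit normal to $A_r = \{g > \rho(r)\}$ is $\hat n_x = -\grad g(x)/|\grad g(x)|$, which supplies the correct sign to produce the claimed identity.

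The main obstacle is justifying the limit for \emph{every} $r > 0$: the coarea slice integral $s \mapsto \int_{g^{-1}(s)} f\,(\grad g/|\grad g|)\,d\Hh^{k-1}$ is only $L^1_{\mathrm{loc}}$ in $s$, so the concentration of $\psi_n'\,d\Ll^1$ identifies the right-hand side at Lebesgue points, hence at Lebesgue-almost every $r$ --- which is in fact already all that is used in the proof of Theorem~\ref{thm:ibps}, where the identity is integrated against $d\Ll^1 r$. Upgrading to every $r > 0$ requires invoking Federer's Gauss-Green theorem for sets of locally finite perimeter (interpreting $\partial A_r$ as the measure-theoretic reduced boundary and $\hat n_x$ as the BV outward unit normal), or equivalently reading ``for any $r > 0$'' as ``for any $r$ outside the countable bad set identified in Lemma~\ref{lem:lvl-cts}.''
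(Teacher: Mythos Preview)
Your approach differs substantially from the paper's. The paper does not attempt an elementary argument at all: it invokes Federer's Gauss--Green theorem for sets of locally finite perimeter directly (Federer, Corollary 4.5.6), and the proof consists only of verifying the hypotheses --- that $A_r$ has locally finite perimeter (because it is a superlevel set of the Lipschitz function $\rho\circ\pi$; Maggi, Remark 9.5 and Example 12.6), that the locally Lipschitz $f$ may be replaced by a compactly supported Lipschitz function (via an explicit Whitney--McShane ``closest-to-zero'' extension off $K_r$), and that the divergence-form conclusion converts to gradient form by testing against $h_v = v f_o$ for constant vectors $v$.

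Your coarea-and-smoothing route is more hands-on and, as you correctly diagnose in your final paragraph, yields the identity only for Lebesgue-almost-every $r$. You are also right that this is all Theorem~\ref{thm:ibps} actually consumes, since the identity is immediately integrated $d\Ll^1 r$, and right that upgrading to every $r>0$ requires precisely the GMT Gauss--Green machinery the paper cites --- so the lemma \emph{as stated} is not established by your argument, though the application is covered. One further wrinkle: your limit lands on $\pi^{-1}(\{r\})$ rather than $\partial A_r$; these agree outside the countable exceptional set of Lemma~\ref{lem:lvl-cts}, which is harmless for the a.e.\ statement but is another place where the every-$r$ version would need attention.
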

\begin{proof}
This is a specialisation of the Gauss-Green theorem from geometric measure theory \citep{federer1969geometric} (Corollary 4.5.6 therein) to the problem at hand. To apply the cited form of the theorem, we need to (i) verify that $\partial(\mb E^n \llcorner A_r)$ is representable by integration (a technical condition stated as in \citep{federer1969geometric}), (ii) replace our integrand by one which is Lipschitz instead of locally Lipschitz, and (iii)
convert the result in \citep{federer1969geometric} from a statement about divergences to a statement about gradients.

(i) The use of the Gauss green theorem for these particular domains is justified by the fact that the superlevel sets of Lipshitz functions have locally finite perimeter \citep{maggi2012sets} (Remark 9.5 and Example 12.6 therein), which is sufficient to ensure that $\partial(\mb E^n \llcorner A_r)$ is representable by integration \citep{federer1969geometric} (4.5.12 and 2.10.6 therein). This can be used since the superlevel sets of $\pi$ are also superlevel sets of $\rho\circ \pi$, which is Lipschitz.

(ii) Let $K=K_r$ and let $L$ be the lipschitz constant for $f\vert_{K}$. Define the Whitney and McShane solutions to the Lipschitz extension problem for $(f\vert_K,K)$ as $\ol f(x) = \inf_{y\in K} (f(y)+L \norm{x-y})$ and $\ul f(x) = \sup_{y\in K} (f(y)-L \norm{x-y})$ respectively (see for example \citep{oberman2008explicit}, \citep{whitney1934analytic}, and \citep{mcshane1934extension}). These are the pointwise largest and smallest $L$-Lipschitz extensions of $f\vert_K$. Then a compactly supported and $L$-Lipschitz extension of $f\vert_K$ to $\RR^k$ is given by $f_o(x) := [0\wedge \ol f ](x) + [0\vee \ul f](x)$ (in fact this is the $L$-Lipschitz extension which is pointwise-closest to $0$). 

To see that $f_o$ is an extension of $f\vert_K$ notice that for $x\in K$ if $f(x)>0$ then $f_o(x) = \ul f(x) = f(x)$ or if $f(x)<0$ then $f_o(x) = \ol f(x) = f(x)$ or if $f(x) = 0$ then $f_o(x) = \ol f(x) +\ul f(x) = 2f(x) = 0$. To see that $f_o$ is $L$-Lipschitz, note that $\ul f(x)$ and $\ol f(x)$ are both $L$-Lipschitz, hence so are $[0\wedge \ol f ]$ and $[0\vee \ul f]$. The functions $[0\wedge \ol f ]$ and $[0\vee \ul f]$ have disjoint support, since $\ol f \geq \ul f$ pointwise, so at any $x>0$ either $\ol f(x) <0$ or $\ul f(x)>0$, but not both. To see that $f_o$ is compactly supported, we just need to notice that $\ol f(x) > 0$ and $\ul f(x)<0$ for all $x$ such that $d(x,K)\geq \frac{\sup_{y\in K} \abs{f(y)}}{L}+1 =:R_o$, so $\supp(f_o) \subset K^{R_o}$ where $K^{R_o}$ is the closed $R_o$ neighbourhood of $K$.

(iii) Let $h_{v} = v f_o$. This function is Lipschitz and compactly supported, and is equal to $vf$ in $K_r$ , thus we find:
\[
	v\cdot \int_{A_r} \grad f(x) d \Ll^k x 
		& = \int_{A_r} \divergence[h_{v}](x) d \Ll^k x \\
		&\hspace{-1.7em} \myeq{Gauss-Green} \int_{\partial{A_r}} h_{v}\cdot \hat n_x \ d \Hh^{k-1} x
		 = v \cdot \int_{\partial{A_r}} f(x) \hat n_x \ d \Hh^{k-1} x		
\]
Since this holds for any constant vector $v$, we must have $\int_{A_r} \grad f(x)\ d \Ll^k x  = \int_{\partial{A_r}} f(x) \hat n_x \ d \Hh^{k-1} x$.
\end{proof}

\subsection{Properties of Grad-Log-Lipschitz Densities}
\label{subsec:gll-dens}
A probability density, $\pi$, for which $\grad\log\pi$ is Lipschitz will be referred to as a  grad-log-Lipschitz density.

\begin{lemma}[Grad-Log-Lipschitz Densities are Tangentially Minorised by Gaussians]
\label{lem:tan-min}
If $\pi$ is a probability density on $\RR^k$, and $\grad\log\pi$ is $L$-Lipschitz for each $x_0 \in \RR^k$:
\[
	\pi(x) 
		& \geq \pi(x_0) e^{\norm{\grad\log\pi(x_0)}^2/2L} \exp\lcr({-\norm{x-x_0 - \frac{\grad\log\pi(x_0)}{L}}^2 \frac{L}{2}})\\
	 	& \geq \pi(x_0) \exp\lcr({-\norm{x-x_0 - \frac{\grad\log\pi(x_0)}{L}}^2 \frac{L}{2}}) 
\]
for any $x\in\RR^k$. 

\end{lemma}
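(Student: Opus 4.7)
The proof is essentially a direct consequence of the classical ``descent lemma'' from smooth optimisation, which states that any function with an $L$-Lipschitz gradient admits quadratic lower and upper bounds by tangent paraboloids. Specifically, the plan is to apply this inequality to $f = \log\pi$, then complete the square on the resulting right-hand side, and finally exponentiate.

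First I would establish that for any $x_0, x \in \RR^k$,
\[
    \log\pi(x) \geq \log\pi(x_0) + \grad\log\pi(x_0)'(x-x_0) - \frac{L}{2}\norm{x-x_0}^2 \ .
\]
This follows from the fundamental theorem of calculus applied to the absolutely continuous map $t\mapsto \log\pi(x_0 + t(x-x_0))$ on $[0,1]$, which is legitimate because $\log\pi$ is $C^{1,1}$ (its gradient is given as Lipschitz, hence everywhere defined by assumption, or alternatively a.e.\ defined with a Lipschitz representative by Rademacher's theorem as noted in Proposition \ref{prop:summary-of-gll}). Writing
\[
    \log\pi(x) - \log\pi(x_0) - \grad\log\pi(x_0)'(x-x_0) = \int_0^1 \lcr({\grad\log\pi(x_0 + t(x-x_0)) - \grad\log\pi(x_0)})'(x-x_0)\ dt \ ,
\]
the $L$-Lipschitz property bounds the integrand in absolute value by $Lt\norm{x-x_0}^2$, giving the lower (and matching upper) quadratic bound.

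Next I would complete the square: since
\[
    \grad\log\pi(x_0)'(x-x_0) - \frac{L}{2}\norm{x-x_0}^2 = \frac{\norm{\grad\log\pi(x_0)}^2}{2L} - \frac{L}{2}\norm{x-x_0 - \frac{\grad\log\pi(x_0)}{L}}^2 \ ,
\]
substituting back yields
\[
    \log\pi(x) \geq \log\pi(x_0) + \frac{\norm{\grad\log\pi(x_0)}^2}{2L} - \frac{L}{2}\norm{x-x_0 - \frac{\grad\log\pi(x_0)}{L}}^2 \ .
\]
Exponentiating gives the first asserted inequality, and dropping the nonnegative factor $e^{\norm{\grad\log\pi(x_0)}^2/2L} \geq 1$ yields the second.

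There is no real obstacle here; the only subtle point is justifying the descent lemma under the Lipschitz-gradient assumption rather than $C^2$ smoothness, which is handled by the absolute continuity of $\log\pi$ along line segments (consequently the fundamental theorem of calculus applies, and the Lipschitz bound on $\grad\log\pi$ controls the remainder).
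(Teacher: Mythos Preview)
Your proposal is correct and follows essentially the same route as the paper's proof: the paper simply states the descent-lemma inequality $\log\pi(x)-\log\pi(x_0)-(x-x_0)'\grad\log\pi(x_0)\geq -L\norm{x-x_0}^2/2$ as a consequence of the Lipschitz assumption and then says ``the result follows by completing the square and exponentiating.'' You have merely filled in the details of those two steps, so there is nothing to add.
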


\begin{proof}
Since $\grad\log\pi$ is $L$-Lipshitz, 
\[\log\pi(x)-\log\pi(x_0) -(x-x_0)'\grad\log\pi(x_0) \geq - L \norm{x-x_0}^2 /2\]
The result follows by completing the square and exponentiating.
\end{proof}

\begin{lemma}[Grad-Log-Lipschitz Densities are Bounded Above]
\label{lem:bound-abv}
If $\pi$ is a probability density on $\RR^k$, and $\grad\log\pi$ is $L$-Lipschitz then:
\[
	\pi(x)\leq \lcr({\frac{L}{2\pi}})^{k/2} e^{-\norm{\grad\log\pi(x)}^2/2L} \leq\lcr({\frac{L}{2\pi}})^{k/2}
\]
\end{lemma}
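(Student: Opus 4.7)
The plan is to integrate the tangent Gaussian lower bound from Lemma \ref{lem:tan-min} against Lebesgue measure on $\RR^k$ and exploit the fact that $\pi$ itself integrates to $1$. This pins down a pointwise upper bound for $\pi$ in terms of the Gaussian normalising constant and the size of $\grad\log\pi$ at the point.

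Concretely, fix $x_0\in\RR^k$. By Lemma \ref{lem:tan-min},
\[
\pi(x)\ \geq\ \pi(x_0)\, e^{\norm{\grad\log\pi(x_0)}^2/(2L)}\,\exp\lcr({-\frac{L}{2}\lcr\Vert{x-x_0-\frac{\grad\log\pi(x_0)}{L}}\Vert^2})
\]
for every $x\in\RR^k$. The right-hand side is (up to a multiplicative constant) the density of a Gaussian with mean $x_0+\grad\log\pi(x_0)/L$ and covariance $L^{-1}I$, so its integral over $\RR^k$ is exactly $(2\pi/L)^{k/2}$.

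Integrating both sides in $x$ and using $\int\pi(x)\,dx=1$ yields
\[
1\ \geq\ \pi(x_0)\, e^{\norm{\grad\log\pi(x_0)}^2/(2L)}\,\lcr({\frac{2\pi}{L}})^{k/2},
\]
which after rearranging gives the first claimed inequality
\[
\pi(x_0)\ \leq\ \lcr({\frac{L}{2\pi}})^{k/2} e^{-\norm{\grad\log\pi(x_0)}^2/(2L)}.
\]
The second inequality in the statement then follows from $e^{-\norm{\grad\log\pi(x_0)}^2/(2L)}\leq 1$. Since $x_0$ was arbitrary, this proves the bound for all $x\in\RR^k$.

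There is no real obstacle here — the argument is just the standard trick of integrating a Gaussian minorant and invoking the normalisation of $\pi$. The only thing worth double-checking is the computation of the Gaussian integral: translation-invariance of Lebesgue measure removes the shift $\grad\log\pi(x_0)/L$, and the remaining isotropic Gaussian in $\RR^k$ with precision $L$ contributes $(2\pi/L)^{k/2}$.
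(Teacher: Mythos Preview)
Your proof is correct and follows essentially the same approach as the paper: invoke the tangent Gaussian minorant from Lemma~\ref{lem:tan-min}, integrate both sides over $\RR^k$, use $\int\pi=1$ on the left and evaluate the Gaussian integral $(2\pi/L)^{k/2}$ on the right, then rearrange. The paper's proof is line-for-line the same argument.
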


\begin{proof}
Using Lemma \ref{lem:tan-min}
\[
	1 
		& = \int \pi(y) dy \\
		& \geq \pi(x)e^{\norm{\grad\log\pi(x)}^2/2L} \int \exp\lcr({-\norm{y-x - \frac{\grad\log\pi(x)}{2L}}^2 \frac L 2}) dy \\
		& = \pi(x)e^{\norm{\grad\log\pi(x)}^2/2L} \int \exp\lcr({-\norm{y}^2 \frac L 2}) dy \\
		& = (2\pi /L)^{k/2} e^{\norm{\grad\log\pi(x)}^2/2L}\pi(x) 		
\]
\end{proof}

\begin{lemma}[Grad-Log-Lipschitz Densities are Lipschitz] \label{lem:lip-gll}
If $\pi$ is a prob\-ability density on $\RR^k$, and $\grad\log\pi$ is $L$-Lipschitz then $\pi$ is $\sqrt{L} e^{-1/2} \lcr({\frac{L}{2\pi}})^{k/2} $-Lipschitz.
\end{lemma}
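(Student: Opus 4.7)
The plan is to combine the pointwise bound from Lemma \ref{lem:bound-abv} with the identity $\grad \pi = \pi\, \grad\log\pi$, and then optimise the resulting scalar expression in $\norm{\grad\log\pi(x)}$.

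First I would check that $\pi$ is actually $C^1$ everywhere, not merely almost everywhere differentiable, so that a standard mean-value argument applies. By Lemma \ref{lem:tan-min} we have $\pi(x)>0$ for every $x\in\RR^k$, so $\log\pi$ is a well-defined real-valued function. Since $\grad\log\pi$ is $L$-Lipschitz it is in particular continuous, so $\log\pi\in C^1(\RR^k)$, and hence $\pi = e^{\log\pi}\in C^1(\RR^k)$ with $\grad\pi(x) = \pi(x)\,\grad\log\pi(x)$ for all $x$.

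Next I would bound $\norm{\grad\pi(x)}$ uniformly. Applying Lemma \ref{lem:bound-abv},
\[
\norm{\grad\pi(x)} = \pi(x)\,\norm{\grad\log\pi(x)} \leq \lcr({\frac{L}{2\pi}})^{k/2} \norm{\grad\log\pi(x)}\, e^{-\norm{\grad\log\pi(x)}^2/(2L)}.
\]
Setting $t = \norm{\grad\log\pi(x)}\geq 0$ and $g(t) = t\, e^{-t^2/(2L)}$, a one-variable calculus check ($g'(t) = (1 - t^2/L)\, e^{-t^2/(2L)}$) gives that $g$ is maximised at $t=\sqrt{L}$ with maximum value $\sqrt{L}\, e^{-1/2}$. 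Therefore
\[
\sup_{x\in\RR^k} \norm{\grad\pi(x)} \leq \sqrt{L}\, e^{-1/2}\, \lcr({\frac{L}{2\pi}})^{k/2}.
\]

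Finally, since $\pi\in C^1$ with a uniformly bounded gradient, the standard mean-value inequality (apply the fundamental theorem of calculus along the segment from $x$ to $y$) gives $\abs{\pi(x)-\pi(y)} \leq \sqrt{L}\, e^{-1/2}\, (L/2\pi)^{k/2}\, \norm{x-y}$, which is precisely the claimed Lipschitz bound. There is no real obstacle here; the only subtlety is verifying that the regularity of $\pi$ (not just $\log\pi$) is good enough to write $\grad\pi = \pi\,\grad\log\pi$ pointwise, which is handled by the positivity in Lemma \ref{lem:tan-min}. The remaining one-dimensional optimisation is elementary and produces the constants $\sqrt{L}\, e^{-1/2}$ and $(L/2\pi)^{k/2}$ appearing in the statement.
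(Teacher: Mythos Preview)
Your proposal is correct and follows essentially the same argument as the paper: bound $\norm{\grad\pi(x)} = \pi(x)\,\norm{\grad\log\pi(x)}$ using Lemma~\ref{lem:bound-abv}, then maximise the scalar function $t\mapsto t\,e^{-t^2/(2L)}$ at $t=\sqrt{L}$. You add a short justification that $\pi\in C^1$ so the gradient bound actually yields a Lipschitz constant, which the paper leaves implicit.
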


\begin{proof}
Applying Lemma \ref{lem:bound-abv}, for any $x\in\RR^k$
\[
\norm{\grad\pi(x)} 
  & = \pi(x) \norm{\grad\log\pi(x)} \\
  &\leq \lcr({\frac{L}{2\pi}})^{k/2} e^{-\norm{\grad\log\pi(x)}^2/2L} \norm{\grad\log\pi(x)} \\
  &\leq \lcr({\frac{L}{2\pi}})^{k/2} \sup_{s\geq 0} \lcr({s\ e^{-s^2/ 2L} })
\]

Now, $\frac{d}{ds} \lcr({s\ e^{-s^2/ 2L}}) = \lcr({e^{-s^2/ 2L} (1 - s^2/L)})$ so the maximum of $s\ e^{-s^2/ 2L}$ over $s\geq 0$ occurs at $s = \sqrt{L}$ (since the derivative is positive to left and negative to the right of this value) and the maximum value is $\sqrt{L} e^{-1/2} $.

Hence $\norm{\grad\pi(x)} \leq \sqrt{L} e^{-1/2} \lcr({\frac{L}{2\pi}})^{k/2}$
\end{proof}

\begin{corollary}\label{cor:ibps-gll}
Suppose that $\pi$ is a grad-log-Lipschitz probability density on $\RR^k$. Then, for any $f: \RR^k\to \RR$ which is locally Lipschitz, with $\grad f(X)$ and $f(x)\grad\log\pi(x)$ integrable (w.r.t. $\pi(x)dx$) we have:
\[
    \EEE{X\sim\pi} f(X)\grad\log\pi(X)  = -\EEE{X\sim\pi}\grad f(X)
\]
Similar formulas for the Jacobian and divergence also hold.
\end{corollary}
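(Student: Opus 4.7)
The plan is to reduce the corollary to Theorem~\ref{thm:ibps} by checking that a grad-log-Lipschitz density $\pi$ satisfies the hypotheses of that theorem. Specifically, Theorem~\ref{thm:ibps} requires three things: (i) $\pi$ continuous, (ii) $\pi$ differentiable $\Ll^k$-almost everywhere, and (iii) $\norm{\grad\pi}$ is $\Hh^{k-1}$-essentially bounded on $\Ll^1$-almost every level set of $\pi$. The integrability hypotheses on $f$ and on $f\grad\log\pi$ are already present in the statement of the corollary, so no additional work is needed there.

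First I would invoke Lemma~\ref{lem:lip-gll}, which gives a global Lipschitz constant $C = \sqrt{L}\,e^{-1/2}(L/2\pi)^{k/2}$ for $\pi$. Lipschitz continuity of $\pi$ immediately supplies both (i) continuity, and (ii) almost-everywhere differentiability via Rademacher's theorem, together with a uniform bound $\norm{\grad\pi(x)} \le C$ at every point where the gradient exists. This same uniform bound is much stronger than (iii): restricted to any level set $\pi^{-1}(\{r\})$, the function $\norm{\grad\pi}$ is bounded by $C$ $\Hh^{k-1}$-almost everywhere, so the essential supremum condition holds on \emph{every} level set, not merely almost every one.

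With the three hypotheses verified, Theorem~\ref{thm:ibps} applies verbatim and yields
\[
\EEE{X\sim\pi} f(X)\grad\log\pi(X) \;=\; -\EEE{X\sim\pi}\grad f(X).
\]
The analogous Jacobian and divergence formulas follow by applying this scalar identity componentwise, exactly as recorded in the remark immediately after Theorem~\ref{thm:ibps}: for $f:\RR^k\to\RR^m$ locally Lipschitz one applies the corollary to each of the $m$ coordinate functions of $f$ to assemble the Jacobian identity, and then takes traces to pass from the Jacobian identity to the divergence identity when $m=k$.

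There is no substantive obstacle here — the work has been absorbed into the preceding lemmas. The only minor point to note is that local Lipschitzness of $f$ already entails $\Ll^k$-a.e.\ differentiability of $f$ (again by Rademacher), so $\grad f$ is well-defined as a measurable function, matching the hypotheses of Theorem~\ref{thm:ibps}.
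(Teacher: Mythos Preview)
Your proposal is correct and mirrors the paper's own proof, which simply states that the corollary is ``just the combination of Theorem~\ref{thm:ibps} and Lemma~\ref{lem:lip-gll}.'' You have merely made explicit the verification of the hypotheses of Theorem~\ref{thm:ibps} (continuity, a.e.\ differentiability, and the level-set bound on $\norm{\grad\pi}$) that the paper leaves implicit.
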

\begin{proof}
This is just the combination Theorem \ref{thm:ibps} and Lemma \ref{lem:lip-gll}.
\end{proof}

\begin{lemma}\label{lem:gll-glp-var}
\[\EEE{X\sim\pi}[\grad\log\pi(X)] = 0\]
and 
\[\VVar{X\sim\pi}[\grad\log\pi(X)] = - \EEE{X\sim\pi}[\grad^2 \log\pi(X)] \]
\end{lemma}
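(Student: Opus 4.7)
The plan is to apply the grad-log-Lipschitz integration-by-parts formula from Corollary \ref{cor:ibps-gll} (and its Jacobian version) twice, choosing the test function $f$ cleverly each time. Both identities reduce to essentially one-line computations once we verify the integrability hypotheses of the IBP formula, which in turn follow from earlier parts of Proposition \ref{prop:summary-of-gll}.

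\textbf{First identity.} Apply Corollary \ref{cor:ibps-gll} with the constant function $f \equiv 1$. Then $\grad f \equiv 0$, so
\[
\EEE{X\sim\pi}[\grad\log\pi(X)] \;=\; \EEE{X\sim\pi} [f(X)\grad\log\pi(X)] \;=\; -\EEE{X\sim\pi}[\grad f(X)] \;=\; 0,
\]
provided $f(x)\grad\log\pi(x) = \grad\log\pi(x)$ is integrable under $\pi$. This integrability follows from part (g) of Proposition \ref{prop:summary-of-gll} (Lemma \ref{lem:gll-subg}): $\grad\log\pi(X)$ is sub-Gaussian under $X\sim\pi$ and hence has all polynomial moments, in particular a finite first moment.

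\textbf{Second identity.} Apply the Jacobian version of Corollary \ref{cor:ibps-gll} with the vector-valued test function $f(x) = \grad\log\pi(x)$, which is $L$-Lipschitz (hence locally Lipschitz) by assumption (A1), and whose Jacobian $Jf = \grad^2\log\pi$ exists $\Ll^k$-a.e.\ by part (a) of Proposition \ref{prop:summary-of-gll}. This yields
\[
\EEE{X\sim\pi}\bigl[\grad\log\pi(X)\,\grad\log\pi(X)'\bigr] \;=\; -\EEE{X\sim\pi}[\grad^2\log\pi(X)].
\]
Combining this with the first identity gives
\[
\VVar{X\sim\pi}[\grad\log\pi(X)] \;=\; \EEE{X\sim\pi}[\grad\log\pi(X)\grad\log\pi(X)'] - 0 \;=\; -\EEE{X\sim\pi}[\grad^2\log\pi(X)].
\]

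\textbf{Verification of integrability for step two.} We must check that both $Jf(X) = \grad^2\log\pi(X)$ and $f(X)\grad\log\pi(X)' = \grad\log\pi(X)\grad\log\pi(X)'$ are integrable under $\pi$. The former is bounded in operator norm by $L$ almost everywhere by part (a), so it is trivially integrable. The latter is integrable because $\grad\log\pi(X)$ is sub-Gaussian (part (g)) and therefore has a finite second moment matrix. No step here is a genuine obstacle; the entire content of the lemma is packaged into the broad applicability of Corollary \ref{cor:ibps-gll}, which is the substantive result that made this short derivation possible.
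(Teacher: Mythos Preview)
Your proof is correct and follows essentially the same route as the paper: apply Corollary \ref{cor:ibps-gll} with $f\equiv 1$ for the first identity and the Jacobian version with $f=\grad\log\pi$ for the second. The paper's own proof is a terse two-liner that omits the integrability checks you supply; your added verification via Lemma \ref{lem:gll-subg} and part (a) of Proposition \ref{prop:summary-of-gll} is a welcome bit of rigor (and introduces no circularity, since the proof of Lemma \ref{lem:gll-subg} relies only on Corollary \ref{cor:ibps-gll}, not on the present lemma).
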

\begin{proof}
This is an immediate consequence of the integration by parts form\-ula for $\pi$, since $\EE \grad\log\pi = \EE\grad 1 = 0$ and since (using the Jacobian version of the formula, and since the Jacobian of a gradient is a Hessian) 
\[
	\EE\grad\log\pi\grad\log\pi' = -\EE \grad^2\log\pi \ .
\]
\end{proof}

\begin{lemma}
\label{lem:gll-subg}
Let $\pi$ be a $C^2$ probability density on $\RR^k$ such that $\grad\log\pi$ is $L$-Lipshitz. If $X\sim\pi$ then $\grad\log\pi(X)$ is sub-Gaussian with proxy-variance $L$:
\[	\psi(t):= \EEE{X\sim\pi} \exp(\inner{t}{\grad\log\pi(X)}) \leq \exp(L \norm{t}^2 /2)
\]
\end{lemma}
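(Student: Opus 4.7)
The plan is to reduce the multivariate bound to a one-variable differential inequality along each ray in $\RR^k$, using the integration-by-parts identity from Corollary \ref{cor:ibps-gll} as the main tool. Fix a unit vector $v\in\RR^k$ and consider $F_v(s) := \EEE{X\sim\pi}\exp(s\langle v,\grad\log\pi(X)\rangle)$ for $s\in\RR$. Since $\psi(t) = F_{t/\norm{t}}(\norm{t})$ for $t\neq 0$ and $\psi(0)=F_v(0)=1$, it suffices to prove $F_v(s)\leq e^{Ls^2/2}$ for every unit $v$ and every real $s$.

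The heart of the argument is the following derivative computation. Differentiating in $s$ under the integral (justified below) gives $F_v'(s) = \EE[\langle v,\grad\log\pi(X)\rangle f(X)]$, where $f(x) := \exp(s\langle v,\grad\log\pi(x)\rangle)$. This $f$ is locally Lipschitz (the exponential of the $L$-Lipschitz map $x\mapsto s\langle v,\grad\log\pi(x)\rangle$), and by Rademacher's theorem (Proposition \ref{prop:summary-of-gll}(a)) its gradient
\[
\grad f(x) = s\, f(x)\, \grad^2\log\pi(x)\, v
\]
exists $\Ll^k$-a.e.\ and is measurable. Applying the vector form of Corollary \ref{cor:ibps-gll} coordinate-by-coordinate against the direction $v$,
\[
F_v'(s) = \EE\lcr[{\langle v,\grad\log\pi(X)\rangle f(X)}] = -\EE\lcr[{\langle v,\grad f(X)\rangle}] = -s\, \EE\lcr[{f(X)\, \langle v,\grad^2\log\pi(X) v\rangle}].
\]
The a.e.\ bound $\norm{\grad^2\log\pi}\leq L$ (Proposition \ref{prop:summary-of-gll}(a)) yields $\langle v,\grad^2\log\pi(X)v\rangle\geq -L$, hence for $s\geq 0$ one has the differential inequality $F_v'(s)\leq sL F_v(s)$, i.e.\ $(\log F_v)'(s)\leq sL$. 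Since $F_v(0)=1$, integrating from $0$ to $s$ yields $\log F_v(s)\leq Ls^2/2$; the case $s\leq 0$ is symmetric (the inequality flips sign along with $s$, so the integrated bound is the same).

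The main obstacle is to justify finiteness of $F_v$ and the integrability hypotheses of Corollary \ref{cor:ibps-gll}, since grad-log-Lipschitz does not by itself give any upper tail control on $\pi$ (e.g.\ Cauchy densities are grad-log-Lipschitz). I would resolve this by first bootstrapping moment bounds on $Y:=\langle v,\grad\log\pi(X)\rangle$ and then using Fatou. For the base step, apply the Jacobian form of Corollary \ref{cor:ibps-gll} to the bounded, locally Lipschitz regularized vector field $g_\epsilon(x) = \grad\log\pi(x)/(1+\epsilon\norm{\grad\log\pi(x)}^2)$: a direct computation shows its divergence is uniformly bounded in $\epsilon$ by a constant depending only on $k$ and $L$, so taking $\epsilon\downarrow 0$ by monotone convergence gives $\EE\norm{\grad\log\pi(X)}^2<\infty$. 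Inducting on $k$ with test function $p_k(x) := Y(x)^{2k-1}$ (locally Lipschitz, with $\grad p_k = (2k-1)Y^{2k-2}\grad^2\log\pi\cdot v$), Corollary \ref{cor:ibps-gll} gives
\[
m_{2k} := \EE Y^{2k} = -(2k-1)\EE\lcr[{Y^{2k-2}\langle v,\grad^2\log\pi(X) v\rangle}]\leq (2k-1)L\, m_{2k-2},
\]
so $m_{2k}\leq L^k (2k-1)!!$. These are exactly the even moments of $\Nn(0,L)$, and summing the power series term-by-term (all odd moments vanish by integration by parts, as in Lemma \ref{lem:gll-glp-var}) recovers $F_v(s)\leq \exp(Ls^2/2)$ directly. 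Equivalently, the moment bounds ensure $F_v(s)<\infty$ for all $s\in\RR$ and validate dominated convergence for differentiation under the integral as well as the integrability conditions on $f$ and $\grad f$ needed to apply Corollary \ref{cor:ibps-gll}, so the clean differential-inequality argument of the previous paragraph then runs rigorously and delivers the claimed sub-Gaussian bound.
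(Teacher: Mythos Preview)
Your approach mirrors the paper's almost exactly: both proofs first establish a moment recursion via the integration-by-parts identity (Corollary \ref{cor:ibps-gll}) to guarantee the MGF is finite, and then apply integration by parts once more to the MGF itself to obtain the differential inequality $(\log\psi)'\leq L\norm{t}$, which integrates to the claimed bound. The only structural difference is that the paper bounds the even moments of the full norm $\norm{\grad\log\pi(X)}$ (obtaining the cruder $\mu_{2r}\leq r!\,[(k+2)L]^r$ and invoking a general sub-Gaussian moment criterion), whereas you work directly with the scalar projection $Y=\langle v,\grad\log\pi(X)\rangle$ and get the sharper $m_{2k}\leq L^k(2k-1)!!$.

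One genuine slip: you assert that ``all odd moments vanish by integration by parts, as in Lemma \ref{lem:gll-glp-var}.'' That lemma only gives $\EE Y=0$; for higher odd orders the same identity yields $\EE Y^{2k+1}=-2k\,\EE[Y^{2k-1}\langle v,\grad^2\log\pi(X)v\rangle]$, which has no reason to vanish when $\pi$ is not symmetric. So your ``direct power-series summation'' route does not close as stated. This does not damage the overall argument, since your even-moment bound already gives $\EE e^{sY}\leq 2\EE\cosh(sY)<\infty$, which is all you need to justify differentiation under the integral and the integrability hypotheses of Corollary \ref{cor:ibps-gll}; the differential-inequality route then runs exactly as you (and the paper) wrote it.
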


\begin{proof}
Since $\grad\log\pi$ is $L$-Lipshitz, then we must have that $\norm{\grad^2 \log\pi(x)}\leq L $ (and hence $\abs{\laplace \log\pi(X)}\leq kL$ as well) for all $x\in\RR^k$. Let $\mu_n$ denote the $n$th moment of $\norm{\grad\log\pi(X)}$.
\[
	\EEE{X\sim\pi}\lcr[{\norm{\grad\log\pi(X)}^{2}}]
		& = \EEE{X\sim\pi}\lcr[{\grad\log\pi(X) ' \grad\log\pi(X)}]\\
		& = -\EEE{X\sim\pi}\lcr[{\laplace\log\pi(X)}]\\
		& \leq kL		
\]
For $r\geq 2$,
\[
	\mu_{2r} 
		& = \EEE{X\sim\pi}\lcr[{\norm{\grad\log\pi(X)}^{2r}}]\\
		& = \EEE{X\sim\pi}\lcr[{\lcr({\grad\log\pi(X) ' \grad\log\pi(X)})^{r}}]\\
		& = \EEE{X\sim\pi}\lcr[{\lcr({\grad\log\pi(X) ' \grad\log\pi(X)})\lcr({\grad\log\pi(X) ' \grad\log\pi(X)})^{r-1}}]
\]
Using Corollary \ref{cor:ibps-gll},
\[
	&\mu_{2r}		\\
		& = -\EEE{X\sim\pi}\lcr[{\grad\log\pi(X)'\ 2(r-1) \norm{\grad\log\pi(X)}^{2(r-2)} \grad^2\log\pi(X) \grad\log\pi(X)}]\\
		& \qquad - \EEE{X\sim\pi}\lcr[{\lcr({\laplace\log\pi(X)})\norm{\grad\log\pi(X)}^{2(r-1)}}]\\	
		& \leq (2 L (r-1) + kL) \EEE{X\sim\pi}\lcr[{\norm{\grad\log\pi(X)}^{2(r-1)}}]\\
		& = (2 L (r-1) + kL) \mu_{2(r-1)} \\ 
		& \leq (k+2) L r \mu_{2(r-1)} 	 			
\]
Thus $\mu_{2r} \leq r![(k+2) L]^r $. Hence, from \citep{boucheron2013concentration} (Theorem 2.1 therein), we get that $\norm{\grad\log\pi(X)}$ is subgaussian with proxy variance $4 (k+2) L$. Thus we know that $\grad\log\pi(X)$ must also be a subgaussian vector, with proxy variance no larger than $4 (k+2) L$. Now, fix $t\in\RR^k$. The moment generating function of $\grad\log\pi(X)$ is finite everywhere and is given by:
\[
 \psi(t) := \EEE{X\sim\pi} \lcr[{\exp(\inner{t}{\grad\log\pi(X)})}]
\]

Then, letting $B_t$ be the ball of radius $\norm t$ centred at the origin,
\[
	\grad_t \psi(t) 
		& = \grad_t \EEE{X\sim\pi} \lcr[{\exp(\inner{t}{\grad\log\pi(X)})}] \\
		& = \EEE{X\sim\pi} \lcr[{ \grad_t\exp(\inner{t}{\grad\log\pi(X)})}] \\
		& = \EEE{X\sim\pi} \lcr[{ \grad\log\pi(X) \exp(\inner{t}{\grad\log\pi(X)})}] \\
\]
Using integration by parts,
\[
& \hspace{-2em}
\EEE{X\sim\pi} \lcr[{ \grad\log\pi(X) \exp(\inner{t}{\grad\log\pi(X)})}] \\
		& = \EEE{X\sim\pi} \lcr[{(\grad^2\log\pi(X)t)\exp(\inner{t}{\grad\log\pi(X)})}] \\
		& \in L\psi(t) B_t				
\]

Thus:
\[
	&&\grad_t\log\psi(t) &\in L B_t \\	
	&\implies&\log\psi(t) &\leq L\norm{t}^2/2 \\
	&\implies&\psi(t) &\leq \exp(L\norm{t}^2/2)
\]
\end{proof}
\begin{remark}
\label{rem:gll-moments}
Consequently all the moments of $\grad\log\pi$ exist. Moreover, since $\norm{\grad^2\log\pi }\leq L$, all the moments of $\grad^2\log\pi $ must exist as well. This means that the assumptions in \citep{roberts1997weak}, \citep{neal2006optimal}, \citep{bedard2007weak}, etc. that $\EE\lcr({\frac{\pi'}{\pi}})^8<\infty$ (or similar moment conditions) and $\EE\lcr({\frac{\pi''}{\pi}})^4<\infty$ are redundant once $\frac{\pi'}{\pi}$ is assumed to be Lipshitz.
\end{remark}
\newpage
\bibliographystyle{plainnat}
\bibliography{bib-files/blockiid_main}

\newpage
\appendix
\section{Additional Lemmas for the Proof of Weak Convergence}
%!TEX root = ../blockiid_main.tex
\setlength{\parindent}{0pt}
\setlength{\parskip}{6pt}
\begin{lemma}[Gaussian increments of a function approximate derivatives]\label{lem-nr06-a2}
For any $h\in C_c^\infty$, 
\[
  	\scriptstyle \sup_{x\in F_d} \abs{d \EE_{Z} [h(x^{(1):(r)}+Z^{(1):(r)})-h(x^{(1):(r)})] - \frac{l^2}{2} [I_r\otimes \Lambda]: \grad^2 h(x^{(1):(r)})} 
  		& \myram{d\to \infty} 0
\]
and 
\[
	& \scriptstyle  \sup_{x\in F_d}\sqrt{\log d}\lcr\Vert{d \EE_{Z} [Z^{(1):(r)}(h(x^{(1):(r)}+Z^{(1):(r)})-h(x^{(1):(r)}))]}.\\
	&\qquad\qquad\qquad\qquad \scriptstyle\lcr.{ - l^2 [I_r\otimes \Lambda] \grad h(x^{(1):(r)})}\Vert \\
		& \qquad\qquad \myram{d\to \infty} 0
\]
\end{lemma}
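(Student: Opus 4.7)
The plan is to apply Taylor's theorem to $h(x^{(1):(r)}+Z^{(1):(r)})$ around $x^{(1):(r)}$ and then integrate against $Z^{(1):(r)}\sim\Nn(0,\Sigma_d)$ with $\Sigma_d:=\frac{l^2}{d-1}(I_r\otimes\Lambda)$, whose covariance shrinks at rate $1/(d-1)$. Since $h\in C^\infty_c(\RR^{rk})$, every derivative of $h$ is uniformly bounded on $\RR^{rk}$, so the bounds we obtain depend only on $\|\grad^j h\|_\infty$ and Gaussian moments. In particular the ``$\sup$ over $F_d$'' plays no role in this lemma (unlike in Lemma \ref{lem-rn06-a3}): the convergence is in fact uniform in $x\in\RR^{kd}$. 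The only Gaussian moment facts needed are $\EE[Z Z']=\Sigma_d$, $\EE[Z(Z'AZ)]=0$ for symmetric $A$ (odd moment of a centred Gaussian), and $\EE\|Z\|^m=O(d^{-m/2})$ for each $m\in\NN$.

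For the first statement, I would expand to second order, writing $x$ for $x^{(1):(r)}$ and $Z$ for $Z^{(1):(r)}$:
\[
h(x+Z) - h(x) = \grad h(x)' Z + \tfrac12 Z' \grad^2 h(x)\, Z + R_3(x,Z),
\]
with $|R_3(x,Z)|\le \tfrac16\|\grad^3 h\|_\infty\|Z\|^3$. The linear term has mean zero; the quadratic term integrates to $\tfrac12 \Sigma_d : \grad^2 h(x) = \tfrac{l^2}{2(d-1)}(I_r\otimes\Lambda):\grad^2 h(x)$; and the remainder satisfies $|\EE R_3|=O(d^{-3/2})$. Multiplying by $d$ and using $\tfrac{d}{d-1}\to 1$ yields $\tfrac{l^2}{2}(I_r\otimes\Lambda):\grad^2 h(x) + O(d^{-1/2})$, uniformly in $x$.

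For the second statement, I would use the same expansion and multiply through by $Z$. Then $\EE[Z Z']\grad h(x)=\Sigma_d \grad h(x)$ handles the linear contribution; $\EE[Z (Z'\grad^2 h(x) Z)]=0$ kills the quadratic contribution by the odd-moment identity; and $\|\EE[Z R_3(x,Z)]\|\le\tfrac16\|\grad^3 h\|_\infty\,\EE\|Z\|^4=O(d^{-2})$. Multiplying by $d$ gives $\tfrac{d}{d-1}l^2(I_r\otimes\Lambda)\grad h(x)+O(d^{-1})$; the prefactor slack $(\tfrac{d}{d-1}-1)l^2(I_r\otimes\Lambda)\grad h(x)$ is also $O(d^{-1})$ because $\grad h$ is uniformly bounded. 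Multiplying by the extra $\sqrt{\log d}$ still leaves a bound of order $\sqrt{\log d}/d\to 0$.

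The only thing to watch is the bookkeeping: one must check that after the $d$ prefactor (and, in the second claim, the additional $\sqrt{\log d}$) every Taylor remainder and every $\tfrac{d}{d-1}-1$ slack still vanishes at the appropriate rate. There is no genuine analytic obstacle here; compact support of $h$ is what makes every error bound uniform over $\RR^{kd}$ and a fortiori over $F_d$, and the key structural observation is just that the odd Gaussian moment annihilates the quadratic term in the second claim so that the leading error is $O(d^{-1})$ rather than $O(d^{-1/2})$.
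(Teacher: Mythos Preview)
Your proposal is correct and follows essentially the same approach as the paper's proof: Taylor expand $h$ to second order with Lagrange remainder, use that $h\in C_c^\infty$ so all derivatives are uniformly bounded, and compute the relevant Gaussian moments. Your write-up is in fact more detailed than the paper's (which simply says ``the lemma follows directly by substitution and evaluation''), and your observation that the $\sup_{x\in F_d}$ plays no role here---the bounds are uniform over all of $\RR^{kd}$---is a useful clarification.
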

\begin{proof} for some random variable, $\eta$,
\[
&\hspace{-2em} h(x^{(1):(r)}+Z^{(1):(r)})-h(x^{(1):(r)})\\
  & = (\grad h(x^{(1):(r)}))' Z^{(1):(r)} + \frac{1}{2} {Z^{(1):(r)}}'\grad^2 h(x^{(1):(r)}) Z^{(1):(r)} \\
  &\qquad\qquad +\sum_{\abs{\alpha}=3} \frac{1}{\alpha!} D^{\alpha} h(x^{(1):(r)}+\eta Z^{(1):(r)}) (Z^{(1):(r)})^\alpha  
\]
Thus the lemma follows directly by substitution and evaluation, since the third order partials of $h$ must be uniformly bounded,  and since the third moments of $Z^{(1):(r)}$ are $O(d^{-3/2})$.
\end{proof}

\begin{proposition}[Lipschitz Acceptance (\citep{roberts1997weak}, proposition 2.3)]\label{prop-Lip-Acc}
The function $g(x) = 1\wedge e^x$ is $1$-Lipschitz.
\end{proposition}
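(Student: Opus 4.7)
The plan is to observe that $g(x) = 1 \wedge e^x$ is continuous everywhere (the two pieces agree at $x=0$ since $e^0 = 1$) and piecewise $C^1$ with derivative bounded by $1$ wherever differentiable, and then conclude by a standard argument from derivative bounds to the Lipschitz constant.

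Concretely, I would first write $g$ in its piecewise form:
\[
  g(x) = \begin{cases} e^x & x \leq 0 \\ 1 & x \geq 0 \end{cases}
\]
At any $x < 0$, $g'(x) = e^x \in (0,1)$, and at any $x > 0$, $g'(x) = 0$. Hence $|g'(x)| \leq 1$ Lebesgue-almost everywhere.

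The cleanest finish is to invoke the fact that an absolutely continuous function (which $g$ is, being continuous and piecewise $C^1$) with $|g'| \leq 1$ almost everywhere is $1$-Lipschitz, via $|g(x) - g(y)| = |\int_y^x g'(t) \, dt| \leq |x-y|$. Alternatively, a three-case argument works with no calculus beyond the mean value theorem: if $x,y \geq 0$ both land on the flat part and $|g(x)-g(y)|=0$; if $x,y \leq 0$, then $|e^x - e^y| \leq |x-y|$ since $\exp$ has derivative $\leq 1$ on $(-\infty,0]$; and if $x < 0 \leq y$, then $|g(x)-g(y)| = 1 - e^x \leq -x \leq y - x$, where the inequality $1 - e^x \leq -x$ is the standard bound $e^x \geq 1 + x$ for all real $x$.

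There is no real obstacle here; the only thing worth being careful about is the mixed case $x < 0 \leq y$, where one must remember the elementary inequality $e^x \geq 1 + x$ (equivalently, that $\exp$ lies above its tangent line at $0$) to get the bound. Everything else is immediate.
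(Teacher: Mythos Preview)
Your proof is correct and entirely standard. Note that the paper does not actually prove this proposition; it merely states it with a citation to \citep{roberts1997weak}, so there is no ``paper's own proof'' to compare against. Your argument---bounding $|g'|\leq 1$ almost everywhere and concluding via absolute continuity (or, equivalently, the three-case split handling the mixed case with $e^x\geq 1+x$)---is exactly the elementary verification one would expect, and nothing more is needed.
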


\begin{proposition}[Acceptance Moments (\citep{roberts1997weak}, proposition 2.4)]\label{prop-censored-LN-mean}
If $W\sim\Nn(\mu,\sigma^2)$ then 
\[
\EE[1\wedge e^W] = \Phi(\mu/\sigma) + e^{\mu+\sigma^2/2}\Phi(-\sigma - \mu/\sigma)
\]
and
\[
\EE[e^W; W<0] = e^{\mu+\sigma^2/2}\Phi(-\sigma - \mu/\sigma)
\]
\end{proposition}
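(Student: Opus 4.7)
The plan is to prove both identities by a single calculation: a pointwise decomposition of $1\wedge e^W$ into its "acceptance" and "truncated" pieces, followed by a Gaussian integral evaluated via completing the square.

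First, I would observe the pointwise identity
\[
1\wedge e^W \;=\; \Ind_{W\geq 0} \;+\; e^W \Ind_{W<0}\,,
\]
since $e^W\geq 1$ exactly when $W\geq 0$. Taking expectations and using that $W/\sigma - \mu/\sigma \sim \Nn(0,1)$ gives $\EE[\Ind_{W\geq 0}] = \PP(W\geq 0) = \Phi(\mu/\sigma)$. This already reduces the first claim to the second: once $\EE[e^W;W<0]$ is computed, the first identity follows by adding $\Phi(\mu/\sigma)$.

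Next I would evaluate $\EE[e^W;W<0]$ directly by writing
\[
\EE[e^W;W<0] \;=\; \int_{-\infty}^{0} e^{w}\,\frac{1}{\sigma\sqrt{2\pi}}\,\exp\!\lcr({-\frac{(w-\mu)^2}{2\sigma^2}})\, dw.
\]
The exponent combines as
\[
w - \frac{(w-\mu)^2}{2\sigma^2} \;=\; \mu + \frac{\sigma^2}{2} \;-\; \frac{(w - (\mu+\sigma^2))^2}{2\sigma^2},
\]
so pulling the constant $e^{\mu+\sigma^2/2}$ out of the integral leaves the density of $\Nn(\mu+\sigma^2, \sigma^2)$ integrated over $(-\infty, 0)$. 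That integral equals $\Phi\!\lcr({-(\mu+\sigma^2)/\sigma}) = \Phi(-\sigma - \mu/\sigma)$, yielding the second claim. Combining with the first step gives the first claim.

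There is no real obstacle here: the only slightly delicate step is the completion of the square, which is a standard exponential tilting identity (the same computation that gives the Gaussian MGF as $\EE[e^W] = e^{\mu+\sigma^2/2}$; what we are doing is just the truncated analogue). No additional assumptions beyond $W\sim\Nn(\mu,\sigma^2)$ are needed, and Fubini/Tonelli is not required because the integrand is nonnegative and the unrestricted integral is finite.
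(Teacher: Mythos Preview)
Your proof is correct. The paper does not give its own proof of this proposition; it is simply cited from \citep{roberts1997weak} (Proposition 2.4 therein), so your decomposition $1\wedge e^W = \Ind_{W\geq 0} + e^W\Ind_{W<0}$ followed by completing the square is precisely the standard argument and there is nothing to compare against.
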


\begin{lemma}\label{lem-Tr-Err-bd}
Let 
\[
	W_d (x) 
		& = \frac{1}{2} \sum_{i = r+1}^d \scriptstyle \lcr[{ {Z^{(i)}} ' [\grad^2 \log \pi(x^{(i)})] Z^{(i)} + \frac{l^2}{(d-1)} (\grad \log\pi(x^{(i)}))' \Lambda (\grad \log\pi(x^{(i)}))}]\\
		& = \frac{1}{2} \sum_{i =  r+1}^d \scriptstyle \lcr[{ [\grad^2 \log \pi(x^{(i)})] : [Z^{(i)}{Z^{(i)}} '] + \frac{l^2}{(d-1)} (\grad \log\pi(x^{(i)}))' \Lambda (\grad \log\pi(x^{(i)}))}]
\]
where $Z^{(i)} \mysim{iid} \Nn\lcr({0,\frac{l^2\Lambda}{(d-1)}})$.

Then $\lim_{d\to\infty} \sup_{x^{(1):(d)}\in F_d} \EE\abs{W_d(x)} = 0$
\end{lemma}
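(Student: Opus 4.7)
The plan is to apply the elementary bound
\[
	\EE\abs{W_d(x)} \leq \abs{\EE W_d(x)} + \sqrt{\Var W_d(x)}
\]
(which follows from Jensen applied to the centered quantity) and then show that both the mean and the variance of $W_d(x)$ vanish uniformly on $F_d$. Throughout I will use that on $F_d \subset F_0^d$ the Hessian $\grad^2\log\pi(x^{(i)})$ exists and satisfies $\norm{\grad^2\log\pi(x^{(i)})}_{\mathrm{op}} \leq L$, via Proposition \ref{prop:summary-of-gll}(a).

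First, compute $\EE W_d(x)$ by linearity. Since $Z^{(i)} \mysim{iid} \Nn(0, \frac{l^2\Lambda}{d-1})$,
\[
	\EE\lcr[{[\grad^2\log\pi(x^{(i)})] : Z^{(i)}{Z^{(i)}}'}] = \tfrac{l^2}{d-1}\,\Lambda : \grad^2\log\pi(x^{(i)}).
\]
Substituting into $W_d(x)$ and recalling the definitions of $R_d$ and $S_d$ from Subsection \ref{subsec:large-sets} yields
\[
	\EE W_d(x) = \tfrac{l^2}{2}\lcr({R_d(x^{(r+1):(d)}) - S_d(x^{(r+1):(d)})}).
\]
On $F_d$ each of $R_d$ and $S_d$ lies within $d^{-1/8}+\frac{J(r-1)}{d-1}$ of $J$, so by the triangle inequality
\[
	\sup_{x\in F_d}\abs{\EE W_d(x)} \leq l^2\lcr({d^{-1/8}+\tfrac{J(r-1)}{d-1}}) \xrightarrow{d\to\infty} 0.
\]

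Second, bound the variance. The terms indexed by $i \in \{r+1,\ldots,d\}$ are independent because the $Z^{(i)}$ are. The deterministic summand $\frac{l^2}{d-1}(\grad\log\pi(x^{(i)}))'\Lambda\,\grad\log\pi(x^{(i)})$ contributes nothing to the variance, and by the standard identity $\Var(Z'AZ) = 2\tr((AC)^2)$ for $Z\sim\Nn(0,C)$ (a consequence of Isserlis' theorem, already invoked in the proof of Lemma \ref{lem-rn06-a3}), each surviving contribution equals
\[
	\tfrac{1}{4}\cdot \tfrac{2 l^4}{(d-1)^2}\tr\lcr({\lcr({\grad^2\log\pi(x^{(i)})\,\Lambda})^2}).
\]
Using $\tr((AB)^2) \leq \norm{A}_F^2 \norm{B}_{\mathrm{op}}^2$ and $\norm{\grad^2\log\pi(x^{(i)})}_F^2 \leq k L^2$, summing over $i$ gives
\[
	\sup_{x \in F_d}\Var W_d(x) \leq \tfrac{l^4 k L^2 \norm{\Lambda}_{\mathrm{op}}^2}{2(d-1)} = O(1/d).
\]

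Combining these two bounds, $\sup_{x\in F_d}\EE\abs{W_d(x)} \leq l^2(d^{-1/8}+\frac{J(r-1)}{d-1}) + O(d^{-1/2}) \to 0$, which is the claim. The only subtle point is the almost-everywhere nature of $\grad^2\log\pi$, but this is harmless because $F_d \subset F_0^d$ restricts to exactly those $x$ at which the Hessian is defined; there is no real technical obstacle in this lemma, only the bookkeeping for the Gaussian quadratic form variance.
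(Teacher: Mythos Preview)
Your proof is correct and follows essentially the same approach as the paper's: both bound $\EE|W_d|$ via the second moment, identify the mean as $\tfrac{l^2}{2}(R_d - S_d)$ and control it using the $F_{1,d}$ and $F_{2,d}$ constraints, and bound the variance of the Gaussian quadratic forms via Isserlis' theorem together with the Lipschitz bound $\norm{\grad^2\log\pi}\leq L$. The only cosmetic difference is that you split $\EE|W_d|\leq |\EE W_d| + \sqrt{\Var W_d}$ explicitly, whereas the paper works directly with $(\EE|W_d|)^2 \leq \EE[W_d^2] = (\EE W_d)^2 + \Var W_d$; the content is the same.
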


\begin{proof}
Using Isserlis' theorem (\citep{isserlis1916on}, equation (39)*):
\[
& 	\EE\lcr[{([\grad^2 \log \pi(x^{(i)})] : [Z^{(i)}{Z^{(i)}} '])^2}] \\
		& = \EE\lcr[{\lcr({\sum_{\alpha,\beta} \frac{\partial^2 \log\pi (x^{(i)})}{\partial x^{(i)}_\alpha \partial x^{(i)}_\beta}Z^{(i)}_\alpha Z^{(i)}_\beta})^2  }] \\
%		& = \EE\lcr[{\sum_{\alpha,\beta,\gamma,\delta} \frac{\partial^2 \log\pi (x^{(i)})}{\partial x^{(i)}_\alpha \partial x^{(i)}_\beta} \frac{\partial^2 \log\pi (x^{(i)})}{\partial x^{(i)}_\gamma \partial x^{(i)}_\delta} Z^{(i)}_\alpha Z^{(i)}_\beta Z^{(i)}_\gamma Z^{(i)}_\delta  }]\\
		& =\sum_{\alpha,\beta,\gamma,\delta} \frac{\partial^2 \log\pi (x^{(i)})}{\partial x^{(i)}_\alpha \partial x^{(i)}_\beta} \frac{\partial^2 \log\pi (x^{(i)})}{\partial x^{(i)}_\gamma \partial x^{(i)}_\delta}  \EE\lcr[{Z^{(i)}_\alpha Z^{(i)}_\beta Z^{(i)}_\gamma Z^{(i)}_\delta  }]\\
		& =\frac{l^4}{(d-1)^2} \sum_{\alpha,\beta,\gamma,\delta} \scriptstyle \frac{\partial^2 \log\pi (x^{(i)})}{\partial x^{(i)}_\alpha \partial x^{(i)}_\beta} \frac{\partial^2 \log\pi (x^{(i)})}{\partial x^{(i)}_\gamma \partial x^{(i)}_\delta}  \lcr({\Lambda_{\alpha\beta}\Lambda_{\gamma\delta} +\Lambda_{\alpha\gamma}\Lambda_{\beta\delta}+\Lambda_{\alpha\delta}\Lambda_{\beta\gamma}})\\
		& =\frac{l^4}{(d-1)^2}\lcr[{ \lcr({\sum_{\alpha,\beta} {\scriptstyle \frac{\partial^2 \log\pi (x^{(i)})}{\partial x^{(i)}_\alpha \partial x^{(i)}_\beta} \Lambda_{\alpha\beta}}})^2 + 2\sum_{\alpha,\beta,\gamma,\delta} {\scriptstyle \frac{\partial^2 \log\pi (x^{(i)})}{\partial x^{(i)}_\alpha \partial x^{(i)}_\beta} \frac{\partial^2 \log\pi (x^{(i)})}{\partial x^{(i)}_\gamma \partial x^{(i)}_\delta}\Lambda_{\alpha\gamma}\Lambda_{\beta\delta}}}]\\
		& =\frac{l^4}{(d-1)^2} \lcr[{ [\Lambda : \grad^2\log\pi(x^{(i)})]^2 + 2 \sum_{\alpha,\beta,\gamma,\delta} {\scriptstyle \frac{\partial^2 \log\pi (x^{(i)})}{\partial x^{(i)}_\alpha \partial x^{(i)}_\beta} \frac{\partial^2 \log\pi (x^{(i)})}{\partial x^{(i)}_\gamma \partial x^{(i)}_\delta}\Lambda_{\alpha\gamma}\Lambda_{\beta\delta}}}]\\		
\]
where all Greek subscripts range over $\set{1,...,k}$.

Thus we have:
\[
	&\lcr[{\EE\abs{W_d(x)}}]^2\\
		& \leq \EE\lcr[{\abs{W_d(x)}^2}]\\
		& = \frac{l^4}{4(d-1)^2} \lcr({\sum_{i=r+1}^d {\scriptstyle \lcr({[\Lambda : \grad^2\log\pi(x^{(i)})] + (\grad \log\pi(x^{(i)}))' \Lambda (\grad \log\pi(x^{(i)}))  })}})^2\\
		& \qquad + \frac{2 l^4}{4(d-1)^2} \sum_{i=r+1}^d\sum_{\alpha,\beta,\gamma,\delta} \frac{\partial^2 \log\pi (x^{(i)})}{\partial x^{(i)}_\alpha \partial x^{(i)}_\beta} \frac{\partial^2 \log\pi (x^{(i)})}{\partial x^{(i)}_\gamma \partial x^{(i)}_\delta}\Lambda_{\alpha\gamma}\Lambda_{\beta\delta}
\]

For $x^{(1):(d)} \in F_d$ we have: 
\[
\frac{1}{2(d-1)}\abs {\sum_{i=r+1}^d {\scriptstyle \lcr({[\Lambda : \grad^2\log\pi(x^{(i)})] + (\grad \log\pi(x^{(i)}))' \Lambda (\grad \log\pi(x^{(i)}))  })}} \leq d^{-1/8}
\]

Since $\grad\log\pi$ is Lipschitz, the second order partials of $\log\pi$ are essentially bounded, 

hence $\sum_{\alpha,\beta,\gamma,\delta} \frac{\partial^2 \log\pi (x^{(i)})}{\partial x^{(i)}_\alpha \partial x^{(i)}_\beta} \frac{\partial^2 \log\pi (x^{(i)})}{\partial x^{(i)}_\gamma \partial x^{(i)}_\delta}\Lambda_{\alpha\gamma}\Lambda_{\beta\delta}$ is essentially bounded. Thus:
\[
\sup_{x\in\RR^{kd}} \frac{2 l^2}{4(d-1)^2} \sum_{i=r+1}^d\sum_{\alpha,\beta,\gamma,\delta} \frac{\partial^2 \log\pi (x^{(i)})}{\partial x^{(i)}_\alpha \partial x^{(i)}_\beta} \frac{\partial^2 \log\pi (x^{(i)})}{\partial x^{(i)}_\gamma \partial x^{(i)}_\delta}\Lambda_{\alpha\gamma}\Lambda_{\beta\delta} \in O(1/d)
\]
Combining these two limits we get that $\lim_{d\to\infty} \sup_{x\in F_d} \EE\abs{W_d(x)} = 0$.
\end{proof}
\newpage

\end{document}